\definecolor{crimson}{rgb}{0.86, 0.08, 0.24}
\definecolor{bleudefrance}{rgb}{0.19, 0.5, 0.91}
\theoremstyle{plain}
\numberwithin{equation}{section} 
\newtheorem{theorem}{Theorem}[section]
\newtheorem{proposition}[theorem]{Proposition}
\newtheorem{lemma}[theorem]{Lemma}
\newtheorem{corollary}[theorem]{Corollary}
\theoremstyle{remark}
\newtheorem{remark}[theorem]{Remark}
\renewcommand{\leq}{\leqslant}
\renewcommand{\geq}{\geqslant}
\newsavebox{\proofbox}
\savebox{\proofbox}{\begin{picture}(7,7)  \put(0,0){\framebox(7,7){}}\end{picture}}
\newcommand\E{\mathbb{E}}
\newcommand\Z{\mathbb{Z}}
\newcommand\R{\mathbb{R}}
\newcommand\p{\mathbb{P}}
\newcommand\C{\mathbb{C}}
\newcommand\N{\mathbb{N}}
\newcommand\kk{\mathrm{k}}
\newcommand\M{\operatorname{M}}
\newcommand\SL{\operatorname{SL}}
\newcommand\inte{\operatorname{int}}
\newcommand\GL{\operatorname{GL}}
\newcommand\Mat{\operatorname{Mat}}
\newcommand\diam{\operatorname{diam}}
\newcommand\Isom{\operatorname{Isom}}
\newcommand\Lip{\operatorname{Lip}}
\newcommand\lazy{\operatorname{lazy}}
\newcommand\supp{\operatorname{supp}}
\newcommand\Q{\mathbb{Q}}
\newcommand\id{\operatorname{id}}
\newcommand{\efface}[1]{}
\begin{document}
 
\title[Concentration inequalities for random walks on hyperbolic spaces]{Random walks on hyperbolic spaces: Concentration inequalities and  probabilistic Tits alternative}

\author{Richard Aoun}
\address{American University of Beirut, Department of Mathematics, Faculty of Arts and Sciences, P.O. Box 11-0236 Riad El Solh, Beirut 1107 2020, Lebanon (on leave in New York University Abu Dhabi, PO Box 129188, Saadiyat Island, Abu Dhabi, United Arab Emirates)}
\email{ra279@aub.edu.lb}
\thanks{The first author is  supported  by a Research Group Linkage Programme from the Humboldt Foundation}

\author{Cagri Sert}
\address{Institut f\"{u}r Mathematik, Universit\"{a}t Z\"{u}rich, 190, Winterthurerstrasse, 8057 Z\"{u}rich, Switzerland}
\email{cagri.sert@math.uzh.ch}
\thanks{The second author is supported by SNF Ambizione grant 193481}

\subjclass[2010]{Primary 60F10; Secondary 20F67,05C81}
\keywords{Concentration estimates, hyperbolic spaces, random walks, drift, non-amenability, Tits alternative}

\maketitle

\begin{abstract} 
The goal of this article is two-fold: in a first part, we prove Azuma--Hoeffding type concentration inequalities around the drift for the displacement of non-elementary random walks on hyperbolic spaces. For a proper hyperbolic space $M$, we obtain explicit bounds that depend only on $M$, the size of support of the measure as in the classical case of sums of independent random variables, and on the norm of the driving probability measure in the left regular representation of the group of isometries. We obtain uniform bounds in the case of hyperbolic groups and effective bounds for simple linear groups of rank-one. In a second part, using our concentration inequalities, we give quantitative finite-time estimates on the probability that two independent random walks on the isometry group of a hyperbolic space generate a free non-abelian subgroup. Our concentration results follow from a more general, but less explicit statement that we prove for cocycles which satisfy a certain cohomological equation. For example, this also allows us to obtain subgaussian concentration bounds around the top Lyapunov exponent of random matrix products in arbitrary dimension. 
\end{abstract}

\setcounter{tocdepth}{1}
 
{
  \hypersetup{linkcolor=black}
  \tableofcontents
}

\section{Introduction}

Let $(M,d)$ be a metric space and $\Isom(M)$ the group of isometries of $M$. Consider a finitely supported probability measure $\mu$ on $\Isom(M)$, let $(X_i)_{i \in \N}$ be a sequence of independent random variables with distribution $\mu$ and denote by $R_n$ the random variable given by the product $X_1 \ldots X_n$. Fix a basepoint $o \in M$ and consider the random walk  $R_no$ on $M$. A straightforward application of Kingman's subadditive ergodic theorem shows that there exists a constant $\ell(\mu) \geq 0$, called the drift of the random walk,  such that
\begin{equation}\label{eq.kingman}
\frac{1}{n} d(R_n o, o) \underset{n \to \infty}{\overset{a.s.}{\longrightarrow}} \ell(\mu).
\end{equation}
This can be seen as a generalization of the classical law of large numbers which corresponds to the case $M=\R$ and $\mu$ supported on the translations $\R<\Isom(\R)$. 


Understanding various aspects of the convergence \eqref{eq.kingman} (e.g.\ central limit theorem (CLT), large deviation principles (LDP), Azuma-Hoeffding-type concentration inequalities) in the aforementioned special case constitutes a fundamental part of classical probability theory. Various other cases have attracted considerable attention relatively more recently: starting in '60s with the work of Furstenberg, Kesten, Oseledets, Kaimanovich \cite{furstenberg.poisson,furstenberg-kesten,kaimanovich.oseledets, oseledets} for symmetric spaces of non-compact type and with Dynkin--Malyutov \cite{dynkin-malyutov}, Furstenberg \cite{Furstenberg.boundary}, Kaimanovich--Vershik \cite{kaimanovich-vershik} and others for  random walks on countable groups. More recently, for general metric spaces with an assumption of coarse negative-curvature (namely Gromov hyperbolicity), a number of analogues of the classical results were proven including CLT's \cite{BQ.CLT.hyperbolic, mathieu-sisto}, local limit theorems \cite{gouezel.local}, and closer to our considerations, LDP's and exponential decay results \cite{BMSS, gouezel.cts}. Our goal in this paper is to establish Hoeffding-type concentration inequalities in the general setting of random walks on hyperbolic spaces. To the best of our knowledge, this aspect of the classical theory is far less developed in our setting.

Concentration inequalities around the mean $\ell(\mu)$ have two distinctive features compared to asymptotic large deviations estimates: on the one hand, these are large deviation bounds for the fluctuations of the distance of the random walk that are valid \textit{uniformly over all times} as opposed to asymptotic estimates. On the other hand, the exponential decay rate is expressed \textit{as an explicit function of the normalized deviation distance $t$.}
As such, these inequalities have been useful in the classical case both from a pure mathematics and applied or computational perspectives. Accordingly, one of the main reasons that we mostly focus our attention in this article to proper Gromov hyperbolic spaces is that, by following a geometric and harmonic analytic technique of Benoist--Quint \cite{BQ.CLT.hyperbolic}, we are able to exploit their geometry and consequently obtain explicit concentration estimates. 
We also obtain subgaussian concentration estimates for non-proper Gromov hyperbolic spaces and random matrix products, but with less explicit bounds. These results are also new and discussed later in the introduction.

Our approach consists of proving a general concentration type result for cocycles satisfying a certain cohomological equation. This is line with Gordin's method for proving the central limit theorem where the values of cocycles along random walks coming from group actions are related to martingales via a Poisson type equation.

In particular, the  solutions by Benoist--Quint of associated cohomogical equations for Busemann and norm cocycles, respectively on the boundary of hyperbolic spaces \cite{BQ.CLT.hyperbolic} and projective spaces \cite{BQ.CLT.linear}, play a crucial role in the application of our general cocycle-concentration results to these  settings. We slightly extend this solution to adapt it to our purposes, and in the case of proper hyperbolic spaces, we get explicit bounds on its size. These bounds involve the norm $\|\lambda_G(\mu)\|_2$ of the regular representation $\lambda_G$ of a probability measure $\mu$ on the isometry group $G=\Isom(M)$. In a later part, we use various versions of uniform Tits alternatives to control the size of $\|\lambda_G(\mu)\|_2$ which in turn yields effective constants for example in the case of linear groups of rank one, thanks to the works of Breuillard \cite{breuillard.strong.tits, breuillard.height}. 

Finally, we give explicit finite-time estimates for the probability that two independent non-elementary random walks on a proper hyperbolic space generate a free subgroup. We deduce this result from our concentration bounds together with a more  general statement linking uniform large deviations with free-subgroups generated by samplings of random walks.
Our result (Theorem \ref{thm.proba.tits}) quantifies some cases of several known  probabilistic Tits alternatives proven in  \cite{aoun.tits,GMO,taylor.tiozzo}.

Let us now state our first main result, some of its consequences and related remarks.
\subsection{Subgaussian concentration estimates for random walks on hyperbolic spaces}
We first introduce some notation and definitions.

Let $(M,d)$ be a proper metric space, we denote by $G$ its group of isometries. It is a locally compact group and we denote by $\mu_G$ a Haar measure on $G$. For every $r \in [0,1]$, we denote $\mu_{r,\lazy}=r \delta_{\id} + (1-r)\mu$. Furthermore, we denote by $\lambda_G(\mu)$ the operator given by the image of the probability measure $\mu$ under the the left-regular representation of $G$ on $L^2(G)$. 
Finally, having fixed a basepoint $o \in M$, for an element $g \in G$, we set $\kappa(g):=d(go,o)$ and for a set $S \subset G$, $\kappa_S:=\sup\{\kappa(g) : g\in S\}$. The set $S$ is said to be bounded if $\kappa_S<\infty$.

Given $\delta \geq 0$, by a $\delta$-hyperbolic metric space $M$, we understand a metric space $M$ such that for every $x,y,z,o \in M$, we have $(x|y)_o \geq (x|z)_o \wedge (z|y)_o -\delta$, where $(.|.)_.$ is the Gromov product given by $(x|y)_o=\frac{1}{2}(d(x,o)+d(y,o)-d(x,y))$. A probability measure $\mu$ is called \textit{non-elementary} if its support $S$ generates a semigroup that contains two independent loxodromic elements (see \S \ref{subsec.hyp}). We can now state

\begin{theorem}\label{thm.main.with.lambda}
Let $(M,d)$ be a proper geodesic $\delta$-hyperbolic space and $o\in M$. Assume that the group $G=\Isom(M)$ acts cocompactly on $M$. Then, there exists an explicit positive function $D(.,.)$ with $D(.,\lambda)<\infty$ for every $\lambda \in (0,1)$ such that for every  non-elementary probability measure $\mu$ on $G$ with bounded support $S$, for every $t \geq 0$ and $n \in \mathbb{N}$ we have  
\begin{equation}\label{eq.main}
\p\left( |\kappa(R_n) - n \ell(\mu)|\geq nt \right) \leq 2
\exp \left(\frac{-nt^2}{ \kappa_S^2 D(\kappa_S,\|\lambda_G(\mu_{r,\lazy})\|_2)} \right)
\end{equation}
for every $r \in [0,1)$.
\end{theorem}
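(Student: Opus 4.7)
The plan is to reduce concentration of $\kappa(R_n)=d(R_no,o)$ to that of the Busemann cocycle on the Gromov boundary, and then to apply Azuma--Hoeffding to a martingale decomposition obtained from a Poisson-type cohomological equation, in the spirit of Benoist--Quint. Writing $\beta(g,\xi):=\lim_{x\to\xi}(d(go,x)-d(o,x))$ for the Busemann cocycle on $\partial M$, the definition of the Gromov product yields the identity $\kappa(g)=\beta(g,\xi)+2(go\,|\,\xi)_o$, so $\beta(g,\xi)\leq\kappa(g)$ with an additive defect that is small whenever $\xi$ is not aligned with the direction of $go$. Under non-elementarity, the $\mu$-stationary measure $\nu$ on $\partial M$ is not a point mass, so picking two reference points $\xi_1,\xi_2$ that are generic relative to $\nu\otimes\nu$ off the diagonal ensures that $\min_i (R_no\,|\,\xi_i)_o$ stays uniformly bounded in terms of $\delta$ with high probability. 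It thus suffices to prove a subgaussian deviation estimate for $\beta(R_n,\xi)$ around $n\ell(\mu)$ at such reference points.

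For the latter, I would seek $\psi:\partial M\to\R$ solving the cohomological equation $P_\mu\psi-\psi=\bar\beta-\ell(\mu)$, where $P_\mu\psi(\xi)=\int_G\psi(g\xi)\,d\mu(g)$ and $\bar\beta(\xi)=\int_G\beta(g,\xi)\,d\mu(g)$. Granted such a $\psi$, the centered cocycle $\sigma(g,\xi):=\beta(g,\xi)+\psi(g\xi)-\psi(\xi)-\ell(\mu)$ satisfies $\int_G\sigma(g,\xi)\,d\mu(g)=0$ for every $\xi\in\partial M$. Evaluating the cocycle relation along the time-reversed walk $\check R_n:=X_n\cdots X_1$ (which is equidistributed with $R_n$) and telescoping gives the decomposition
\[ \beta(\check R_n,\xi)-n\ell(\mu)=\sum_{k=1}^{n}\sigma(X_k,\check R_{k-1}\xi)+\psi(\xi)-\psi(\check R_n\xi), \]
where the sum on the right is a martingale in the natural filtration with increments pointwise bounded by $2\kappa_S+2\|\psi\|_\infty$. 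Azuma--Hoeffding then yields a tail of the form $2\exp(-cnt^2/(\kappa_S+\|\psi\|_\infty)^2)$, after absorbing the boundary term $\psi(\xi)-\psi(\check R_n\xi)$ by a trivial adjustment of the deviation. Combined with the reduction of the first paragraph, this delivers a subgaussian concentration bound for $\kappa(R_n)$.

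The main obstacle, and the origin of the dependence on $\|\lambda_G(\mu_{r,\lazy})\|_2$, is the quantitative construction of $\psi$ with an explicit $L^\infty$ bound. The operator $\mathrm{Id}-P_\mu$ is not invertible on its own (constants lie in its kernel, and for non-spectral-gap $\mu$ its range is not closed), so I would introduce the lazy measure $\mu_{r,\lazy}$ and write, formally,
\[ \psi \;=\; \sum_{n\geq 0}\bigl(P_{\mu_{r,\lazy}}\bigr)^n\bigl(\bar\beta-\ell(\mu_{r,\lazy})\bigr), \]
whose convergence and sup-norm are governed by the spectral radius of $P_{\mu_{r,\lazy}}$ on a suitable function space on $\partial M$. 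Cocompactness of $G\curvearrowright M$ is crucial here: it allows one to dominate the relevant norms on $\partial M$ by norms on $G$ and thus to transfer the spectral estimate to the left-regular representation, giving the bound $\|\lambda_G(\mu_{r,\lazy})\|_2$ on the effective operator norm. Tracking constants through this Neumann series and through the geometric reduction of the first paragraph yields an explicit function $D(\kappa_S,\|\lambda_G(\mu_{r,\lazy})\|_2)$ and hence the stated inequality.
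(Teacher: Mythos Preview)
Your martingale skeleton (cohomological equation plus Azuma--Hoeffding) is the same as the paper's, but the two places where explicit constants actually enter are handled differently, and one of them contains a real gap.

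\textbf{Reduction from $\kappa(R_n)$ to the Busemann cocycle.} You work on the Gromov boundary and compensate for the defect $2(R_no\,|\,\xi)_o$ by choosing two reference points $\xi_1,\xi_2$ and arguing that $\min_i(R_no\,|\,\xi_i)_o$ is bounded with high probability. This is more delicate than necessary and introduces an extra probabilistic error term that must be tracked explicitly. The paper avoids this entirely by working on the horofunction compactification $\overline{M}^h$, which contains $M$ itself; there $\kappa(g)=\sigma(g,o)$ exactly, so the displacement is literally a value of the cocycle and no reduction is needed. This is why the paper extends the Benoist--Quint solution $\psi$ to all of $\overline{M}^h$ rather than just to $\partial_h M$.

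\textbf{Bounding $\|\psi\|_\infty$.} Here your proposal has a genuine gap. The Neumann series $\psi=\sum_{n\ge 0}P_{\mu_{r,\lazy}}^n(\bar\beta-\ell)$ does not converge in $L^\infty(\partial M)$: $P_{\mu_{r,\lazy}}$ is a Markov operator, so it has norm $1$ on $L^\infty$ regardless of the laziness parameter, and the zero-mean condition on $\bar\beta-\ell$ does not by itself force summability. To make this work you would need a spectral gap for $P_\mu$ on a H\"older-type Banach space on $\partial M$, which is a substantial theorem in its own right and is \emph{not} what $\|\lambda_G(\mu_{r,\lazy})\|_2<1$ gives you. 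The claim that cocompactness lets you ``dominate the relevant norms on $\partial M$ by norms on $G$'' and thereby transfer the $L^2(G)$ spectral bound to the boundary action is not correct as stated: there is no direct comparison between $P_\mu$ acting on functions on $\partial M$ and $\lambda_G(\mu)$ acting on $L^2(G)$.

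The paper's route is different and does not go through any spectral gap for $P_\mu$ on the boundary. It uses the explicit Benoist--Quint solution
\[
\psi(x)=-2\int_{\overline{M}^h}(x\,|\,y)_o\,d\check\nu(y),
\]
and bounds $\|\psi\|_\infty$ by controlling $\sup_y\int(x\,|\,y)_o\,d\nu(x)$. This integral is estimated via a geometric shadow lemma together with the return-probability bound $\mathbb{P}(d(R_n m,m')\le R)\le A_0\,\|\lambda_G(\mu)\|_2^{\,n}$, which is where cocompactness and the regular representation actually enter. In other words, $\|\lambda_G(\mu_{r,\lazy})\|_2$ controls the \emph{regularity of the stationary measure} (a Frostman-type estimate for $\check\nu$), not an operator norm of $P_\mu$ on boundary functions; that regularity is what makes the explicit $\psi$ bounded with the stated constants.
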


This statement will follow from a more general concentration result (Theorem \ref{thm.main.text}) for the Busemann cocycle on the horofunction compactification of $M$.

To convey the dependence of this upper bound to the involved quantities and for practical use, in the following remark we provide a function that one can substitute for the function $D$ in the previous result.  

\begin{remark}[On the upper bound]\label{rk.D}
One can take  
$$D(\kappa, \lambda)= 32 \left(
   16\ln^+ (\kappa)+8A_0/3+33 \right)^2 \frac{1}{(1-\sqrt{\lambda})^4},$$
where $A_0= \left(\frac{\mu_G \left(B_{2R(\delta)+2D_0} \right)}{\mu_G \left(B_{R(\delta) +D_0}\right)} \right)^{1/2}$ with $R(\delta)=14\delta+4$, for $r \geq 0$, $B_r:=\{g \in G : d(go,o) \leq r\}$, and $D_0:=2\textrm{diam}( G\backslash M)$. We also set $D(\kappa,1)=\infty$. Note that if $\mu$ is non-elementary, then for every $r \in (0,1)$, we have $\|\lambda_G(\mu_{r,\lazy})\|_2<1$ (see Remark \ref{rk.lazy.discussion}). Also note that if $\mu$ is symmetric, then $\|\lambda_G(\mu)\|_2=\|\lambda_G(\mu_{0,\lazy})\|_2<1$. 
\end{remark}

\begin{remark}\label{rk.as.mentioned}
1. (Non-proper case)
As mentioned earlier, we also obtain subgaussian concentration estimates without the properness assumption but in this case, the dependence on $\mu$ at the right-hand-side of \eqref{eq.main} is less explicit (Proposition \ref{prop.hyp}).\\[2pt]
2. (Random walks with unbounded support) It is possible to have a version of our result where the bounded support assumption on the probability measure $\mu$ is replaced by a finite exponential moment assumption and obtain a Bennett--Bernstein type concentration inequality. However, the constants that appear in that version are more complicated to express. This point is discussed in more detail in Remark \ref{rk.unbounded.support}.
\end{remark}

In the sequel, we will see that each of the two aspects of the upper bound in Theorem \ref{thm.main.with.lambda}, namely its subgaussian form and its parameters of dependence, have implications and strenghtenings. On the one hand,   by combining this  upper bound with versions of uniform Tits alternatives in various contexts (which entail uniform bounds for $\|\lambda_G(\mu)\|_2$, see Lemma \ref{lemma.uniformtits.radius}), we will obtain uniform concentration estimates for a class of driving probability measures, see Corollaries \ref{corol.Koubi} and \ref{corol.manu}. On the other hand, the subgaussian character allows us for instance to provide a global quadratic lower bound (see Corollary \ref{corol.LDP}) for the rate function of large deviations, recently studied in this setting by \cite{BMSS}. Let us now explain these consequences.

\vspace*{0.1cm}

\subsubsection{The case of hyperbolic and rank-one linear groups} Firstly, specifying Theorem \ref{thm.main.with.lambda} to hyperbolic groups, and using Koubi's uniform Tits alternative \cite[Theorem 5.1]{koubi}, we obtain the following more precise concentration result for random walks on hyperbolic spaces. 

\begin{corollary}\label{corol.Koubi}
Let $(M,d)$ be a proper geodesic hyperbolic metric space and $o\in M$. Then there exists a constant $A_M>0$ such that for any group $\Gamma<G$ that acts properly and cocompactly on $M$, there exist constants $\alpha_\Gamma>0$ and $N_\Gamma \in \mathbb{N}$ depending only on $\Gamma$ such that for every non-elementary probability measure $\mu$ of finite support $S$ generating $\Gamma$, for every $t>0$ and $n\in \N$, setting $m_\mu=\min_{g \in S}\mu(g)$, we have
$$\p\left( |\kappa(R_n) - n\ell(\mu)|\geq nt \right) \leq
2 \exp\left(\frac{-nt^2}{m_\mu^{N_\Gamma} \alpha_\Gamma \kappa_S^2 (\ln^+(\kappa_S) + A_M)}\right)$$
\end{corollary}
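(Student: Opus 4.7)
The plan is to apply Theorem~\ref{thm.main.with.lambda} with a fixed choice of $r\in(0,1)$, say $r=1/2$, to the non-elementary measure $\mu$, and then to massage the resulting upper bound using properties specific to the cocompact lattice $\Gamma < G$. Two tasks remain after invoking \eqref{eq.main}: first, recognise that the geometric ingredients inside $D$ depend only on $M$; second, bound $\|\lambda_G(\mu_{1/2,\lazy})\|_2$ away from $1$ in an explicit way in terms of $m_\mu$ and of constants depending only on $\Gamma$.

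For the geometric step, inspection of the formula for $D$ in Remark~\ref{rk.D} shows that the only parameters besides $\kappa$ and $\lambda$ are $\delta$ and $D_0=2\diam(G\backslash M)$, which enter solely through the explicit constant $A_0$. Setting $A_M:=\tfrac{8A_0}{3}+33$ and absorbing numerical universal constants, this produces a clean estimate of the form
\[
\kappa_S^2\,D(\kappa_S,\lambda)\;\leq\;C\,\kappa_S^2\,\frac{\ln^+(\kappa_S)+A_M}{(1-\sqrt{\lambda})^4}
\]
with $C$ universal and $A_M$ depending only on $M$.

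For the spectral-gap step, I would apply Koubi's uniform Tits alternative \cite[Theorem~5.1]{koubi}: because $\Gamma$ is a non-elementary hyperbolic group, there is an integer $N_\Gamma$, depending only on $\Gamma$, such that any symmetric subset $T\subset\Gamma$ generating a non-elementary subgroup contains two words of length at most $N_\Gamma$ in $T$ generating a free subgroup of rank two. Applied to the symmetrisation $S\cup S^{-1}$ of $\supp(\mu)$, this yields such a free pair of bounded length. Since $\Gamma$ is discrete in $G$ one has $\|\lambda_G(\mu)\|_2=\|\lambda_\Gamma(\mu)\|_2$, so the foreseen Lemma~\ref{lemma.uniformtits.radius}, which converts a free pair contained in the support of $\mu^{*N}$ into a quantitative lower bound on $1-\|\lambda_\Gamma(\mu_{1/2,\lazy})\|_2$, yields $\|\lambda_G(\mu_{1/2,\lazy})\|_2\leq 1-c_\Gamma\, m_\mu^{N_\Gamma}$ for some constant $c_\Gamma>0$ depending only on $\Gamma$. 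Feeding this back into Theorem~\ref{thm.main.with.lambda} produces the stated denominator $m_\mu^{N_\Gamma}\alpha_\Gamma\,\kappa_S^2(\ln^+(\kappa_S)+A_M)$.

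The main obstacle is precisely this quantitative spectral-gap step, i.e.\ the content of Lemma~\ref{lemma.uniformtits.radius}: converting a qualitative Tits-type witness (a free pair of small word length inside the support) into an effective, polynomial-in-$m_\mu$ lower bound on $1-\|\lambda_G(\mu_{1/2,\lazy})\|_2$. The natural route is a comparison argument: the $N_\Gamma$-fold convolution $\mu^{*N_\Gamma}$ gives mass at least $m_\mu^{N_\Gamma}$ to each of the two free generators, and Kesten's identity $\|\lambda_{F_2}(\nu_{\mathrm{unif}})\|_2=\sqrt{3}/2$ combined with a positivity/domination inequality for convolution operators then transfers this into a spectral gap of order $m_\mu^{N_\Gamma}$, up to a $\Gamma$-dependent multiplicative constant.
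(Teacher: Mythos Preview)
Your proposal is correct and follows essentially the same route as the paper: apply Theorem~\ref{thm.main.with.lambda} with $r=1/2$, extract the $M$-only dependence from the formula for $D$ in Remark~\ref{rk.D}, invoke Koubi's uniform Tits alternative to find a free pair of bounded word length, and convert this via Lemma~\ref{lemma.uniformtits.radius} (whose proof you sketch correctly through the decomposition $\mu'^{\ast N}=m^{N}\eta+(1-m^{N})\zeta$ and Kesten's $\sqrt{3}/2$) into a bound $\|\lambda_\Gamma(\mu_{1/2,\lazy})\|_2\le 1-c_\Gamma m_\mu^{N_\Gamma}$, finally transferring to $\lambda_G$ via discreteness and Berg--Christensen. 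One small simplification: Koubi's theorem as stated does not require symmetry of the generating set, so there is no need to pass to $S\cup S^{-1}$ first; the paper applies it directly to $S$ and handles symmetrisation only inside the proof of Lemma~\ref{lemma.uniformtits.radius} by working with $\mu\ast\check{\mu}$, which also guarantees that the free pair genuinely lies in the support of a convolution power of the lazy measure.
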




\begin{remark}\label{rk.caval-sambu}
Using the quantitative Tits alternative in the recent work of Cavallucci--Sambusetti \cite[Theorem 1.1]{cavallucci-sambusetti}, under additional assumptions on the hyperbolic space $(M,d)$ (such as the existence of a convex geodesic bicombing with certain properties) and for a torsion-free group $\Gamma$, one can provide a version of the previous corollary dropping the cocompactness assumption of the $\Gamma$-action and replacing the constants $\alpha_\Gamma$ and $N_\Gamma$ with constants depending only on a packing parameter of the hyperbolic space $M$ (see \cite[\S 2.2]{cavallucci-sambusetti}).
\end{remark}

Specifying Theorem \ref{thm.main.with.lambda} to rank one matrix groups and using the strong Tits alternative of Breuillard \cite{breuillard.strong.tits, breuillard.height}, we obtain concentrations for 
random matrix products of discrete 
non-amenable subgroups of rank-one semisimple
algebraic groups. A further aspect of the following corollary is that thanks to the work of Breuillard, the implied constants can be effectively calculated.

We need some notation to state the next corollary. Let $\kk$ be a local field (i.e.\ in characteristic zero $\R$, $\C$ or a finite extension of $\Q_p$ for a prime number $p$ and in positive characteristic, a finite extension of $\mathbb{F}_p((T))$). We denote by $\|\cdot\|$ the canonical norm on $\kk^d$ for a fixed discrete valuation on $\kk$ and consider 
the associated operator norm on the space of $d\times d$-matrices. Moreover, if $S$ is a finite subset of $\Mat_d(\kk)$, we denote by $\kappa_S:=\sup\{\ln \|g\| : g\in S\}$. 
Finally, if $\mu$ is a probability measure with finite first order moment on $\textrm{GL}_d(\kk)$, we denote by 
$\ell(\mu)$ the top Lyapunov exponent, i.e.~ the almost sure limit of $ \frac{1}{n} \ln \|R_n\|$.

\begin{corollary}\label{corol.manu} 
Let $\kk$ be a local field and $\mathbb{H}\subseteq \mathrm{SL}_d$ be a connected semisimple linear algebraic group of $\kk$ rank-one defined over $\kk$. 
For every $d\in \N$, there exist  constants $\alpha_d>0$, $N_d \in \N$ depending only on the dimension $d$ and constants $A=A(\mathbb{H}, \kk)$ such that  for every finitely supported probability measure $\mu$ whose support generates a non-amenable discrete subgroup of $\mathbb{H}(\mathrm{k})$, for every $t>0$ and $n \in \N$, the following holds: 
\begin{equation}\p \left( | \frac{1}{n} \ln \|R_n\| - \ell(\mu)|   \geq t \right)\leq 2 \exp\left(\frac{-  n  t^2}{m_\mu^{N_d} \alpha_d \kappa_S^2(\ln^+(\kappa_S)+A_{\mathbb{H}, \kk})^2} \right).\label{concentration2}\end{equation}
\end{corollary}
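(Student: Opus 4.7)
The plan is to deduce Corollary \ref{corol.manu} from Theorem \ref{thm.main.with.lambda} applied to the natural isometric action of $G := \mathbb{H}(\kk)$ on its associated rank-one hyperbolic space $M$, and then use Breuillard's strong uniform Tits alternative \cite{breuillard.strong.tits, breuillard.height} together with Lemma \ref{lemma.uniformtits.radius} to turn the existence of free subgroups of bounded word length into a uniform spectral-gap estimate on $\|\lambda_G(\mu_{r,\lazy})\|_2$.

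\medskip
\textbf{Geometric setup.} Since $\mathbb{H}$ has $\kk$-rank one, the group $G$ acts transitively and by isometries on a proper geodesic $\delta$-hyperbolic space $M$: the rank-one Riemannian symmetric space when $\kk$ is Archimedean, and the associated Bruhat--Tits tree when $\kk$ is non-Archimedean. Both $\delta$ and the volume-growth constant $A_0$ of Remark \ref{rk.D} depend only on $(\mathbb{H}, \kk)$. Fixing a basepoint $o\in M$ with maximal compact stabilizer, the $KAK$-decomposition yields constants $c_1 > 0$ and $c_2 \geq 0$, depending only on $(\mathbb{H}, \kk, d)$, with
\[
\bigl|\kappa(g) - c_1 \ln\|g\|\bigr| \leq c_2 \quad \text{for every } g \in G,
\]
so that the drift on $M$ equals $c_1 \ell(\mu)$ and $\kappa_S$ is uniformly comparable with $\sup_{g\in S}\ln\|g\|$. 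A non-amenable discrete subgroup of $\mathbb{H}(\kk)$ contains two loxodromic elements with disjoint fixed-point pairs on $\partial M$, so $\mu$ is non-elementary on $\Isom(M)$. Theorem \ref{thm.main.with.lambda} applied to $\kappa(R_n)$, followed by harmless absorption of the uniform additive error $c_2$, yields an inequality of shape
\[
\p\!\left(\bigl|\tfrac{1}{n}\ln\|R_n\| - \ell(\mu)\bigr| \geq t\right) \leq 2 \exp\!\left(\frac{-C_1\, n t^2}{\kappa_S^2 \, D(\kappa_S,\, \|\lambda_G(\mu_{r,\lazy})\|_2)}\right),
\]
with $C_1 = C_1(\mathbb{H}, \kk, d) > 0$ and $D$ the explicit function from Remark \ref{rk.D}.

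\medskip
\textbf{Uniform spectral gap via Breuillard.} It remains to bound $\|\lambda_G(\mu_{r,\lazy})\|_2$ away from $1$ uniformly, in terms of $m_\mu$ and $d$ only. Breuillard's strong uniform Tits alternative produces an integer $N_d \in \N$, depending only on $d$, and two elements $a, b \in S^{N_d}$ freely generating a non-abelian subgroup. Each of $a,b$ carries mass at least $m_\mu^{N_d}$ under $\mu^{*N_d}$, so Lemma \ref{lemma.uniformtits.radius} (applied with a suitable fixed $r \in (0,1)$) delivers
\[
\|\lambda_G(\mu_{r,\lazy})\|_2 \leq 1 - \beta_d\, m_\mu^{N_d'},
\]
for some $\beta_d > 0$ and $N_d' \in \N$ depending only on $d$. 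Inserting this into Remark \ref{rk.D}'s formula, which grows like $(1-\sqrt{\cdot})^{-4}$ in the second variable, yields $D(\kappa_S, \|\lambda_G(\mu_{r,\lazy})\|_2) \lesssim m_\mu^{-cN_d'} (\ln^+\kappa_S + A_0)^2$. Absorbing the geometric constants $\delta$, $A_0$, $c_1$, $c_2$ into $A_{\mathbb{H},\kk}$ and renaming the remaining constants produces \eqref{concentration2}.

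\medskip
\textbf{Main obstacle.} The only genuinely non-routine step is the spectral-gap estimate: converting Breuillard's combinatorial statement (existence of free generators of word length $\leq N_d$) into a quantitative bound on the norm of the non-symmetric lazy measure $\mu_{r,\lazy}$ in the regular representation, with polynomial-in-$m_\mu$ dependence whose exponent is controlled only by $d$. This is precisely the content of Lemma \ref{lemma.uniformtits.radius}, whose proof proceeds via a Kesten-type argument applied to $\mu_{r,\lazy}^{*} * \mu_{r,\lazy}$. The rest of the argument is bookkeeping of which constants may depend on $(\mathbb{H}, \kk)$ versus only on $d$.
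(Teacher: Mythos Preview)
Your approach is essentially that of the paper: apply Theorem \ref{thm.main.with.lambda} to the rank-one symmetric space or Bruhat--Tits tree of $\mathbb{H}(\kk)$, then use Breuillard's strong Tits alternative together with Lemma \ref{lemma.uniformtits.radius} to obtain a uniform spectral gap. Two points, however, require more care.

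First, Breuillard's theorem (Theorem \ref{thm.cite.manu}) is stated for \emph{symmetric} sets containing the identity, and your support $S$ need not be symmetric. You cannot directly assert that $S^{N_d}$ contains free generators. The paper handles this by first passing to $\mu'' := \mu_{1/2,\lazy} * (\mu_{1/2,\lazy})^{-1}$, whose support $S''$ is symmetric and contains the identity; Breuillard's theorem is applied to $S''$, Lemma \ref{lemma.uniformtits.radius} is applied to $\mu''$, and one then uses $\|\lambda_\Gamma(\mu'')\|_2 = \|\lambda_\Gamma(\mu_{1/2,\lazy})\|_2^2$ together with $m_{\mu''} \geq m_\mu^2/4$ to conclude. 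Your last paragraph hints at this symmetrization, but in the body of the argument you apply Breuillard to $S$ itself, which is not justified.

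Second, Lemma \ref{lemma.uniformtits.radius} bounds $\|\lambda_\Gamma(\cdot)\|_2$ for the \emph{discrete} group $\Gamma$, whereas Theorem \ref{thm.main.with.lambda} involves $\|\lambda_G(\cdot)\|_2$ for the ambient locally compact group $G = \Isom(M)$. You write $\|\lambda_G(\mu_{r,\lazy})\|_2$ directly in the conclusion of the lemma, but this transition is exactly where the discreteness hypothesis on $\Gamma$ enters: by the Berg--Christensen result \cite[Corollaire 3]{berg-christensen}, discreteness of $\Gamma$ in $G$ gives $\|\lambda_\Gamma(\mu_{1/2,\lazy})\|_2 = \|\lambda_G(\mu_{1/2,\lazy})\|_2$. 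This step should be made explicit, as it is the sole place the discreteness assumption is used.
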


\begin{remark}[About the discreteness assumption]
1. Both of the above corollaries are obtained from Theorem \ref{thm.main.with.lambda} in the following way: the respective versions of Tits alternatives allow us to deduce bounds on the norm $\|\lambda_\Gamma(\mu)\|$ of the regular representation on $\ell^2(\Gamma)$, which is equal to $\|\lambda_G(\mu)\|$ thanks to the discreteness assumption. In general, even though we have uniform upper bounds for $\|\lambda_\Gamma(\mu)\|$, we are not able to transfer this to a bound on $\|\lambda_G(\mu)\|$ without discreteness assumption. Indeed, by \cite{manu2003,kuranishi}, in any connected semisimple Lie group $G$, for any element $g \in G$, one can find pairs of elements $\{a_n,b_n\}$ that converge to $g$ and that generate a non-abelian free group, so that for the uniform probability measure $\mu_n$ supported on $\{a_n,b_n,a_n^{-1},b_n^{-1}\}$, we have $\frac{\sqrt{3}}{2}=\|\lambda_\Gamma(\mu_n)\|<\|\lambda_G(\mu_n)\| \to 1$.\\[2pt]
2. We also note that under the discreteness assumption, the fact that the support $S$ generates a non-elementary group implies, thanks to various versions of Margulis Lemma, a positive lower bound for $\kappa_S$. This lower bound depends in Corollary \ref{corol.Koubi} on some parameters of $M$ and the group generated by $S$ (see \cite[Theorem 5.21]{besson-courtois-gallot-sambusetti}). In Corollary \ref{corol.manu}, it depends only on $\mathbb{H}(\kk)$ (see e.g.\ \cite[Chapter 8]{ballmann-gromov-schroeder}). 
\end{remark} 

\vspace*{0.1cm}

\subsubsection{Rate function of LDP}\label{subsec.ldp}

We now mention a consequence of Theorem \ref{thm.main.with.lambda} concerning the rate function of large deviation principles of random walks on hyperbolic spaces recently studied by \cite{BMSS}. The authors prove that the sequence of random variables
$\frac{\kappa(R_n)}{n}$ satisfies a \emph{large deviation principle}  with proper convex rate function $I_\mu: [0,\infty) \to [0,+\infty]$ vanishing only at the drift $\ell(\mu)$. Recall that this means that $I_\mu$ is a lower-semicontinuous function such that for every measurable subset $J$ of $\R$, we have 
\begin{equation}\label{eq.def.LDP}
\underset{\alpha \in \inte(J)}{-\inf I(\alpha)} \leq \underset{n \rightarrow \infty}{\liminf} \frac{1}{n}\ln \mathbb{P}(\frac{\kappa(R_n)}{n} \in J) \leq \underset{n \rightarrow \infty}{\limsup} \frac{1}{n}\ln \mathbb{P}(\frac{\kappa(R_n)}{n} \in J) \leq \underset{\alpha \in \overline{J}}{-\inf I(\alpha)}
\end{equation}
where $\inte(J)$ denotes the interior and $\overline{J}$ the closure of $J$.
To the best of our knowledge, no explicit global estimate for the rate function exists in the literature. Theorem \ref{thm.main.with.lambda} allows us to give an explicit quadratic lower bound for the rate function $I_\mu$ in our setting, i.e.\ when $M$ is proper and the non-elementary probability measure $\mu$ has a bounded support. 
 
\begin{corollary}[Quadratic lower bound]\label{corol.LDP}
Under the assumptions of Theorem \ref{thm.main.with.lambda}, for every $t \in [0,\infty)$ the rate function $I_\mu$ of the sequence $\frac{1}{n}\kappa(R_n)$ satisfies
$$I_\mu(t)\geq \frac{(t-\ell(\mu))^2}{\kappa_S^2 D\left(\kappa_S, \|\lambda_G(\mu_r, \lazy)\|_2 \right)},$$
for every $r \in [0,1)$.
\end{corollary}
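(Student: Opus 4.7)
The plan is to combine the concentration upper bound of Theorem \ref{thm.main.with.lambda} with the large deviation lower bound established in \cite{BMSS} for the sequence $\kappa(R_n)/n$. Abbreviate $\phi(s) := s^2 / (\kappa_S^2\, D(\kappa_S, \|\lambda_G(\mu_{r,\lazy})\|_2))$, so that inequality \eqref{eq.main} reads $\p(|\kappa(R_n) - n\ell(\mu)| \geq ns) \leq 2\exp(-n\phi(s))$ for every $s \geq 0$. The case $t = \ell(\mu)$ is immediate, since $I_\mu(\ell(\mu)) = 0 = \phi(0)$.

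For $t > \ell(\mu)$ and $\epsilon \in (0, t-\ell(\mu))$, I would apply the LDP lower bound of \cite{BMSS} to the open set $G_\epsilon := (t-\epsilon, t+\epsilon) \cap [0,\infty)$, which contains $t$:
$$\liminf_n \frac{1}{n} \log \p(\kappa(R_n)/n \in G_\epsilon) \;\geq\; -\inf_{x \in G_\epsilon} I_\mu(x) \;\geq\; -I_\mu(t).$$
On the other hand, the inclusion $\{\kappa(R_n)/n \in G_\epsilon\} \subseteq \{\kappa(R_n)-n\ell(\mu) \geq n(t-\epsilon-\ell(\mu))\}$ combined with Theorem \ref{thm.main.with.lambda} yields
$$\limsup_n \frac{1}{n}\log \p(\kappa(R_n)/n \in G_\epsilon) \;\leq\; -\phi(t-\epsilon-\ell(\mu)).$$
Chaining these two bounds gives $I_\mu(t) \geq \phi(t-\epsilon-\ell(\mu))$, and letting $\epsilon \to 0^+$ (using continuity of $\phi$) delivers the desired inequality. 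The case $t < \ell(\mu)$ is handled symmetrically, using the lower-tail event $\{\kappa(R_n)-n\ell(\mu) \leq -n(\ell(\mu)-t-\epsilon)\}$; note that $G_\epsilon$ remains open in $[0,\infty)$ even when $t - \epsilon < 0$, which covers the boundary case $t = 0$.

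I do not anticipate a substantive obstacle: this is the standard mechanism by which a concentration inequality produces a quadratic lower bound on an LDP rate function. The only points requiring care are choosing $\epsilon < |t-\ell(\mu)|$ so that the concentration exponent $\phi(|t-\ell(\mu)|-\epsilon)$ is strictly positive, together with the mild bookkeeping when $t$ lies near the boundary of $[0,\infty)$, which is absorbed into the intersection defining $G_\epsilon$.
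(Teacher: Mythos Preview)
Your argument is correct and is exactly the standard mechanism the paper has in mind. In fact the paper does not spell out a proof of this corollary at all: it is stated in the introduction as an immediate consequence of Theorem~\ref{thm.main.with.lambda} together with the large deviation principle of \cite{BMSS}, and your write-up is precisely how one fills in those details.
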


The proof of this corollary is immediate from the property \eqref{eq.def.LDP} defining the function $I_\mu$ and the estimate given by Theorem \ref{thm.main.with.lambda}.

\begin{remark}
In the general case of random walks with finite exponential moment, one can clearly not get such a quadratic lower bound, see Remark \ref{rk.unbounded.support} for the type of global lower bound that one can obtain using our methods.
\end{remark}

\subsection{Quantitative probabilistic Tits alternative}

It is known since the foundational work of Gromov \cite{gromov} that groups acting non-elementarily on hyperbolic spaces contain non-abelian free subgroups. The main result of this part is a probabilistic quantification of this fact which says that if we sample two independent random walks at their $n^{th}$-steps, the probability that the two elements generate a free group of rank two is exponentially close to one. Moreover, an important aspect is that this probability is explicitly described in terms of the norm of the driving measure $\mu$ in the regular representation and the size of its support.

\subsubsection{Probabilistic free-subgroup theorem}

\begin{theorem}\label{thm.proba.tits}
Keep the assumptions of Theorem
\ref{thm.main.with.lambda}. Then, there exist explicit functions $n_0(\cdot)$ and  $T(\cdot, \cdot)$ both with values in $(0,+\infty)$ such that for any non-elementary probability measure $\mu$ on $G=\Isom(M)$, denoting $(R_n)_{n\in \N}$ and $(R'_n)_{n\in \N}$ two independent random walks driven by $\mu$, for every 
$n> n_0 \left (\|\lambda_{G}(\mu_{1/2, \lazy})\|_2 \right)$, we have 
\begin{equation}\label{eq.proba.tits}
\p \left( \langle R_n, R'_n \rangle \,\textrm{is free} \right) \geq 1-50 \exp\left(-n\, T(\kappa_S, \|\lambda_{G}(\mu_{1/2, \lazy})\|_2) \right).
\end{equation}
\end{theorem}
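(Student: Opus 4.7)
The plan is to combine a deterministic quantitative Schottky/ping-pong criterion in $\delta$-hyperbolic geometry with the concentration inequality of Theorem \ref{thm.main.with.lambda}. The deterministic half asserts the existence of a constant $C_0 = C_0(\delta) > 0$ such that any two isometries $g, h \in G$ satisfying $\kappa(g), \kappa(h) \geq L$ for some $L \geq C_0$, together with the eight Gromov-product bounds $(g^{\epsilon}o | h^{\epsilon'} o)_o, (g^{\epsilon}o | g^{-\epsilon} o)_o, (h^{\epsilon}o | h^{-\epsilon} o)_o \leq L/4$ for $\epsilon, \epsilon' \in \{\pm 1\}$, generate a free group of rank two. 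This is standard ping-pong on neighborhoods of the four axis endpoints in the horofunction boundary, and is essentially the ``general statement linking uniform large deviations with free subgroups'' mentioned in the introduction.

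To apply this with $g = R_n$ and $h = R_n'$ and $L = n\ell(\mu)/2$, I would control each required quantity via Theorem \ref{thm.main.with.lambda} and a union bound. The displacement bounds $\kappa(R_n), \kappa(R_n') \geq L$ follow directly from Theorem \ref{thm.main.with.lambda} applied with $t = \ell(\mu)/2$, using $\kappa(g^{-1}) = \kappa(g)$ to handle the inverses. For the cross Gromov products $(R_n^\epsilon o | R_n'^{\epsilon'} o)_o$, the identity $(x|y)_o = \tfrac{1}{2}(d(x,o) + d(y,o) - d(x,y))$ reduces the problem to a lower bound on $d(R_n^\epsilon o, R_n'^{\epsilon'} o) = \kappa((R_n^\epsilon)^{-1} R_n'^{\epsilon'})$. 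The latter is the displacement at $o$ of a product of $2n$ independent isometries whose laws are $\check\mu$ or $\mu$ depending on the signs, with asymptotic drift $\ell(\mu)$ (since $\ell(\check\mu) = \ell(\mu)$ via the bijection $R_n^{-1}\sim\check R_n$). A two-block variant of Theorem \ref{thm.main.with.lambda} (equivalently the cocycle-level concentration Theorem \ref{thm.main.text} applied to the non-homogeneous walk) then yields a lower bound of $7n\ell(\mu)/4$ with probability $1 - O(\exp(-nT'))$, giving a Gromov product of at most $n\ell(\mu)/8 < L/4$.

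The main obstacle is controlling the self Gromov products $(R_n o | R_n^{-1} o)_o$ and $(R_n' o | R_n'^{-1} o)_o$: the trick above fails because $R_n^2$ is not a random walk. I would fall back on the Busemann-cocycle machinery underlying Theorem \ref{thm.main.with.lambda}: uniform concentration of the cocycle $\sigma(R_n,\xi)$ around $n\ell(\mu)$, combined with the fact that a non-elementary random walk converges to two distinct boundary points under its forward and backward iterates, implies that the horofunction values at $R_n o$ and $R_n^{-1} o$ separate with exponentially high probability, yielding the required upper bound with quantitative dependence on $\|\lambda_G(\mu_{1/2,\lazy})\|_2$ and $\kappa_S$ inherited from Theorem \ref{thm.main.with.lambda}. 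A final union bound over this fixed collection of eight events, with $n_0$ chosen so that $L \geq C_0$ and $T$ equal to the minimum of the various exponential rates above, produces the estimate of Theorem \ref{thm.proba.tits}, with the factor $84$ absorbing the number of events in the union bound.
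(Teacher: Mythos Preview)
Your overall architecture matches the paper's: a deterministic ping-pong criterion (Lemma \ref{lemma.ping-pong}) plus probabilistic control of displacements and Gromov products via Theorem \ref{thm.main.text}, followed by a union bound. Two places, however, need sharpening.

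\textbf{The self Gromov product $(R_n o\mid R_n^{-1}o)_o$.} Your proposed fallback---that forward and backward iterates converge to distinct boundary points, hence horofunction values ``separate''---is an asymptotic statement and does not by itself yield a finite-$n$ exponential bound in the stated parameters. The paper uses a concrete device you are missing: introduce the midpoint walk $R_{\lfloor n/2\rfloor}$ and the tail $R_{\lfloor n/2\rfloor+1,n}=X_{\lfloor n/2\rfloor+1}\cdots X_n$, and apply the four-point $\delta$-inequality to $R_no$, $R_n^{-1}o$, $R_{\lfloor n/2\rfloor}o$, $R_{\lfloor n/2\rfloor+1,n}^{-1}o$. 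This gives
\[
\min\bigl\{(R_no\mid R_n^{-1}o)_o,\ (R_no\mid R_{\lfloor n/2\rfloor}o)_o,\ (R_n^{-1}o\mid R_{\lfloor n/2\rfloor+1,n}^{-1}o)_o\bigr\}\leq (R_{\lfloor n/2\rfloor}o\mid R_{\lfloor n/2\rfloor+1,n}^{-1}o)_o+2\delta.
\]
The right-hand side is small because $R_{\lfloor n/2\rfloor}$ and $R_{\lfloor n/2\rfloor+1,n}$ are \emph{independent}; the second and third terms on the left are \emph{large} by the displacement identity $(R_no\mid R_{\lfloor n/2\rfloor}o)_o=\tfrac12(\kappa(R_n)+\kappa(R_{\lfloor n/2\rfloor})-\kappa(R_{\lfloor n/2\rfloor+1,n}))$ and \textbf{ULD}. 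Hence the first term must be small. This is Lemma \ref{lemma.estimate.rw}(ii), and it is the genuine idea your sketch lacks.

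\textbf{Eliminating $\ell(\mu)$ from the constants.} Your $L=n\ell(\mu)/2$ and your exponential rates (which come from $t=\ell(\mu)/2$ in Theorem \ref{thm.main.with.lambda}) both depend on $\ell(\mu)$, so as written your $n_0$ and $T$ are functions of $\ell(\mu)$, not of $\|\lambda_G(\mu_{1/2,\lazy})\|_2$ alone as the statement requires. The paper closes this gap with an explicit lower bound on the drift in terms of $\|\lambda_G(\mu_{1/2,\lazy})\|_2$ (Proposition \ref{prop.drift.lower.bound}), obtained from the return-probability estimate of Lemma \ref{lemma.bq.5.2} together with a doubling argument for Haar balls. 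You need this ingredient.

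A minor point: for the cross terms $(R_n^\epsilon o\mid R_n'^{\epsilon'}o)_o$ the paper does not pass to the $2n$-step non-homogeneous walk as you do; it simply uses the identity $(go\mid y)_o=\tfrac12(\kappa(g)-\sigma(g,y))$ together with the uniform-in-$y$ cocycle concentration (Theorem \ref{thm.main.text}), conditioning on the independent walk $R_n'$. This avoids the two-block variant you invoke.
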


We proceed with a few remarks on the statement and some consequences.

\begin{remark}[The explicit estimate]\label{rk.explicit.tits}${}$\\
(i) For the function appearing in the above statement, one can take
$$T(\kappa, \lambda)=  \frac{1}{A_M} \frac{(\ln\lambda)^2 (1-\sqrt{\lambda})^4}{\kappa^2(\ln^+(\kappa) +1)^2} \qquad \text{and} \qquad n_0(\lambda)=2-A_M  \frac{1}{\ln \lambda},$$
where the constant $A_M>0$ is related only to a doubling constant of the Haar measure on $G$ and to the diameter of $G \backslash M$ 
 (see  \eqref{eq.explicit.guy} for its expression).
\\[2pt]
(ii)  Unlike in our previous results, the left-hand-side \eqref{eq.proba.tits} is independent of the choice of basepoint $o$. One can therefore replace $\kappa_S$ by the joint minimal displacement $L(S)$ of $S$  (\cite{breuillard-fujiwara}) given by $\inf_{x \in M} \sup_{s \in S} d(sx,x)$, which is independent of any basepoint.\\[2pt]
(iii) Finally, the choice of $1/2$ for the lazy random walk $\mu_{1/2,\lazy}$ is for convenience: it ensures that the associated operator norm is strictly less than one (which might not be the case for $\mu$ due to the non-symmetry of $\mu$, see Remarks \ref{rk.lazy.discussion} and \ref{rk.lazy.again}).
\end{remark}

\begin{remark}
Using similar techniques, one can also prove a more general version of this result where several (more than two) independent copies of random walks, even with different step-distributions, are considered.
\end{remark}

\subsubsection{Some consequences}\label{subsub.tits.conseq}
\textbullet ${}$ For discrete subgroups of $\Isom(M)$, in the respective settings, using Corollaries \ref{corol.Koubi} and \ref{corol.manu} (see also Remark \ref{rk.caval-sambu}), we can deduce an explicit expression for the right-hand-side of \eqref{eq.proba.tits} as well for its range of validity controlled by $n_0(\cdot)$  (see Remark \ref{rk.last.rk}).\\[2pt]
\textbullet ${}$ Moreover, it is known that for a discrete subgroup of isometries $\Gamma$ of a proper geodesic hyperbolic space $M$ such that $\Isom(M)$ acts cocompactly on $M$, the group $\Gamma$ is either virtually nilpotent or non-elementary (see e.g.\ \cite[Corollary 3.13]{cavallucci-sambusetti}). Hence Theorem \ref{thm.proba.tits} can be seen as a quantitative probabilistic Tits alternative for discrete groups of isometries of $M$.\\[2pt]
\textbullet ${}$ Theorem \ref{thm.proba.tits} gives an explicit version of a result by Taylor--Tiozzo \cite[Corollary 1.6]{taylor.tiozzo} under additional hypotheses. We also refer to Gilman--Miasnikov--Osin \cite[Theorem 1.2]{GMO} for a previous result in the particular case of Gromov-hyperbolic groups.
Finally, in the setting of discrete subgroups of rank one semi-simple linear algebraic groups, Theorem \ref{thm.proba.tits} provides an effective version of the probabilistic Tits alternative proved by the first-named-author (see \cite[Theorem 1.1]{aoun.tits}).

\subsection{Random matrix products}
The concentration estimates that we obtain in Section \ref{sec.cocycle} for general cocycles also allow us to deduce concentration estimates for random matrix products in arbitrary dimension, but these are less explicit compared to Theorem \ref{thm.main.with.lambda}. Before stating the result we recall some known facts; we refer to \S \ref{subsec.matrix} for more details.  
Let $\mu$ be a probability measure on $\GL_d(\C)$ whose support generates a strongly irreducible and proximal subgroup, then there exists a unique $\mu$-stationary probability measure $\nu$ on the projective space of $\C^d$ (\cite{Furstenberg.boundary,guivarch-raugi}). The stationary measure $\nu$ enjoys some regularity properties. It is non-degenerate 
(i.e.\ does not charge any proper hyperplane) \cite{Furstenberg.boundary}, log-regular under a finite second order moment \cite{BQ.CLT.linear} and H\"{o}lder regular under a finite exponential moment assumption \cite{Guivarch3}. Suppose now $\mu$ has bounded support and consider $\mathfrak{c}(\mu):=
\sup_{x\in \C^d\setminus \{0\}} \int \ln \frac{\|x\|\,\|y\|}{|\langle x, y\rangle|} d\nu(\C y)$. It follows from the aforementioned regularity properties that this quantity is finite. Finally, we denote $\mu^\ast$ the pushforward of $\mu$ by the map $g\mapsto g^{*}$, where $g^*$ is the conjugate-transpose of $g$. With these at hand, we are now ready to state

\begin{proposition}
Let $\mu$ be a boundedly supported probability measure on $\GL_d(\C)$ such that the semigroup generated by the support $S$ of $\mu$ is strongly irreducible and proximal.
Let $\kappa_S:=\max\{\ln\|g\|  \vee  \ln \|g^{-1}\|; g \in S\}$ and  $\mathfrak{c}=\mathfrak{c}(\mu^\ast)$. Then, for every $t>0$ and $n \in \mathbb{N}$, we have
$$ \sup_{v\in \C^d\setminus \{0\}}\p\left( \left| \ln \frac{\|R_n v \|}{   \|v\|} - n\ell(\mu)\right|\geq nt \right) \leq 2\exp\left(-\frac{nt^2}{32  (\kappa_S +{\mathfrak{c)}^2}}\right).$$
 In particular, 
 for every $t>0$ and  $n \in \mathbb{N}$ such that $nt \geq \ln d$, the following holds: 
$$\p\left(  \left| \ln \|R_n \|    - n\ell(\mu)\right|\geq nt \right) \leq 2d
\exp\left(-\frac{nt^2}{128  (\kappa_S+ \mathfrak{c})^2}\right).$$
\label{hoeffding-high-dimension}
\end{proposition}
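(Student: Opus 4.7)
The plan is to apply the general cocycle concentration result of Section \ref{sec.cocycle} to the norm cocycle $\sigma(g, [v]) := \ln(\|gv\|/\|v\|)$ on the projective space $\P(\C^d)$. The first step is to invoke the solution of the associated Poisson equation due to Benoist--Quint \cite{BQ.CLT}: under strong irreducibility and proximality, there exists a continuous function $\psi: \P(\C^d) \to [0,\infty)$ with
\[
\int \sigma(g, x)\, d\mu(g) - \ell(\mu) = \psi(x) - \int \psi(gx)\, d\mu(g),
\]
given by the explicit formula $\psi([v]) = \int \ln(\|v\|\|w\|/|\langle v, w\rangle|)\, d\nu^*([w])$, where $\nu^*$ denotes the unique $\mu^*$-stationary probability measure on $\P(\C^d)$. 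This formula yields the uniform bound $\|\psi\|_\infty \leq \mathfrak{c}(\mu^*) = \mathfrak{c}$, finite by the regularity properties of $\nu^*$ recalled before the statement; strong irreducibility and proximality of $\mu^*$ (needed for $\nu^*$ to exist) transfer from those of $\mu$ by taking adjoints.

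Next I would perform the Gordin-type decomposition. Setting $\tilde\sigma(g, x) := \sigma(g, x) - \ell(\mu) - \psi(x) + \psi(gx)$, the cohomological equation gives $\int \tilde\sigma(g, x)\, d\mu(g) = 0$ for every $x$, so that $(\tilde\sigma(X_i, R_{i-1}[v]))_{i \geq 1}$ is a martingale difference sequence with respect to the natural filtration. The telescoping identity reads
\[
\ln \frac{\|R_n v\|}{\|v\|} - n\ell(\mu) = \sum_{i=1}^n \tilde\sigma(X_i, R_{i-1}[v]) + \psi([v]) - \psi(R_n[v]).
\]
Since $|\sigma(g, [v])| \leq \kappa_S$ for $g \in S$ and $0 \leq \psi \leq \mathfrak{c}$, the martingale increments are bounded in absolute value by $2(\kappa_S + \mathfrak{c})$ almost surely, and Azuma--Hoeffding yields a subgaussian estimate with variance proxy $4n(\kappa_S + \mathfrak{c})^2$. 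The coboundary $|\psi([v]) - \psi(R_n[v])| \leq 2\mathfrak{c}$ is then absorbed by an elementary case split at $nt = 4\mathfrak{c}$: when $nt \geq 4\mathfrak{c}$, applying Azuma at the threshold $nt/2$ produces the exponent $-nt^2/(32(\kappa_S + \mathfrak{c})^2)$; when $nt < 4\mathfrak{c}$, the inequality $nt^2 \leq 16\mathfrak{c}^2$ forces the claimed right-hand side to exceed $1$, so the bound is vacuous.

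For the operator norm statement, I would compare $\|R_n\|$ to evaluations on any orthonormal basis $(e_i)_{i=1}^d$ via the elementary inequalities $\max_i \|R_n e_i\| \leq \|R_n\| \leq \sqrt{d}\, \max_i \|R_n e_i\|$, which give $|\ln \|R_n\| - \max_i \ln \|R_n e_i\|| \leq (\ln d)/2$. The lower tail then follows from the first inequality applied to $e_1$. For the upper tail, a union bound over $i = 1, \ldots, d$ applied to the first inequality at the shifted threshold $nt - (\ln d)/2$ produces the prefactor $2d$; the hypothesis $nt \geq \ln d$ ensures that this shifted threshold is at least $nt/2$, which doubles the constant in the denominator from $32$ to $128$.

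The main obstacle I expect is verifying that the Benoist--Quint solution $\psi$ meets the precise regularity and boundedness hypotheses required by the general cocycle concentration theorem of Section \ref{sec.cocycle}, and tracking the explicit dependence of $\|\psi\|_\infty$ on $\mathfrak{c}$ so as to recover the stated constants $32$ and $128$; the introduction already flags a mild adaptation of the BQ construction (\emph{``we slightly extend this solution to adapt it to our purposes''}). All other ingredients---Azuma--Hoeffding, the duality between $\mu$ and $\mu^*$, and the basis comparison---are standard.
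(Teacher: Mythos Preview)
Your approach is essentially the paper's own: it invokes the Benoist--Quint solution \eqref{eq.soln.matrix} of the Poisson equation and feeds it into the martingale-plus-coboundary decomposition underlying Azuma--Hoeffding (the paper packages this as Proposition~\ref{prop.cocycle}, proved via Proposition~\ref{prop.Markov}, whereas you inline the argument directly). One technical slip worth fixing: your telescoping identity $\sigma(R_n,[v]) = \sum_i \tilde\sigma(X_i, R_{i-1}[v]) + \text{coboundary}$ is false for the \emph{right} walk $R_n = X_1\cdots X_n$, since the cocycle unwinds as $\sum_i \sigma(X_i, X_{i+1}\cdots X_n[v])$, which is not adapted; the paper handles this by switching to the left walk $L_n = X_n\cdots X_1$ (using $L_n \overset{d}{=} R_n$) at the start of \S\ref{sec.cocycle}. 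Finally, your concern about adapting the BQ solution is slightly misplaced: the extension flagged in the introduction is needed only in the hyperbolic setting (Lemma~\ref{cohomological.compactification}), not for the norm cocycle, where \eqref{eq.soln.matrix} is used as-is.
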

In this result, the fact that we have subgaussian estimates for every $t>0$ small enough can also be deduced from the spectral gap result of Le Page \cite{page} using analytic perturbation methods. We also refer to \cite{BQ.CLT.linear,boyer} for exponential deviation estimates in a more general setting and to \cite[Ch.~5]{duarte-klein.book} for local concentrations that are uniform over small neighborhoods of irreducible cocycles.

\begin{remark}
Similarly to Corollary \ref{corol.LDP}, the estimate in Proposition \ref{hoeffding-high-dimension} allows one to obtain a global lower bound (less explicit in its constants compared to the aforementioned corollary) for the rate function of log-norms of random matrix products studied in \cite{sert.matrix.LDP, xiao-hui-grama.precise.ld}   (see also \cite[Corollary 4.17]{sert.matrix.LDP}). 
\end{remark}

We end the introduction by mentioning that\\
\textbullet ${}$ the methods we use to prove Theorem \ref{thm.main.with.lambda} allow us to provide an explicit lower bound for the bottom of the support of Hausdorff spectrum of the harmonic measure, equivalently, for the exponent with which the Frostman property holds (see \S \ref{subsec.log.track} and see also Tanaka \cite{tanaka.spectrum} for a thorough discussion of multifractal analysis of the harmonic measure in the particular case of hyperbolic groups);\\[2pt]
\textbullet ${}$ Theorem \ref{thm.main.with.lambda} itself has a direct application to the continuity of the drift (\S \ref{subsec.continuity});\\[2pt]
\textbullet ${}$  in view of Horbez's work \cite{horbez}, it seems possible that our results in \S \ref{sec.cocycle} can be used to obtain subgaussian concentration estimates in the setting of random walks on mapping class groups and on the group $Out(F_N)$ of outer automorphisms of a non-abelian free group.

\subsection*{Organization}
The article is organized as follows. In Section \ref{sec.cocycle}, we prove concentration estimates for a general cocycle that satisfies a certain cohomological equation (Proposition \ref{prop.cocycle}). In Section \ref{sec.non.explicit}, we deduce non-explicit concentration estimates for random matrix products in arbitrary dimension (Proposition \ref{hoeffding-high-dimension}) and for random walks on hyperbolic spaces (Proposition \ref{prop.hyp}). In Section \ref{sec.explicit}, we prove Theorem \ref{thm.main.with.lambda}. In Section \ref{sec.koubi.manu}, we prove Corollaries \ref{corol.Koubi} and \ref{corol.manu}. Finally in Section \ref{sec.tits}, we deduce Theorem \ref{thm.proba.tits} from Theorem \ref{thm.main.with.lambda}, a uniform positive lower bound on the drift (Proposition \ref{prop.drift.lower.bound}) and a general result estimating the likelihood of obtaining free subgroups from random walks based on uniform large deviation estimates (Proposition \ref{prop.ULD.tits}). 

\subsection*{Acknowledgements}
This article profited from helpful discussions with several people; the authors have the pleasure to thank R\'{e}mi Boutonnet, Emmanuel Breuillard, Yves Cornulier, Yves Guivarc'h, Andrea Sambusetti and Pierre Youssef. We also thank Samuel Taylor for helpful bibliographical suggestions. Finally, we are grateful to the anonymous referee for a careful reading of the article.

\section{Concentration inequalities for cocycles satisfying a Poisson equation}\label{sec.cocycle}
 
The goal of this section is to prove Proposition \ref{prop.cocycle} yielding concentration inequalities for values of a cocycle for which the associated Poisson equation has a bounded measurable solution. This result will provide the basis for the rest of the article where we will obtain more precise versions in the particular setups discussed in Introduction. We note that this section is inspired by the work of Furstenberg--Kifer \cite{furstenberg-kifer} of which it can be seen as a quantitative analogue under an additional assumption (see Remark \ref{rk.furstenberg-kifer}). 

We start by recalling some standard terminology. Let $G$ be a Polish group (endowed with the Borel $\sigma$-algebra) and $X$ a standard Borel space endowed with a measurable action of $G$. We shall refer to such a space as a $G$-space. A function $\sigma:G \times X \to \mathbb{R}$ is said to be an additive cocycle if it satisfies $\sigma(g_1g_2,x)=\sigma(g_1, g_2x)+ \sigma(g_2,x)$ for every $g_1,g_2 \in G$ and $x \in X$. All cocycles will supposed to be measurable. Given a probability measure $\mu$ on $G$, a probability measure $\nu$ on $X$ is said to be $\mu$-stationary if for every bounded measurable function $\phi$, we have $\int \int \phi(gx) d\mu(g) d\nu(x)=\int \phi(x) d\nu(x)$. We denote by $P_\mu$ the Markov operator acting on bounded measurable functions on $X$ by $P_\mu \phi(x)=\int \phi(gx) d\mu(g)$. Finally, denoting by $(X_i)_{i \in \N}$ a sequence of independent $G$-valued random variables with distribution $\mu$, we write $L_n$ for the left product $X_n\cdots X_1$. Although, $L_n$ and $R_n$ have the same distribution, it will be more convenient in this section to work with the left random walk $L_n$.


\begin{proposition}\label{prop.cocycle}
Let $G$ be a Polish group, $X$ a $G$-space and $\sigma: G\times X\to \R$ a bounded additive cocycle. Let $\mu$ be a probability measure on $G$ with support $S$. Denote by $$\kappa_S:=\sup\{\sup_{x\in X}|\sigma(g, x)| : g\in S\}.$$
Let $\nu$ be a $\mu$-stationary probability measure on $X$ and 
$$\ell(\mu):=\int_{G\times X}{\sigma(g,x) d\mu(g) d\nu(x)}.$$ 
Assume that the set $E$ of bounded measurable solutions $\psi$ of the Poisson equation 
\begin{equation}\psi(x) - P_{\mu}(\psi)(x)=\int_{G}{\sigma(g,x) d\mu(g)} - \ell(\mu).\label{cohomological}
\end{equation}
is non-empty and let $\mathfrak{c}:= \inf\{\|\psi\|_{\infty} : \psi\in E\}$. Then, for every $t>0$, $n\in \N$, and $x\in X$ we have
$$\p\left( |\sigma(L_n, x) - n\ell(\mu)|\geq nt \right) \leq 2\exp\left(-\frac{nt^2}{32  (\kappa_S +{\mathfrak{c)}^2}}\right).$$
\end{proposition}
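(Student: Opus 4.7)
The plan is to apply Gordin's martingale--coboundary decomposition: using a bounded measurable solution $\psi$ of the Poisson equation \eqref{cohomological}, I will realise $\sigma(L_n,x) - n\ell(\mu)$ as a bounded-increment martingale perturbed by a term of size at most $2||\psi||_\infty$, after which Azuma--Hoeffding delivers the desired subgaussian tail. Optimising over $\psi\in E$ then introduces $\mathfrak{c}$ in the denominator.

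Concretely, iterating the cocycle identity with $L_0=\operatorname{id}$ gives
\begin{equation*}
\sigma(L_n,x) - n\ell(\mu) \;=\; \sum_{k=1}^n \bigl[\sigma(X_k, L_{k-1}x) - \ell(\mu)\bigr].
\end{equation*}
Fix $\psi\in E$, let $\mathcal{F}_n$ denote the $\sigma$-algebra generated by $X_1,\ldots,X_n$, and set
\begin{equation*}
M_n \;:=\; \sigma(L_n,x) - n\ell(\mu) + \psi(L_n x),\qquad M_0=\psi(x).
\end{equation*}
Using $\sigma(L_n,x)-\sigma(L_{n-1},x)=\sigma(X_n,L_{n-1}x)$, the increment reads $M_n-M_{n-1}=\sigma(X_n,L_{n-1}x)-\ell(\mu)+\psi(L_nx)-\psi(L_{n-1}x)$, and its $\mathcal{F}_{n-1}$-conditional expectation equals
\begin{equation*}
\Bigl[\int\sigma(g,L_{n-1}x)\,d\mu(g)-\ell(\mu)\Bigr] + \bigl[P_\mu\psi(L_{n-1}x)-\psi(L_{n-1}x)\bigr],
\end{equation*}
which vanishes by \eqref{cohomological}. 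Hence $(M_n)$ is an $(\mathcal{F}_n)$-martingale.

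Next, since $X_n\in S$ almost surely one has $|\sigma(X_n,L_{n-1}x)|\leq\kappa_S$, and $|\ell(\mu)|\leq\kappa_S$, while $|\psi(L_nx)-\psi(L_{n-1}x)|\leq 2||\psi||_\infty$; therefore $|M_n-M_{n-1}|\leq 2(\kappa_S+||\psi||_\infty)$ a.s. Azuma--Hoeffding then yields, for every $s\geq 0$,
\begin{equation*}
\p(|M_n-M_0|\geq s) \;\leq\; 2\exp\!\left(-\frac{s^2}{8n(\kappa_S+||\psi||_\infty)^2}\right).
\end{equation*}
Since $\sigma(L_n,x)-n\ell(\mu) = (M_n-M_0)+\psi(x)-\psi(L_nx)$, the event $\{|\sigma(L_n,x)-n\ell(\mu)|\geq nt\}$ forces $|M_n-M_0|\geq nt-2||\psi||_\infty$. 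For $nt\geq 4||\psi||_\infty$ this is at least $nt/2$, which gives $\p(\,\cdot\,)\leq 2\exp\!\bigl(-nt^2/(32(\kappa_S+||\psi||_\infty)^2)\bigr)$.

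For the remaining range $nt<4||\psi||_\infty$, the deterministic inequality $|\sigma(L_n,x)-n\ell(\mu)|\leq 2n\kappa_S$ lets me restrict to $t\leq 2\kappa_S$ (otherwise the probability is zero). Then $nt^2 \leq 8\kappa_S||\psi||_\infty \leq 2(\kappa_S+||\psi||_\infty)^2$ by the AM--GM inequality, so the right-hand side of the claimed bound is at least $2e^{-1/16}>1$ and the inequality is trivial. Taking the infimum over $\psi\in E$ replaces $||\psi||_\infty$ by $\mathfrak{c}$ and completes the proof. The only delicate issue is matching the martingale deviation to the cocycle deviation uniformly in $t>0$, which the two-range split above precisely accounts for; everything else is a routine execution of Gordin's method.
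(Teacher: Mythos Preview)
Your proof is correct and follows essentially the same approach as the paper: Gordin's martingale--coboundary decomposition followed by Azuma--Hoeffding, with a two-range split to absorb the bounded coboundary term. The paper packages the argument by first passing to an auxiliary Markov chain on $G\times X$ and proving a general concentration statement for Markov chains (their Proposition~\ref{prop.Markov}), whereas you work directly with the martingale $M_n=\sigma(L_n,x)-n\ell(\mu)+\psi(L_nx)$; the underlying martingale and increment bound $2(\kappa_S+\|\psi\|_\infty)$ are the same in both. Your treatment of the small range $nt<4\|\psi\|_\infty$ via the deterministic bound $t\le 2\kappa_S$ and AM--GM is in fact a bit more careful than the corresponding step in the paper's proof of Proposition~\ref{prop.Markov}.
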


\begin{remark}\label{rk.furstenberg-kifer} 1. Our assumption \eqref{cohomological} implies that there is a unique cocycle average in the sense of \cite[\S 3]{BQ.CLT.linear}.\\[1pt]
2. This result can be seen as an abstract quantitative refinement of \cite[Theorem 2.1]{furstenberg-kifer} under the assumption that the expected increase function is cohomologous to a constant.
\end{remark}

The proof of the previous result is based on the following general probabilistic ingredient. We start by recalling some standard terminology on Markov chains. Let $M$ be a standard Borel space, $P$ a Markov operator on $M$, i.e.\ a measurable map $x \mapsto P_x$ from $M$ to the space of probability measures on $M$. This data naturally defines an operator on the space of bounded Borel functions on $M$ by $\phi \mapsto P\phi$, where $P \phi(x)= \int \phi(y) dP_x(y)$. Given $x \in M$, we denote by $\mathbb{P}_x$ the law of the Markov chain $(Z_n)_n$ on the space of trajectories, i.e.\ $M^\mathbb{N}$ and $\mathbb{E}_x$ the associated expectation operator. We say that a probability measure $\pi$ is invariant (or stationary) under the Markov operator $P$ if $\int P\phi d\pi = \int \phi d\pi$ for every bounded measurable function $\phi$ on $M$.

\begin{proposition}\label{prop.Markov}
Let $(Z_n)$ be a Markov chain on a standard Borel space $M$ associated to the Markov operator $P$. Let $\pi$ be a $P$-stationary probability measure on $M$. Let $f$ be a bounded measurable function on $M$. We assume that $f$ is cohomologous to $\int_{M} {f\,d\pi}$, i.e.~ there exists a bounded measurable solution $\phi$ of the equation:
\begin{equation}\label{cohomological-MC} \phi  - P\phi=f - \int_{M} {f\,d\pi}.
\end{equation}
Then, for every $t>0$, $n\in \N$ and $x \in M$, the following inequality holds 
$$\p_{x} \left(     \Big| \sum_{i=1}^n{f(Z_i)} - n \int_{M} {f\,d\pi} \Big|  \geq nt \right)   \leq  2 \exp\left(-\frac{n t^2}{32 \|\phi\|_{\infty}^2}\right).$$
\end{proposition}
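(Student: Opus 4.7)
The plan is to apply Gordin's classical martingale approximation method: the Poisson hypothesis \eqref{cohomological-MC} is precisely what is needed to rewrite the ergodic sum as a bounded martingale plus a small boundary term, after which Azuma--Hoeffding yields the desired subgaussian concentration.

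More concretely, fix a bounded measurable solution $\phi$ of \eqref{cohomological-MC} and, under $\p_x$, work with the natural filtration $\mathcal{F}_n=\sigma(Z_0,\ldots,Z_n)$ with $Z_0=x$. Define
$$D_i:=\phi(Z_i)-P\phi(Z_{i-1}), \qquad i\geq 1,$$
so that the Markov property gives $\mathbb{E}_x[\phi(Z_i)\mid\mathcal{F}_{i-1}]=P\phi(Z_{i-1})$; hence $(D_i)_{i\geq 1}$ is an $(\mathcal{F}_n)$-martingale difference sequence, and $|D_i|\leq 2||\phi||_\infty$ since $||P\phi||_\infty\leq ||\phi||_\infty$. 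Set $M_n:=\sum_{i=1}^n D_i$.

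Using \eqref{cohomological-MC}, the ergodic sum telescopes:
$$\sum_{i=1}^n\Bigl(f(Z_i)-\int f\,d\pi\Bigr)=\sum_{i=1}^n\bigl(\phi(Z_i)-P\phi(Z_i)\bigr)=M_n+P\phi(Z_0)-P\phi(Z_n),$$
where the boundary coboundary is bounded in absolute value by $2||\phi||_\infty$. Consequently, the event in the proposition is contained in $\{|M_n|\geq nt-2||\phi||_\infty\}$, and the Azuma--Hoeffding inequality applied to $M_n$ gives
$$\p_x\bigl(|M_n|\geq s\bigr)\leq 2\exp\!\Bigl(-\frac{s^2}{8n||\phi||_\infty^2}\Bigr).$$

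It remains to tune constants. In the range $nt\geq 4||\phi||_\infty$, choosing $s=nt-2||\phi||_\infty\geq nt/2$ yields the claimed bound $2\exp\bigl(-nt^2/(32||\phi||_\infty^2)\bigr)$. In the complementary range $nt<4||\phi||_\infty$, one checks that the exponent satisfies $nt^2/(32||\phi||_\infty^2)<1/(2n)\leq 1/2$, so the right-hand side of the claimed inequality exceeds $1$ and the bound is trivially true. There is no genuine obstacle here: the conceptual content is simply that the Poisson equation absorbs the non-martingale part of the ergodic sum into a bounded coboundary, and the constant $32$ in the exponent records the factor $4$ from $|D_i|\leq 2||\phi||_\infty$ in Azuma's inequality together with the loss $nt/2$ incurred in the threshold.
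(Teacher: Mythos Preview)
Your proof is correct and follows essentially the same approach as the paper: both use Gordin's martingale approximation via the Poisson equation, bound the martingale differences by $2||\phi||_\infty$, apply Azuma--Hoeffding, and handle the small-$n$ range by observing the bound is trivially $\geq 1$. The only cosmetic differences are an index shift in the definition of $D_i$ (the paper uses $\phi(Z_{i+1})-P\phi(Z_i)$ with coboundary $\phi(Z_1)-\phi(Z_{n+1})$, you use $\phi(Z_i)-P\phi(Z_{i-1})$ with coboundary $P\phi(Z_0)-P\phi(Z_n)$) and that the paper bounds the coboundary by $nt/2$ before applying Azuma at level $nt/2$, whereas you apply Azuma at level $nt-2||\phi||_\infty$ and then use $nt-2||\phi||_\infty\geq nt/2$; both lead to the same constant $32$.
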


\begin{remark}
1. Let $M$ be a compact metric space and $f: M\to \R$ a continuous function. Suppose, for simplicity, that the operator $P$ is Markov-Feller and that $f$ has a unique average $\ell_f:=\int f d\pi$ for all $P$-stationary probability measures $\pi$. Even though $f$ may not be cohomologous to the constant $\ell_f$, Furstenberg--Kifer \cite[Lemma 3.1]{furstenberg-kifer} showed that for every $t>0$, there exists a continuous 
function $h_{t}$ on $M$, cohomologous to $f$ and such that $\|h_{t}\|_{\infty}\leq \ell_f + t$. This can be used to show the exponential decay of $\p(|\sum_{i=1}^n{f(Z_i) - n \ell_f|>nt})$. This is a particular case of Benoist--Quint's \cite[Proposition 3.1]{BQ.CLT.linear}. In Proposition \ref{prop.Markov}, thanks to the stronger assumption \eqref{cohomological-MC}, one obtains subgaussian exponential decay with explicit constants. \\[2pt]
2. In some particular cases, powerful concentration inequalities exist for the sums of \emph{any} function along the Markov chain \cite{ gouezel-dedecker,Gillman}. They are not applicable here since our Markov chains are not geometrically ergodic. On the other hand, the particular requirement \eqref{cohomological-MC} on the function $f$ allows us to use the usual Hoeffding inequality for martingales and thereby deduce the previous concentration estimates in the generality of Markov chains that we consider.
\end{remark}

\begin{proof}[Proof of Proposition \ref{prop.cocycle}]
We start by defining the appropriate objects to which we will apply Proposition \ref{prop.Markov}. We take the standard Borel space $M$ to be $S \times X$ and $P$ the Markov operator defined  by 
$$Pf\left((g,x)\right) = \int_{G}  { f(\gamma, \gamma x)\, d\mu(\gamma)}$$
for every bounded measurable function $f$ on $M$.
The associated Markov chain $(Z_n)_{n\in \N}$ on $M$ starting from $Z_0=(e,x)$ is the process
$$Z_0=(e,x)\,,\, Z_1=(g_1, g_1\cdot x)\,,\, Z_2=(g_2, g_2 g_1 \cdot x)\,\cdots \,  Z_n=(g_n, L_n\cdot x), \cdots ,$$
where the $g_i$'s are iid random variables on $G$ with distribution $\mu$.
Let $\pi$ be the probability measure on $M$ defined by 
$$\int_{M}\,{f\,d\pi} := \iint_{S\times X} {f(g,g\cdot x)\, d\mu(g)\,d\nu(x)}$$
for every bounded measurable $f$ on $M$. Since $\nu$ is a  $\mu$-stationary, one readily checks that $\pi$ is stationary   for the Markov operator $P$.  Let now  $$f: M \longrightarrow \R, (g,x) \longmapsto f(g,x):=\sigma(g,g^{-1} \cdot x).$$
The following properties are immediate to check
\begin{itemize}
\item  Starting from  $Z_0=(e,x)$, we have $\sum_{i=1}^n {f(Z_i)} = \sigma(L_n,x)$, 
\item $\int_{M} f\,d\pi = \iint f(g,g\cdot x)\,d\mu(g)\,d\nu(x) = \iint \sigma(g,x) \,d\mu(g)\,d\nu(x)=\ell(\mu)$
\item $\|f\|_{\infty} \leq  \kappa_S$.
\end{itemize}
Finally, we check that if \eqref{cohomological} holds for some $\psi$, then \eqref{cohomological-MC} holds. 
Indeed, let 
$$\phi: M \longrightarrow \R, (g,x) \longmapsto   \phi(g,x):=  \psi(x)+f(g,x). $$ 
One readily checks that $P\psi=P_{\mu}\psi$  and $Pf(g,x)= \int_G  \sigma(g,x)\,d\mu(g)$. Thus, by \eqref{cohomological},  $\phi - P \phi=f - \int_M{f \,d\pi}$, and \eqref{cohomological-MC} is fulfilled. Since $\|\phi\|_{\infty}\leq \|\psi\|_{\infty}+\kappa_S$, Proposition \ref{prop.cocycle} follows from Proposition \ref{prop.Markov}. 
\end{proof}

\begin{proof}[Proof of Proposition \ref{prop.Markov}]
Let $\alpha:=\int_{M}{f\,d\pi}$ and $\phi$ as in the statement so that $f-\alpha = \phi - P \phi$. We write 
 
\begin{equation}
\sum_{i=1}^{n}{f(Z_i) - n \alpha} =\sum_{i=1}^{n}{\left[ 
\phi(Z_{i+1}) - P\phi(Z_i)\right]} + \left[ \phi(Z_1)-\phi(Z_{n+1})\right]. \label{decomposition} \end{equation}
On the one hand, the sequence $D_i:=  \phi(Z_{i+1}) - P\phi(Z_i)$ is a martingale difference sequence with respect to the canonical filtration of $(Z_i)_i$. Moreover, $|D_i|\leq 2 \|\phi\|_{\infty}$.  Thus  $M_n:=\sum_{i=1}^n{\left[ \phi(Z_{i+1}) - P\phi(Z_i)\right]}$ is a martingale with bounded differences. Applying Azuma--Hoeffding concentration inequality for martingales with bounded difference (see for instance \cite[Lemma 4.1]{McDiarmid}), we get that for every $t>0$ and $n\in \N$, \begin{equation}\label{azuma}\p\left( M_n \geq nt/2\right)\leq \exp \left(-\frac{nt^2}{32 \|\phi\|^2} \right)\,\,\,\textrm{and}\,\,\,
  \p\left( M_n \leq  -nt/2\right)\leq \exp \left(-\frac{nt^2}{32 \|\phi\|^2} \right).\end{equation}

On the other hand, the following crude upper bound holds for $V_n:=   \phi(Z_1)-\phi(Z_{n+1})$; for every $n\in \N$, we have $|V_n|\leq 2\|\phi\|_{\infty}$. Hence,  $|V_n|\leq nt/2$ for every $n\geq \frac{4\|\phi\|_{\infty}}{t}$. Combining this fact with \eqref{decomposition} and \eqref{azuma}, we get that for every $t>0$ and every $n\geq \frac{4\|\phi\|_{\infty}}{t}$, 

$$\p\left( \sum_{i=1}^n{f(Z_i)} - n\int_{M}{f\,d\pi} \geq nt\right)\leq \exp\left(-\frac{nt^2}{32 \|\phi\|_{\infty}^2}\right) $$ and 
$$
 \p\left( \sum_{i=1}^n{f(Z_i)} - n\int_{M}{f\,d\pi} \leq -nt\right)\leq \exp\left(-\frac{nt^2}{32 \|\phi\|_{\infty}^2}\right).$$
Thus $\p\left( |\sum_{i=1}^n{f(Z_i)} - n\int_{M}{f\,d\pi} | \leq  nt\right)\leq 2\exp \left(-\frac{nt^2}{32 \|\phi\|_{\infty}^2} \right)$. This shows the desired inequality in the case $n \geq \frac{1}{t} 4\|\phi\|_{\infty}$.  Suppose finally that 
$n  \leq \frac{1}{t}  4 \|\phi\|_{\infty}$. In this case, 
 $nt^2 \leq 16\|\phi\|^2 $ and then 
$\exp \left(-\frac{nt^2}{32\|\phi\|_{\infty}^2}\right)
\geq \exp(-1/2)>\frac{1}{2}$. The desired estimate  holds trivially in this case. 
\end{proof}

\section{Applications to random matrix products and random walks on hyperbolic spaces}\label{sec.non.explicit}
 
The goal of this section is to obtain two consequences of Proposition \ref{prop.cocycle} in the settings of random matrix products and random walks on hyperbolic spaces $M$. For the latter, in this section, we will not suppose any properness assumption, and relatedly, we are only able to obtain non-explicit concentration estimates. In \S \ref{sec.explicit}, we will upgrade those to more explicit estimates in the case of proper hyperbolic spaces.

\subsection{Subgaussian concentrations for random matrix products}\label{subsec.matrix}

Let $d \geq 1$ be an integer, we consider $\mathbb{C}^d$ endowed with the canonical Hermitian structure and $M_d(\C)$ with the induced operator norm. For simplicity, we denote by $\|.\|$ both norms on $\C^d$ and $M_d(\C)$. We denote by $X=P(\C^d)$ the projective space of $\C^d$ and we endow it with the standard metric given by
$$\delta([x], [y]):=\frac{\|x\wedge y\|}{\|x\| \|y\|},$$
where the norm $\|\cdot\|$ is the canonical norm on $\bigwedge ^2 \C^d$, $[x]=\C x$ and $[y]=\C y$.

A probability measure $\mu$ on $\GL_d(\C)$ is said to be
(strongly-)irreducible if the support $S$ of $\mu$ does not fix a (finite union of) non-trivial proper subspace(s) of $\C^d$. An irreducible probability measure $\mu$ is said to be proximal if the closure $\overline{\C G_\mu^+}$ in $M_d(\C)$ of the semigroup $G_\mu^+$ generated by the support of $\mu$ contains a rank-one linear transformation.

A probability measure $\nu$ on $X$ is said to be $\mu$-stationary if it is $\mu$-stationary for the Markov operator $P_\mu$ associated to $\mu$. We recall that for a strongly irreducible and proximal probability measure $\mu$ on $\GL_d(\C)$, there exists a unique $\mu$-stationary probability measure $\nu$ on $X$ \cite{Furstenberg.boundary,guivarch-raugi}. We denote by $\mu^*$ the image of $\mu$ under the map $g \mapsto g^*$, where $g^*$ denotes the conjugate-transpose of $\mu$ and by $\nu^*$ the unique-stationary measure of $\mu^*$ (which is also proximal and strongly irreducible).

We denote by $\sigma:\GL_d(\C) \times X \to \R $ the (additive) norm-cocycle given by $\sigma(g,[x])=\ln \frac{\|gx\|}{\|x\|}$.
The solution of the Poisson equation \eqref{cohomological} for the norm cocycle is closely related to regularity properties of the stationary measure $\nu$ on $X$. Indeed 
when $\mu$ has an exponential moment,    \eqref{cohomological}  can be solved using the result of Le Page \cite{page}  establishing a  spectral gap for the Markov operator $P_{\mu}$ acting on  some H\"{o}lder functions of $X$. As proved by Guivarc'h \cite{Guivarch3} this spectral gap property implies the H\"{o}lder regularity of $\nu$. When $\mu$ has a finite second order moment, Benoist--Quint \cite{BQ.CLT.linear} solved the same equation by using and proving the log-regularity of the stationary measure $\nu$. We will rely on their results.


By \cite{BQ.CLT.linear}, the following quantity  
\begin{equation}\label{eq.soln.matrix}
\psi([x]):=-\int{\ln  \frac{\|x\|\,\|y\|}{|\langle x, y \rangle|} \,d\nu^\ast([y])}
\end{equation}
is finite for every $x\in X$ and defines a continuous function $\psi$ on $X$. Moreover, $\psi$ satisfies the cohomological equation 
\begin{equation}\label{cohomological1}\psi - P_{\mu} \psi   =  \phi - \ell(\mu),\end{equation}
where $\phi([v]):=\int {\ln \frac{\|g v\|}{\|v\|}\, d\mu(g)}$, the expected increase at $[v]$. This fact plays the key role in the proof of the following result: 

\begin{proof}[Proof of Proposition \ref{hoeffding-high-dimension}]
We will apply Proposition \ref{prop.cocycle} with $G=\GL_d(\C)$,  $X=P(\C^d)$, and the norm-cocycle $\sigma:G\times X\to \R$. Observe that for every $g\in G$ and $x\in X$,
$$|\sigma(g,x)| \leq \max \left\{\ln \|g\|, \ln \|g^{-1}\| \right\}.$$
Furthermore, the equation \eqref{cohomological1} shows that the hypothesis \eqref{cohomological} of Proposition \ref{prop.cocycle} holds, and consequently, we deduce that for every $t>0$, $n\in \N$, $v\in \C^d\setminus \{0\}$ we have
\begin{equation}
\label{conc-norm-vector}\p\left( \left| \ln \frac{\|L_n v \|}{   \|v\|} - n\ell(\mu)\right|\geq nt \right) \leq 2\exp\left(-\frac{nt^2}{32  (\kappa_S +{\mathfrak{c)}^2}}\right),\end{equation}
where $\mathfrak{c}=\mathfrak{c}(\mu^\ast)=\sup_{[x] \in P(\C^d)}\psi([x])$. This proves the first estimate.  To get the concentration estimates for the matrix norm of $L_n$, consider the canonical basis $e_1, \cdots, e_n$ of $\C^d$. For every $g\in G$, we have 
$$\|g\|\leq \sqrt{d} \max\{\|g e_i\| : i=1, \cdots, d\}.$$

Thus 
\begin{equation}\label{inter}\p \left(  \ln \|L_n\|  - n \ell(\mu) \geq  n t \right) 
\leq d\,  \max_{i=1,\ldots,d}\p \left(  \ln \|L_n e_i\|  - n \ell(\mu) \geq n t - \frac{  \ln d}{2} \right).\end{equation}
Suppose  that $nt \geq   \ln d$. 
Then  $nt - (\ln d)/2 \geq \frac{nt}{2}$ and hence, 
by combining \eqref{conc-norm-vector} and \eqref{inter}, we get that 
 
$$\p\left(\left|  \ln \|L_n  \|    - n\ell(\mu)\right|\geq nt \right) \leq 2 d \exp\left(-\frac{nt^2}{128    (\kappa_S +{\mathfrak{c)}^2}}\right),$$
as claimed. 
\end{proof}

\subsection{Application to random walks on hyperbolic spaces}\label{subsec.hyp} The goal of this part is to deduce concentration estimates for non-elementary random walks on (not necessarily proper) geodesic hyperbolic spaces. 

The main tool is Proposition \ref{prop.cocycle} that we will apply to the horofunction compactification $\overline{M}^h$ and Busemann cocycle $\sigma$ of a separable geodesic hyperbolic metric space.
The key point in this application is to solve the cohomological equation \eqref{cohomological} in this setting. This was previously done by Benoist--Quint \cite{BQ.CLT.hyperbolic} when $M$ is proper; they gave a solution $\psi$ on $\partial_h M$. A partial extension of this solution to $\partial_h M$ was used by Horbez \cite{horbez} in the non-proper setting.
We will observe here that $\psi$ extends further to a solution on the full space $\overline{M}^h$; this will be more convenient for our purpose. 

Let us start by recalling some definitions. Let $(M,d)$ be a separable metric space and denote by $\Lip^1(M)$ the set of real valued Lipschitz functions on $M$ with Lipschitz constant 1, endowed with the  topology of pointwise convergence. Fixing $o \in M$, for $x \in M$, let the function $h_x \in \Lip_{o}^1(M)$, defined by $h_x(m)=d(x,m)-d(x,o)$, where $\Lip_{o}^1(M)$ is the subspace of $\Lip^1(M)$ consisting of functions $f$ satisfying $f(o)=0$. The closure of $\{h_x  : x \in M\}$ is a compact metrizable subset of $\Lip^1_o(M)$, called the \textit{horofunction compactification} of $M$ (see e.g.\ \cite[Proposition 3.1]{maher-tiozzo}). It will be denoted as $\overline{M}^h$. The map $x \mapsto h_x$ is injective on $M$ and we usually identify $M$ with its image in $\overline{M}^h$. The \textit{horofunction boundary} of $M$ is defined as $\partial_h M:=\overline{M}^h\setminus M$. The group of isometries $\Isom(M)$ acts on $\overline{M}^h$ by homeomorphisms given, for $g \in \Isom(M)$,  $h \in \overline{M}^h$ and $m \in M$, by $(g.h)(m)=h(g^{-1}m)-h(g^{-1}o)$. This extends equivariantly the isometric action of $\Isom(M)$ on $M$ and the set $\partial_h M \subset \overline{M}^h$ is invariant under $\Isom(M)$. The \textit{Busemann cocycle} $\sigma: \Isom(M) \times \overline{M}^h \to \mathbb{R}$ is defined by $$\sigma(g,h)=h(g^{-1}o).$$

Now, let $(M,d)$ be, moreover, a $\delta$-hyperbolic space. We recall that this means that for every $x,y,z,o \in M$,
\begin{equation}\label{eq.defining.eq}
(x|y)_o \geq (x|z)_o \wedge (z|y)_o -\delta,    
\end{equation}
where $(.|.)_.$ is the Gromov product given by $(x|y)_o=\frac{1}{2}(d(x,o)+d(y,o)-d(x,y))$. For simplicity, we will often omit the basepoint $o$ from the notation. We refer to \cite{hyperbolic-book} for general properties of these spaces. An element $\gamma \in \Isom(M)$ is said to be loxodromic if for any $x \in M$, the sequence $(\gamma^nx)_{n \in \Z}$ constitutes a quasi-geodesic (see \cite[Ch.\ 3]{hyperbolic-book}). Equivalently, $\gamma$ is loxodromic if and only if it fixes precisely two points $x_\gamma^+,x_\gamma^-$ on the Gromov boundary $\partial M$ of $M$ \cite[Ch.\ 9 \& 10]{hyperbolic-book}. Two loxodromic elements $\gamma_1,\gamma_2$ are said to be independent if the sets of fixed points $\{x^+_{\gamma_i} ,x^-_{\gamma_i}\}$ for $i=1,2$ are disjoint. Finally, a set $S$, or equivalently a probability measure with support $S$, is said to be non-elementary if the semigroup generated by $S$ contains at least two independent loxodromic elements.

For $h_1,h_2 \in \partial_h M$, we set
$$
(h_1 | h_2)_o=-\frac{1}{2} \inf_{m \in M} \left( h_1(m)+h_2(m) \right).
$$
This extends the usual Gromov product on $M$ based at $o \in M$ to $\partial_h M$. We note that $(h_1|h_2)_o=\infty$ if any only if $h_1$ and $h_2$ have the same projection to the Gromov boundary of $M$.


Let now $\mu$ be a non-elementary probability measure on $G$. 
There might exist several $\mu$-stationary probability measures on $\partial_h M$  but they all have the same Busemann cocycle average which is given by the drift of the $\mu$-random walk on $M$ i.e.\ for any $\mu$-stationary $\nu$ on $\partial_h M$, $\iint_{G\times \partial_h M}{\sigma(g,x)\,d\mu(g) d\nu(x)}=\ell(\mu)$ (see \cite[Proposition 3.3]{BQ.CLT.hyperbolic} or \cite[Corollary 2.7]{horbez}). Recall that $\mu$ is said to have a finite first order moment if $\int \kappa(g) d\mu(g)<\infty$ and that the convergence \eqref{eq.kingman} to the drift $\ell(\mu) \in \mathbb{R}$ is ensured under this moment assumption.

When $\mu$ has a finite second order moment (i.e.~ $\int{\kappa(g)^2 d\mu(g)}<+\infty$) and  $M$ is proper, 
Benoist--Quint showed that the function $\psi$ defined on $\partial_h M$ as 
\begin{equation}\label{eq.soln.hyp}
\psi(x):=-2 \int_{\partial_h M} {(x | y)_o \, d\check{\nu}(y)},
\end{equation}
is bounded, measurable, and it satisfies the Poisson equation 
\begin{equation}\label{eq.hyp.poisson}
\psi(x)-P_\mu\psi(x)=\int \sigma(g,x) d\mu(g)-\ell(\mu),
\end{equation}
where $\check{\nu}$ is any stationary probability measure on $\partial_h M$ for $\mu^{-1}$ and $\mu^{-1}$ is the non-elementary probability measure given by the image of $\mu$ by the map $g \mapsto g^{-1}$. 

For our purposes in the sequel, it will be more convenient to consider the action of the Markov operator $P_{\mu}$ on the space of bounded measurable functions defined on the whole compactification $\overline{M}^h$ in the case where $M$ is only a separable and geodesic hyperbolic space. Accordingly, we will verify that the natural extension of the function $\psi$ given by \eqref{eq.soln.hyp} to the space $\overline{M}^h$ yields a solution to the equation \eqref{eq.hyp.poisson}. We summarize these in the next 

\begin{lemma}\label{cohomological.compactification}
Suppose that $\mu$ has finite second order moment. 
The function $\psi: \overline{M}^h \to \mathbb{R}$ defined by 
\begin{equation}\label{eq.soln.hyp1}
\psi(x)=-2 \int_{\overline{M}^h}(x|y)_o d\check{\nu}(y) \end{equation}
is a bounded measurable function that satisfies the equation
\begin{equation}\label{eq.hyp.poisson1}
\psi(x)-P_\mu\psi(x)=\int \sigma(g,x) d\mu(g)-\ell(\mu)
\end{equation}
for every $x \in \overline{M}^h$.
\end{lemma}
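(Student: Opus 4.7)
The plan is to reduce to the already-known solution of the Poisson equation on $\partial_h M$ (due to Benoist--Quint \cite{BQ.CLT} in the proper case and extended by Horbez \cite{horbez} to the general separable setting), since the formula \eqref{eq.soln.hyp1} coincides on $\partial_h M$ with their formula \eqref{eq.soln.hyp}. The algebraic engine of the argument is the cocycle identity
\begin{equation*}
(g.x\,|\,g.y)_o = (x\,|\,y)_o + \tfrac{1}{2}\sigma(g,x) + \tfrac{1}{2}\sigma(g,y),\qquad x,y\in\overline{M}^h,\ g\in G,
\end{equation*}
which I would verify by a direct computation using $(g.h)(z)=h(g^{-1}z)-h(g^{-1}o)$ and the change of variables $z\mapsto g^{-1}z$ in the infimum defining the extended Gromov product. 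Measurability of $\psi$ follows from the separability of $M$: the map $(x,y)\mapsto (x|y)_o$ is Borel, being a supremum over $z$ ranging in a countable dense subset of the continuous maps $z\mapsto -\tfrac{1}{2}(x(z)+y(z))$.

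For the boundedness, the case $x\in\partial_h M$ is handled by \cite{BQ.CLT,horbez}. For $x=h_m$ with $m\in M$, the 1-Lipschitz property of any $y\in\overline{M}^h$ forces $d(m,z)+y(z)\geq y(m)$ for every $z\in M$, with equality at $z=m$; hence the infimum in the extended Gromov product is attained at $z=m$ and
\begin{equation*}
(h_m\,|\,y)_o = \tfrac{1}{2}(d(m,o)-y(m)) \in [0,\,d(m,o)].
\end{equation*}
To pass to a uniform bound, I would select, for each $m\in M$, a boundary point $x_0\in\partial_h M$ such that $m$ lies close to a geodesic from $o$ to $x_0$, constructible in the non-elementary setting using, for instance, axes of loxodromic elements in the semigroup generated by $\supp(\mu)$. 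A standard $\delta$-hyperbolicity comparison, built on the monotonicity of Gromov products along geodesics up to additive $\delta$-corrections, then gives $(h_m|y)_o\leq (x_0|y)_o+C_\delta$ for every $y\in\overline{M}^h$ and a constant $C_\delta$ depending only on $\delta$. Integrating against $\check{\nu}$ reduces the bound on $\psi(h_m)$ to the already-known bound on $\psi(x_0)$.

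For the Poisson equation, I would combine the $\mu^{-1}$-stationarity of $\check{\nu}$---which yields $\int f\,d\check{\nu}=\iint f(g^{-1}.y)\,d\check{\nu}(y)\,d\mu(g)$ for any bounded measurable $f$---with the cocycle identity to compute
\begin{equation*}
\psi(x)-P_\mu\psi(x) = 2\iint[(g.x|y)_o-(x|g^{-1}.y)_o]\,d\check{\nu}(y)\,d\mu(g) = \iint[\sigma(g,x)+\sigma(g,g^{-1}.y)]\,d\mu(g)\,d\check{\nu}(y).
\end{equation*}
A direct calculation from the definitions yields $\sigma(g,g^{-1}.y)=-y(go)=-\sigma(g^{-1},y)$, and a final application of stationarity identifies the second summand as $-\iint\sigma(h,y)\,d\mu^{-1}(h)\,d\check{\nu}(y)=-\ell(\mu^{-1})=-\ell(\mu)$, using the invariance of the drift under $g\mapsto g^{-1}$. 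The Fubini interchange is justified by the $L^\infty$ bound from the previous paragraph. The main obstacle is precisely that uniform bound on $M$: one must produce, for each $m\in M$, a useful boundary cluster point $x_0$ and then invoke a $\delta$-hyperbolicity comparison, all while avoiding any properness assumption on $M$ that would otherwise make cluster points trivially available.
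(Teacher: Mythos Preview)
Your derivation of the Poisson equation \eqref{eq.hyp.poisson1} is essentially the paper's: your cocycle identity is a rearrangement of the key formula $\sigma(g,x) = -2(x|g^{-1}y)_o + 2(gx|y)_o + \sigma(g^{-1},y)$ that the paper integrates against $d\check{\nu}(y)\,d\mu(g)$. You do gloss over the caveat that the identity only makes sense when $(gx|y)_o<\infty$; the paper handles this via non-atomicity of the harmonic measure on $\partial M$.

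The genuine gap is in your boundedness argument on $M$. You want, for each $m\in M$, a point $x_0\in\partial_h M$ with $m$ close to $[o,x_0]$, and propose building $x_0$ from axes of loxodromics. But in a general separable hyperbolic space an arbitrary $m$ need not lie near any ray from $o$, let alone one landing in the limit set of $\langle\supp(\mu)\rangle$: the space $M$ may be vastly larger than the $G$-orbit of $o$ or the convex hull of the limit set, so no loxodromic axis need come close to $m$. The paper sidesteps this entirely: rather than reducing to the known bound on $\partial_h M$, it reruns the Benoist--Quint machinery (large deviations for $\sigma$ on $\overline{M}^h$ $\Rightarrow$ summable tail $\sum_n\sup_{x\in\overline{M}^h}\check{\nu}\{y:(x|y)_o\geq an\}<\infty$ $\Rightarrow$ $\psi$ bounded) \emph{uniformly} over $x\in\overline{M}^h$, the uniformity coming from Lemma~\ref{lemma.horbez}, which extends the approximate relations between $(go|x)_o$, $(go|gx)_o$ and the cocycle from $x\in\partial_h M$ to all $x\in\overline{M}^h$. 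Your geometric idea can in fact be repaired, but not via loxodromics: take $x_0\in\supp(\check{\nu})\subset\partial_h M$ nearly realizing $\sup_{y}(h_m|y)_o$, split the integral over $\{y:(h_m|y)_o\leq(x_0|y)_o+O(\delta)\}$ and its complement, and control the complement by Markov's inequality applied to the already-bounded $\int(x_0|y)_o\,d\check{\nu}(y)$.
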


The proof requires the following slight extension of \cite[Lemma 2.4]{horbez}:
\begin{lemma}\label{lemma.horbez}
There exists $C>0$ depending only on the hyperbolicity constant $\delta$ of $M$ such that for all $g \in G$ and $x \in \overline{M}^h$, we have
\begin{equation}\label{eq.horbez1}
|(go | gx)_o -\frac{1}{2}(\kappa(g)+\sigma(g,x))| \leq C,
\end{equation}

\begin{equation}\label{eq.horbez2}
 |(go | x)_o -\frac{1}{2}(\kappa(g)-\sigma(g^{-1},x))|\leq C.   
\end{equation}
\end{lemma}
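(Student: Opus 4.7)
The plan is to show that the two inequalities in fact hold as exact equalities, so one may take $C$ to be any positive constant (trivially a function of $\delta$). The argument uses only the two facts that every $x \in \overline{M}^h$ is $1$-Lipschitz on $M$ and satisfies $x(o)=0$, both inherited from the construction of the horofunction compactification. In particular, no hyperbolicity of $M$ is actually used.

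The key elementary ingredient to establish first is: for any $1$-Lipschitz function $f: M \to \mathbb{R}$ and any $p \in M$,
\begin{equation*}
\inf_{m \in M}\bigl(d(p,m) + f(m)\bigr) = f(p).
\end{equation*}
The upper bound is obtained by substituting $m = p$; the lower bound follows from the $1$-Lipschitz estimate $f(m) \geq f(p) - d(p,m)$.

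For the second inequality \eqref{eq.horbez2}, I would apply this identity with $f = x$ and $p = go$, recalling that $\sigma(g^{-1}, x) = x(go)$ and $h_{go}(m) = d(go, m) - \kappa(g)$. Unwinding the definition of the extended Gromov product then yields
\begin{equation*}
(go | x)_o = -\tfrac{1}{2} \inf_{m}\bigl(h_{go}(m) + x(m)\bigr) = \tfrac{1}{2}\kappa(g) - \tfrac{1}{2}\,x(go) = \tfrac{1}{2}\bigl(\kappa(g) - \sigma(g^{-1}, x)\bigr).
\end{equation*}

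For the first inequality \eqref{eq.horbez1}, the strategy is to use the action formula $(g.x)(m) = x(g^{-1}m) - x(g^{-1}o)$, make the change of variables $n = g^{-1}m$ (legitimate since $g$ is an isometry), and apply the elementary identity above once more, this time with $p = o$. One gets
\begin{equation*}
\inf_{m}\bigl(d(go, m) + (g.x)(m)\bigr) = \inf_{n}\bigl(d(o,n) + x(n)\bigr) - x(g^{-1}o) = -\sigma(g, x),
\end{equation*}
using $x(o) = 0$ and $\sigma(g, x) = x(g^{-1}o)$. Substituting into the Gromov product yields $(go | gx)_o = \tfrac{1}{2}(\kappa(g) + \sigma(g, x))$. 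The only point requiring any care is the finiteness of the infima; this is automatic from $|\sigma(g^{\pm 1}, x)| \leq \kappa(g)$, itself a consequence of $1$-Lipschitzness and $x(o)=0$. Thus both inequalities hold as equalities, and the statement with any $C > 0$ follows a fortiori.
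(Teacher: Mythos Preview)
Your proof is correct and in fact establishes more than the lemma asserts: both relations are exact equalities, so one may take $C=0$. The argument rests only on the elementary observation that $\inf_{m\in M}\bigl(d(p,m)+f(m)\bigr)=f(p)$ for any $1$-Lipschitz $f$ and any $p\in M$, applied to the horofunction formula for the extended Gromov product. No hyperbolicity enters.

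This is a genuinely different and more economical route than the paper's. The paper treats three cases separately: for $x\in M$ it also obtains $C=0$ by direct expansion; for $x\in\partial_h^\infty M$ it invokes \cite[Lemma~2.4]{horbez}; and for $x\in\partial_h^f M$ it approximates $x$ by a nearby $h_y$ with $y\in M$ and imports a $\delta$-dependent constant from that approximation. Your argument sidesteps the case split entirely by working uniformly with the $1$-Lipschitz property of elements of $\overline{M}^h$, and shows that the constant $C$ is an artefact of the paper's detour through the cited result rather than an intrinsic feature. The paper's approach has the virtue of linking the lemma to existing literature, but yours is self-contained and sharper.
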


\begin{proof}
When $x \in M$, expanding the definitions, both inequalities are seen to hold with $C=0$. 
To treat the case when $x \in \partial_h M$, we follow  \cite[\S 3.2]{maher-tiozzo} and write the boundary $\partial_h M$ as the union of two $G$-invariant subsets $\partial^\infty_h M$ and $\partial^f_h M $, where $$
\partial_h^\infty M=\{h \in \partial_h M : \inf_{m \in M} h(m)=-\infty\}
$$
and $\partial_h^{f}M$ is defined  similarly by $\inf_{m\in M}{h(m)}>-\infty$. 
In case $x \in \partial_h^\infty M$, the statement is precisely \cite[Lemma 2.4]{horbez}. On the other hand, for $x=h \in \partial_h^f M$, it is not hard to see that for some constant $C>0$ depending only on $\delta$, the horofunction $h$ stays $C$-close to a horofunction $h_y$ for some $y \in M$ chosen in the coarse minimizer of $h$ (see \cite[\S 3.3]{maher-tiozzo}) i.e.\
\begin{equation}\label{eq.describe.finite}
|h(m)-h_y(m)|\leq C    
\end{equation}
for every $m \in M$. Therefore, when $x \in \partial_h^f M$, the inequalities \eqref{eq.horbez1} and \eqref{eq.horbez2} follow from the first case above where $x \in M$.
\end{proof}

\begin{proof}[Proof of Lemma \ref{cohomological.compactification}] 

The proof goes similarly as Benoist--Quint's proof \cite[Propositions 4.2 \& 4.6]{BQ.CLT.hyperbolic}; we indicate only the needed changes. Let $\check{\nu}$ be any $\mu^{-1}$-stationary probability measure on $\overline{M}^h$. 
We first show that $\psi(x)=-2 \int_{\overline{M}^h}(x|y)_o d\check{\nu}(y) $ is a bounded function  on the whole compactification $\overline{M}^h$. Arguing precisely as in the proof of \cite[Proposition 4.2]{BQ.CLT.hyperbolic} (namely, taking $p=2$ in the authors' proof), it suffices to show that there exists $a>0$ such that
\begin{equation}\label{eq.suffices.suffices}
\sum_{n \geq 1}\sup_{x\in \overline{M}^h}\check{\nu}\{y :  (x|y)_o\geq an\}<+\infty.    
\end{equation}
Therefore, we now focus on obtaining \eqref{eq.suffices.suffices}. First, any $\widetilde{\mu}$-stationary probability measure on $\overline{M}^h$ is supported on  $\partial_h^\infty M \subseteq \partial_h M$ (\cite[Proposition 4.4]{maher-tiozzo}) for $\widetilde{\mu} \in \{\mu,\mu^{-1}\}$.
Since by \cite[Corollary 2.7]{horbez}, the Busemann cocycle $\sigma: G\times \overline{M}^h\to \R$ has a unique cocycle average on the boundary $\partial_h M$, it follows that it has a unique cocycle average on all of the compactification $\overline{M}^h$.
Then, using Benoist--Quint's large deviation result for cocycles \cite [Proposition 3.2]{BQ.CLT.linear} applied to the  continuous Busemann cocycle on   the compact metrizable space $\overline{M}^h$, we deduce that for every $t>0$ and $\widetilde{\mu} \in \{\mu,\mu^{-1}\}$, we have
\begin{equation*}
\sum_{n \geq 1}{\sup_{\xi\in \overline{M}^h} {\widetilde{\mu}^{\ast n}\left \{g : |\sigma(g,\xi) - n \ell(\widetilde{\mu})| > n t\right\}}}<+\infty.
\end{equation*}
Now, using Lemma \ref{lemma.horbez} --- by substituting \eqref{eq.horbez1} for \cite[(2.17)]{BQ.CLT.hyperbolic} and \eqref{eq.horbez2} for \cite[(2.16)]{BQ.CLT.hyperbolic} --- and following the same strategy as in the proof of \cite[Lemma 4.5]{BQ.CLT.hyperbolic}, we deduce that there exists a summable sequence $(C_n)$ of constants and a constant $a>0$ such that for every $x,y\in \overline{M}^h$, 
$$
(\mu^{-1})^{\ast n}\{ g : (g y| x)_o \geq a n \}\leq C_n.
$$
By stationary of $\check{\nu}$, we have  $\check{\nu}\{y: (x|y)_o \geq a n\}=\int{ (\mu^{-1})^{\ast n}\{g : (g y|x)_o \geq a n \} d \check{\nu}(y)}$ for every $n \in \N$. This implies \eqref{eq.suffices.suffices} and shows that $\psi$ is bounded.

Finally, we check the Poisson equation \eqref{eq.hyp.poisson1}. We remark that the following key identity
$$\sigma(g, x) = -2(x|g^{-1}y)_o + 2(gx|y)_o+\sigma(g^{-1},y)$$
used by Benoist--Quint holds also true in our setting for every $g\in \Isom(M)$ and  $x,y \in \overline{M}^h$ provided $gx$ and $y$ do not project to the same point of the Gromov boundary $\partial M$. Since the unique $\mu^{-1}$-harmonic measure on $\partial M$ is non-atomic (\cite[Theorem 1.1]{maher-tiozzo}), we deduce   \eqref{eq.hyp.poisson1}  by integrating both sides of the previous identity with respect to $d\check{\nu}(y) d\mu(g)$ and using the fact that $\ell(\mu^{-1})=\ell(\mu)$.
\end{proof}

\begin{remark}
When $\mu$ has a finite exponential moment, one can substitute Maher's result on H\"{o}lder regularity of the harmonic measure \cite[Lemma 2.10]{maher.jlms} (see also \cite[Proposition 2.16]{BMSS}) for large deviation results of Benoist--Quint to prove that the function $\psi$ is bounded.
\end{remark}

\begin{proposition}\label{prop.hyp}
Let $(M, d)$ be a separable, geodesic, $\delta$-hyperbolic space, $o\in M$ and $\mu$ a non-elementary probability measure on the group $G= \Isom(M)$ with countable bounded support $S$. Denoting $\kappa_S=\sup_{g\in S} d(go,o)$ and  $\mathfrak{c}=-\inf_{x \in X}\psi(x)$ (see Lemma \ref{cohomological.compactification}), for every $t>0$ and $n\in \N$, we have
\begin{equation}\label{eq.prop-busemann}
\sup_{\xi\in \overline{M}^h}\p\left(    \left| \sigma(L_n, \xi)    - n\ell(\mu)\right|\geq nt \right) \leq 2  \exp\left(-\frac{nt^2}{32 (\kappa_S +{\mathfrak{c)}^2}}\right).
\end{equation}
In particular,  
\begin{equation}\label{eq.prop-hyp}
 \p\left(\left| \kappa(L_n)    - n\ell(\mu)\right|\geq nt \right) \leq 2  \exp\left(-\frac{nt^2}{32 (\kappa_S +{\mathfrak{c)}^2}}\right).
\end{equation}
\end{proposition}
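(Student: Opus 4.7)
The plan is to obtain both inequalities as immediate applications of the abstract cocycle concentration estimate Proposition \ref{prop.cocycle}, applied to the Polish group $G = \Isom(M)$, the $G$-space $X = \overline{M}^h$, and the Busemann cocycle $\sigma$. Two hypotheses of Proposition \ref{prop.cocycle} must be checked: the boundedness of $\sigma$ on $S \times \overline{M}^h$, and the existence of a bounded measurable solution to the Poisson equation \eqref{cohomological} whose sup-norm is controlled by $\mathfrak{c}$.

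For the first hypothesis, every $\xi \in \overline{M}^h$ is a pointwise limit of functions $h_x \in \Lip^1_o(M)$, and pointwise limits of $1$-Lipschitz functions vanishing at $o$ retain both properties. Hence
\[
|\sigma(g, \xi)| = |\xi(g^{-1}o) - \xi(o)| \leq d(g^{-1}o, o) = \kappa(g),
\]
giving $\sup_{g \in S}\sup_{\xi \in \overline{M}^h} |\sigma(g,\xi)| \leq \kappa_S$. For the second hypothesis, Lemma \ref{cohomological.compactification} supplies the explicit solution $\psi(x) = -2 \int_{\overline{M}^h}(x|y)_o\, d\check\nu(y)$, so one only needs to bound its sup-norm in terms of $\mathfrak{c}$. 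The key observation is that the extended Gromov product is non-negative on $\overline{M}^h \times \overline{M}^h$: since the representative $1$-Lipschitz functions vanish at $o$, evaluating $h_1+h_2$ at $m = o$ forces $\inf_m (h_1(m)+h_2(m)) \leq 0$, whence $(h_1|h_2)_o \geq 0$. Therefore $\psi \leq 0$ everywhere and
\[
\|\psi\|_\infty = -\inf_{x \in \overline{M}^h} \psi(x) = \mathfrak{c}.
\]
Plugging these two bounds into Proposition \ref{prop.cocycle} yields \eqref{eq.prop-busemann}.

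Finally, \eqref{eq.prop-hyp} follows by specializing \eqref{eq.prop-busemann} to the distinguished horofunction $\xi = h_o \in \overline{M}^h$, for which $h_o(m) = d(o,m)$. Indeed,
\[
\sigma(L_n, h_o) = h_o(L_n^{-1} o) = d(L_n^{-1} o, o) = \kappa(L_n),
\]
so \eqref{eq.prop-hyp} is simply the instance of \eqref{eq.prop-busemann} at this point. No substantive obstacle is expected in the execution; the one minor point of vigilance is the identity $\|\psi\|_\infty = \mathfrak{c}$ (rather than a larger constant arising from naive recentering), which relies on the one-sided non-negativity of the Gromov product on the full horofunction compactification and is what delivers the clean dependence on $(\kappa_S + \mathfrak{c})^2$ in the denominator.
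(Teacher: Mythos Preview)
Your approach is essentially identical to the paper's: apply Proposition \ref{prop.cocycle} to the Busemann cocycle on $\overline{M}^h$, invoke Lemma \ref{cohomological.compactification} for the Poisson solution, and specialize to $\xi=o$. Your explicit justification that $\|\psi\|_\infty=\mathfrak{c}$ via non-negativity of the extended Gromov product (using $h_1(o)+h_2(o)=0$) is a useful detail the paper leaves implicit.

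There is one technical slip. You apply Proposition \ref{prop.cocycle} to ``the Polish group $G=\Isom(M)$'', but in the non-proper setting the statement is not assuming $M$ proper, and $\Isom(M)$ need not be Polish in that generality---this is precisely the content of Remark \ref{rk.proper.case}, which says the countability hypothesis can be dropped \emph{only} when $M$ is proper. The paper handles this by taking $G=\langle \supp(\mu)\rangle$ endowed with the discrete topology; since the support is countable, this is a countable discrete group, hence trivially Polish, and $\mu$ is a Borel probability measure on it. This is the actual reason the countability hypothesis is present in Proposition \ref{prop.hyp}. With that adjustment your argument goes through unchanged.
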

  
\begin{proof}
In view of the inequality $|\sigma(g,\xi)| \leq \kappa(g)$ true for every $g\in G$ and $\xi\in \overline{M}^h$, the estimate \eqref{eq.prop-busemann} follows directly from  Proposition \ref{prop.cocycle} applied to $G=\langle \supp(\mu) \rangle$ the group generated by the support of $\mu$ endowed with the discrete topology, $X=\overline{M}^h$ and the Busemann cocycle $\sigma: G \times X \to \mathbb{R}$. Finally, \eqref{eq.prop-hyp} follows directly by specializing to $\xi=o$.  
\end{proof}

\begin{remark}[Proper case]\label{rk.proper.case}
The countability assumption in Proposition \ref{prop.hyp} is not needed when $M$ is proper. Indeed, it is known that in that case the full isometry group $G=\Isom(M)$ is Polish and hence we may apply Proposition \ref{prop.cocycle} for any (non-elementary) Borel probability measure $\mu$ on $G$ with bounded support.
\end{remark}

\section{Explicit estimates for random walks on proper hyperbolic spaces}\label{sec.explicit}

In \S \ref{subsec.main}, we prove our main result on concentration inequalities around the drift for random walks on proper hyperbolic spaces $M$. Exploiting the locally compact structure of $\Isom(M)$, the proof makes crucial use of the harmonic analytic and geometric approach and results of Benoist--Quint \cite[\S 5]{BQ.CLT.hyperbolic}. Respectively in \S \ref{subsec.log.track} and \S \ref{subsec.continuity}, we discuss the Frostman property of the harmonic measure and the continuity properties of the drift.

\subsection{Main result on concentrations}\label{subsec.main}

Let $(M,d)$ be a proper metric space, we denote by $G$ its group of isometries. It is a locally compact group \cite[Theorem 6]{Gao.Kechris} and we denote by $\mu_G$ a Haar measure on $G$. For a probability measure $\mu$ on $G$, $S$ denotes the support of $\mu$ which is the smallest closed subset whose $\mu$-mass equals one. We recall that for every $r \in [0,1)$, we denote by $\mu_{r,\lazy}=r \delta_{\id} + (1-r)\mu$. Having fixed a basepoint $o \in M$,  for an element $g \in \Isom(M)$, we write $\kappa(g)=d(go,o)$ and for a bounded set $S$, we set $\kappa_S:=\sup\{\kappa(g) : g\in S\}$.

The main result of this section is the following result, which immediately implies Theorem \ref{thm.main.with.lambda} by specializing to $\xi=o$. 
\begin{theorem}\label{thm.main.text}
Let $(M,d)$ be a proper geodesic hyperbolic space and $o\in M$. Assume that the group $\Isom(M)$ acts cocompactly on $M$. Then, there exists an explicit positive function $D(.,.)$ with $D(.,\lambda)<\infty$ for every $\lambda \in (0,1)$ such that for every  non-elementary probability measure $\mu$ on $G$ with bounded support $S$, for every $\xi \in \overline{M}^h$, $t>0$ and $n \in \mathbb{N}$, we have  
$$\p\left( |\sigma(L_n,\xi) - n \ell(\mu)|\geq nt \right) \leq 2
\exp\left(\frac{-nt^2}{ \kappa_S^2 D(\kappa_S,\|\lambda_G(\mu_{r,\lazy})\|_2)}\right)$$
for every $r \in [0,1)$.
\end{theorem}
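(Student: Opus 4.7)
The plan is to deduce Theorem \ref{thm.main.text} by specializing Proposition \ref{prop.cocycle} to the Busemann cocycle $\sigma : G \times \overline{M}^h \to \mathbb{R}$. By Lemma \ref{cohomological.compactification}, the function
$$\psi(x) = -2\int_{\overline{M}^h} (x|y)_o \, d\check\nu(y),$$
where $\check\nu$ is a $\mu^{-1}$-stationary probability measure on $\overline{M}^h$, is a bounded measurable solution of the associated Poisson equation; moreover $|\sigma(g,\xi)|\leq \kappa(g)$ uniformly in $\xi\in\overline{M}^h$. Hence Proposition \ref{prop.cocycle} immediately yields a concentration estimate with $32(\kappa_S+||\psi||_\infty)^2$ in the denominator of the exponent, exactly as in Proposition \ref{prop.hyp}. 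The whole task is therefore to produce an explicit upper bound on $||\psi||_\infty$ of the form
$$||\psi||_\infty \leq C\,\kappa_S\bigl(\ln^+\kappa_S + A_0\bigr)(1-\sqrt{\lambda})^{-2},$$
where $\lambda = ||\lambda_G(\mu_{r,\lazy})||_2$ and $A_0$ is the Haar-doubling term appearing in Remark \ref{rk.D}; substituting such a bound into Proposition \ref{prop.cocycle} then reproduces the explicit function $D$ of that remark.

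To bound $||\psi||_\infty$, I would apply the layer-cake formula
$$|\psi(x)|\leq 2\int_0^\infty \check\nu\bigl(\{y\in \overline{M}^h : (x|y)_o \geq s\}\bigr)\,ds$$
and reduce the task to a Frostman-type exponential decay of the $\check\nu$-mass of the sets $U_{x,s} := \{y : (x|y)_o \geq s\}$, uniform in $x\in \overline{M}^h$. Standard thin-triangle arguments identify $U_{x,s}$, up to an $O(\delta)$ thickening, with a horofunction shadow cast from $o$ by a ball of radius $R(\delta) = 14\delta+4$ centered at some $p\in M$ with $d(o,p) = s$. The cocompactness assumption furnishes $g_p\in G$ with $d(g_p o, p)\leq D_0 = \diam(G\backslash M)$, so that bounding $\check\nu(U_{x,s})$ reduces to estimating the probability that the random walk visits a Haar-bounded coset $g_p B_{R(\delta)+D_0}$ at a well-chosen time $n(s)\asymp s/\kappa_S$, the latter choice being guided by the positive-drift information available for non-elementary walks.

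The key harmonic analytic input, following the scheme of Benoist--Quint \cite[\S 5]{BQ.hyperbolic}, then converts these visit probabilities into powers of $\lambda$ via Kesten's inequality in the locally compact group $G$. Symmetrizing the driving measure and introducing the laziness ensure $\lambda<1$ even when $\mu$ is non-symmetric (cf.\ Remark \ref{rk.lazy.discussion}) and give an estimate of the form $\check\nu(U_{x,s})\leq C\,A_0\,\lambda^{n(s)}$ uniform in $x$, with $A_0$ absorbing the comparison of Haar volumes of the balls $B_{R(\delta)+D_0}$ and $B_{2R(\delta)+2D_0}$. Summing the resulting geometric series in $s$ produces the target bound on $||\psi||_\infty$, and inserting the bound into Proposition \ref{prop.cocycle} concludes the proof.

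The principal obstacle is carrying out the harmonic analytic step with all constants explicit and uniform in $\xi\in \overline{M}^h$. Three delicate points stand out: (i) reducing the abstract horofunction shadow $U_{x,s}$ to a shadow of a concrete metric ball in $M$ for every $x$, including finite horofunctions, which one handles using Lemma \ref{lemma.horbez} and its proof; (ii) the correct implementation of the lazy parameter $r$ so that the symmetrized operator has a quantitative spectral gap, together with the bookkeeping that produces the factor $(1-\sqrt\lambda)^{-2}$ in $||\psi||_\infty$ (squared into the $(1-\sqrt\lambda)^{-4}$ appearing in $D$ via a further symmetrization step); and (iii) ensuring that every implied constant depends only on the permitted data $\delta$, $D_0$, $A_0$ and $\kappa_S$, not on further unquantifiable features of $\mu$. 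Once these points are settled, Theorem \ref{thm.main.text} follows by direct substitution into Proposition \ref{prop.cocycle}.
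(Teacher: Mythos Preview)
Your overall strategy matches the paper's: deduce the theorem from Proposition \ref{prop.cocycle} (via Proposition \ref{prop.hyp}) by bounding $||\psi||_\infty$, and obtain the latter by a layer-cake argument reducing to uniform estimates on $\check\nu\{y:(x|y)_o\geq s\}$. However, the step you describe as ``reduces to estimating the probability that the random walk visits a Haar-bounded coset $g_p B_{R(\delta)+D_0}$ at a well-chosen time $n(s)\asymp s/\kappa_S$'' does not work as stated. The stationary measure of a shadow is \emph{not} controlled by a visit probability at a single time near a single point $p$: a trajectory $R_n(b)o$ converging into the shadow $U_{x,s}$ need not pass through $B(p,R(\delta))$ at all, since when the Gromov product $(R_n o|x)_o$ first reaches $s$ one may well have $\kappa(R_n)\gg s$, placing $R_n o$ far beyond $p$. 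Nor can ``positive-drift information'' be invoked to pin down $n(s)$ without circularity, since the quantitative concentration around $\ell(\mu)$ is precisely what is being proved.

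The paper closes this gap with two ingredients you omit. First, it uses the Furstenberg decomposition $\nu=\int\nu_b\,d\beta(b)$ together with the equivariance $R_n(b)_*\nu_{T^n b}=\nu_b$ to convert $\int(x|\xi)_o\,d\nu(\xi)$ into a sum $\sum_{n\geq 0}\kappa_S\,\sup_{y\in\partial_h M}\beta\bigl((x|R_n y)_o\geq\kappa(R_n)\bigr)$; this sum over \emph{all} times is the correct substitute for your ``single well-chosen time $n(s)$'' and is the content of the Fubini manipulation in Proposition \ref{quantitative-log-regularity}. Second, the event $\{(x|R_n y)_o\geq\kappa(R_n)\}$ is handled not by a single ball-visit but by the geometric Lemma \ref{lemma.BQ.key}: up to the alternative $\kappa(R_n)\geq c^n$, there is a set $C\subset M\times M$ of at most $c^{2n}$ pairs such that $d(R_n x',y')\leq R(\delta)$ for some $(x',y')\in C$. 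The union bound over these $c^{2n}$ pairs, combined with Lemma \ref{lemma.bq.5.2} giving $A_0\lambda^n$ per pair, is exactly what produces the factor $(1-\sqrt\lambda)^{-2}$ after optimizing $c\in(1,\lambda^{-1/2})$; your sketch misattributes this exponent to ``a further symmetrization step''. Once Proposition \ref{quantitative-log-regularity} is in hand, the substitution into Proposition \ref{prop.hyp} is indeed the one-line conclusion you describe.
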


With Proposition \ref{prop.hyp} at hand, the main ingredient for the proof of the Theorem \ref{thm.main.text} is the following

\begin{proposition}\label{quantitative-log-regularity}
Let $(M,d)$ be a proper geodesic $\delta$-hyperbolic space such that the group $G$ of isometries of $M$ acts cocompactly on $M$. Then, there exists an explicit positive function $C(.,.)$ with $C(.,\lambda)<\infty$ for every $\lambda \in (0,1)$ such that for every boundedly supported non-elementary probability measure $\mu$ on $G$, we have
\begin{equation}\label{eq.prop.regularity}
\sup_{y\in \overline{M}^h } \int_{\partial_h M} {(x\,| y)_o\,d\nu(x)}\leq C(\kappa_{S},\|\lambda_G(\mu_{r,\lazy})\|_2).
\end{equation}
for every $r \in [0,1)$.
\end{proposition}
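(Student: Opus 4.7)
The plan is to make effective the harmonic-analytic scheme of Benoist--Quint \cite[\S 5]{BQ.hyperbolic} that underlies Lemma \ref{cohomological.compactification}. A layer-cake identity
\begin{equation*}
\int_{\partial_h M} (x\,|\,y)_o \, d\nu(x) = \int_0^\infty \nu\{x\in\partial_h M : (x\,|\,y)_o > s\} \, ds
\end{equation*}
reduces the proposition to obtaining a uniform (in $y\in\overline{M}^h$) exponential upper bound for the shadow masses $\nu_{y,s}:=\nu\{x:(x\,|\,y)_o>s\}$.

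I first transfer these shadows to subsets of $G$. Using $\delta$-hyperbolicity together with Lemma \ref{lemma.horbez} to treat uniformly the three regimes $y\in M$, $y\in\partial_h^f M$ and $y\in\partial_h^\infty M$, the set $\{x:(x\,|\,y)_o>s\}$ is contained in the boundary footprint of a tube of fixed geometric radius around a geodesic segment of length $\simeq s$ starting at $o$. Cocompactness ensures that every point of $M$ lies within $D_0:=\operatorname{diam}(G\backslash M)$ of some $G$-translate of $o$; realising $\nu$ as the hitting distribution on $\overline{M}^h$ of the $\mu$-random walk and chopping the trajectory at the first time it enters the tube gives
\begin{equation*}
\nu_{y,s} \leq \sum_{n\geq 1}\mu^{*n}(A_{y,s}),
\end{equation*}
where $A_{y,s}\subset G$ is a bounded union of translates of the ball $B(e,R(\delta)+D_0)$ by elements $g\in G$ with $\kappa(g)\simeq s$. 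Only the terms $n\geq s/\kappa_S$ contribute.

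The crux is the spectral bound on each summand. Testing the convolution identity $\lambda_G(\mu)^n\phi=\mu^{*n}\ast\phi$ against a suitable nonnegative bump $\phi$ of unit $L^1$-norm supported near $e$ and using Cauchy--Schwarz produces an inequality of the shape
\begin{equation*}
\mu^{*n}(B(g_0,\rho)) \leq C(g_0,\rho)\,\|\lambda_G(\mu)\|_2^n,
\end{equation*}
valid for every $g_0\in G$ and $\rho>0$, where $C(g_0,\rho)$ is controlled by a Haar-volume ratio of the form $\mu_G(B(g_0,O(\rho)))^{1/2}/\mu_G(B(e,\rho))^{1/2}$. Replacing $\mu$ by the lazy measure $\mu_{r,\lazy}$ is harmless ($\nu$ is still $\mu_{r,\lazy}$-stationary, and non-elementarity guarantees $\|\lambda_G(\mu_{r,\lazy})\|_2<1$; cf.\ Remark \ref{rk.lazy.discussion}). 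Applying the spectral bound at scale $\rho=R(\delta)+D_0$ to each translate constituting $A_{y,s}$, together with the Haar-volume doubling property coming from cocompactness and quantified by the constant $A_0$ of Remark \ref{rk.D}, gives $\mu^{*n}(A_{y,s})\leq V(s)\,\|\lambda_G(\mu_{r,\lazy})\|_2^n$ with $V(s)$ polynomial in $s$. Summing over $n\geq s/\kappa_S$ yields exponential decay in $s$, and integrating in $s$ produces the announced constant $C(\kappa_S,\|\lambda_G(\mu_{r,\lazy})\|_2)$ in a form compatible with Remark \ref{rk.D}, the factors $(1-\sqrt{\|\lambda_G(\mu_{r,\lazy})\|_2})^{-k}$ arising from elementary integrals $\int s^{k-1}\lambda^{s/\kappa_S}\,ds$ combined with $\ln(1/\lambda)\geq 1-\sqrt{\lambda}$.

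The main obstacle is the bookkeeping required to coordinate the three scales at play --- the geometric scale $R(\delta)+D_0$ on which hyperbolicity controls shadows, the group-theoretic scale at which the bump-function argument is run, and the Haar-doubling scale --- so that the resulting constants stack as cleanly as in Remark \ref{rk.D}. Running the shadow analysis uniformly across the regimes $M$, $\partial_h^f M$, $\partial_h^\infty M$, for which Lemma \ref{lemma.horbez} is the essential ingredient, and cleanly relating the geometric radius of the tube around a geodesic segment to the Haar-radius of $A_{y,s}$, are the remaining technical points.
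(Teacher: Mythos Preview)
Your high-level plan (layer-cake plus spectral bound via the regular representation) matches the paper's, but the crucial bridge between the two---getting from $\nu\{x:(x|y)_o>s\}$ to a controllable sum of $\mu^{\ast n}$-masses of \emph{bounded} sets in $G$---is where your proposal and the paper diverge, and where your argument has a genuine gap.

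The first-hitting-time step does not work as written. You claim that if the walk converges to a boundary point $\xi_b$ with $(\xi_b|y)_o>s$, then the trajectory $R_n(b)o$ must at some time enter a tube of fixed radius around the initial segment of a geodesic towards $y$. But the random walk trajectory is not a quasi-geodesic: even though for large $n$ the \emph{geodesic} $[o,R_n(b)o]$ passes within $O(\delta)$ of the point $y_s$ at distance $s$ on $[o,y]$ (this is exactly what $(R_n(b)o|y)_o>s$ encodes), the point $R_n(b)o$ itself can stay arbitrarily far from $y_s$ at every time. So your set $A_{y,s}$, read literally as $\{g:go\text{ lies in the tube}\}$, need not be hit at all; and if you enlarge it to the set the first-crossing argument actually produces, namely $\{g:(go|y)_o\in(s,s+\kappa_S]\}$, that set is unbounded and cannot be covered by a polynomial-in-$s$ family of translates of a fixed ball. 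The spectral bound then gives nothing.

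The paper circumvents this by a different mechanism. Instead of stopping the walk spatially, it uses the Furstenberg decomposition $\nu=\int\nu_b\,d\beta(b)$ and the equivariance $\nu_b=R_n(b)_\ast\nu_{T^nb}$ to slice the layer-cake integral \emph{temporally}, along the random radii $\kappa_n(b)=\kappa(R_n(b))$. This reduces matters to bounding, uniformly in $\xi\in\partial_hM$, the quantity $\mu^{\ast n}\{g:(g\xi|y)_o\geq\kappa(g)\}$. The point is that the event compares the Gromov product to the \emph{random} threshold $\kappa(g)$ rather than to a fixed $s$, and this is exactly what makes the geometry tractable: Lemma \ref{lemma.BQ.key} then gives the dichotomy that either $\kappa(g)\geq D$, or $g$ carries one of at most $D^2$ fixed points within $R(\delta)$ of another, for any chosen scale $D$. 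Taking $D=c^n$, the first alternative is handled by the trivial bound $\kappa(R_n)\leq n\kappa_S$, and the second by the spectral estimate of Lemma \ref{lemma.bq.5.2}. The factor you call $V(s)$ is precisely the $D^2=c^{2n}$ coming from Lemma \ref{lemma.BQ.key}, not the length of any tube. So the missing ingredients in your sketch are the equivariance manoeuvre that replaces the fixed threshold $s$ by $\kappa(g)$, and the geometric Lemma \ref{lemma.BQ.key} that converts the resulting event into a finite union of return events.
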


\begin{remark}\label{rk.explicit.C}
1. The function $C(\kappa,\lambda)$ is a function of $\kappa>0$ and $\lambda \in (0,1]$ and, for $\lambda<1$, it can be given by 
$$
\kappa \inf_{1<c<\lambda^{-1/2}} \left[4\left(\frac{\ln \kappa}{\ln c} \vee \frac{1}{(\ln c)^2}\right)+  \frac{A_0}{1-c^2 \lambda}\right],
$$
where $A_0=\left(\frac{\mu_G(B_{2R(\delta)+2D_0})}{\mu_G(B_{R(\delta) +D_0})}\right)^{1/2}$ with $R(\delta)=14\delta+4$, for $r \geq 0$, $B_r:=\{g \in G : d(go,o) \leq r\}$ and $D_0$ is the diameter $2\sup_{x,y\in M}\inf_{g\in G}{d(gx,y)}$, which is finite by the cocompactness assumption. We set $C(\kappa,1)=\infty$. Finally, for concreteness, for $\lambda<1$, specializing to $c=(1+\lambda^{-1/2})/2$, one can get 
$$ C(\kappa,\lambda) \leq \kappa \left( 8\ln^+ (\kappa)+4A_0/3+16 \right) \frac{1}{(1-\sqrt{\lambda})^2}.$$
2. One can obtain a version of \eqref{eq.prop.regularity} with finite first order moment assumption replacing $\kappa_S$ with the first order moment. However, we will not need this more general version.
\end{remark}

\begin{remark}[Why we also consider lazy random walks]\label{rk.lazy.discussion}
Denoting by $\rho(\mu)$ the spectral radius of $\lambda_G(\mu)$, we have $\|\lambda_G(\mu)\|_2=\sqrt{\rho(\mu \ast \mu^{-1})}$, where $\mu^{-1}$ the image of $\mu$ by the map $g \mapsto g^{-1}$. When $\mu$ is symmetric, $\|\lambda_G(\mu)\|_2=\rho(\mu)$. Moreover, thanks to \cite[Theorem 4]{berg-christensen}, $\|\lambda_G(\mu)\|_2<1$ as soon as $\mu \ast \mu^{-1}$ is non-elementary and hence in this case, we may take $r=0$ on the right-hand-side of the the inequality given by the previous result. Finally, for every non-elementary probability measure $\mu$, for every $r>0$, $\mu_{r,\lazy}$ satisfies the previous property and hence for $r \in (0,1)$, $\|\lambda_G(\mu_{r,\lazy})\|_2<1$. We refer to \cite[p177]{berg-christensen} for an example of a non-elementary probability measure $\mu$ with $\|\lambda_G(\mu)\|_2=1$.
\end{remark}


For the proof, we require the following version of \cite[Lemma 5.2]{BQ.CLT.hyperbolic} where we highlight  the constants that appear in the aforementioned lemma for our purposes. This is the crucial harmonic analytic ingredient of the proof where the additional hypothesis (compared to Proposition \ref{prop.hyp}) on the cocompactness action of $G$ on the space $M$ is used.

\begin{lemma}\label{lemma.bq.5.2}
Let $(M,d)$ be a proper metric space. Suppose that the group $G$ of isometries of $(M,d)$ acts cocompactly on $M$. Then, there exists a constant $D_0>0$ depending only on $M$ such that for every probability measure $\mu$, for every $R>0$, $n \geq 1$ and $m,m' \in M$, we have
\begin{equation}\label{estime.1}
\mathbb{P}(d(R_n m,m') \leq R-D_0) \leq A_0(R) \|\lambda_G(\mu)\|_2^n,
\end{equation}
where $A_0(R)=\left(\frac{\mu_G(B_{2R})}{\mu_G(B_{R})}\right)^{1/2}$.
\end{lemma}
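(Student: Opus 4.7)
The plan is to carry out a Kesten-type argument based on the operator-norm bound $\|\lambda_G(\mu^{*n})\|_2 \leq \|\lambda_G(\mu)\|_2^n$, with cocompactness used only to translate a distance condition on $M$ into a containment statement in $G$.

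The first step is a purely geometric reduction. We set $D_0 := 2\diam(G\backslash M)$, which is finite by cocompactness, and for given $m,m' \in M$ we pick $g_m, g_{m'} \in G$ with $d(g_m o, m), d(g_{m'} o, m') \leq D_0/2$. A triangle inequality applied via the isometry $R_n$ then shows that the event $\{d(R_n m, m') \leq R - D_0\}$ is contained in $\{R_n \in g_{m'} B_R g_m^{-1}\}$, so it suffices to bound $\mu^{*n}(A)$ for the set $A := g_{m'} B_R g_m^{-1}$.

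The second step is the Kesten-type estimate applied with an asymmetric pair of test functions. We apply $|\langle \lambda_G(\mu^{*n}) 1_U, 1_V\rangle| \leq \|\lambda_G(\mu)\|_2^n \sqrt{\mu_G(U)\, \mu_G(V)}$ with $U := g_m B_{2R}$ and $V := g_{m'} B_R$; by left-invariance of Haar measure one has $\mu_G(U) = \mu_G(B_{2R})$ and $\mu_G(V) = \mu_G(B_R)$, and Fubini unfolds the pairing into $\int \mu_G(gU \cap V)\, d\mu^{*n}(g)$. The geometric heart of the argument is that for $g \in A$, writing $g = g_{m'} b g_m^{-1}$ with $b \in B_R$, the symmetry $B_R^{-1} = B_R$ together with the triangle inequality $B_R \cdot B_R \subseteq B_{2R}$ gives $g^{-1} V \subseteq U$; hence $V \subseteq gU$ and $\mu_G(gU \cap V) = \mu_G(V) = \mu_G(B_R)$. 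Integrating this lower bound over $A$ and combining with the spectral bound yields $\mu_G(B_R) \cdot \mu^{*n}(A) \leq \|\lambda_G(\mu)\|_2^n \sqrt{\mu_G(B_{2R})\, \mu_G(B_R)}$, which rearranges into the claimed inequality with $A_0(R) = \sqrt{\mu_G(B_{2R})/\mu_G(B_R)}$.

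The one subtle point I anticipate is the deliberately asymmetric choice of radii, $2R$ for $U$ versus $R$ for $V$: a symmetric pair of test balls would only produce $A_0(R)^2$ in place of the claimed $A_0(R)$. The square-root improvement rests on the \emph{tight} inclusion $V \subseteq gU$, valid for every $g \in A$, rather than a mere non-empty intersection of positive measure, and this is what dictates the $(2R, R)$ mismatch between the two test balls. Everything else is triangle-inequality bookkeeping, Fubini, and the submultiplicativity of the regular-representation norm.
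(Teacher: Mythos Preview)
Your proof is correct and follows the same Kesten-type argument that the paper invokes by reference to Benoist--Quint \cite[Lemma 5.2]{BQ.hyperbolic}: reduce via cocompactness to $\mu^{\ast n}(g_{m'}B_R g_m^{-1})$, then pair $\lambda_G(\mu)^n 1_U$ against $1_V$ and use the inclusion $g^{-1}V\subseteq U$ for $g$ in that set. The paper only sketches the two tweaks to the cited proof (taking $C_0=1$, $a_0=\|\lambda_G(\mu)\|_2$, and inserting the $D_0/2$ translation), whereas you spell the whole argument out, including the reason the asymmetric $(2R,R)$ pair of test balls yields the square root $A_0(R)$ rather than $A_0(R)^2$; this is exactly the mechanism underlying the Benoist--Quint proof.
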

\begin{proof}
The proof follows similarly as \cite[Lemma 5.2]{BQ.CLT.hyperbolic}, we only indicate the necessary modifications.\\
\textbullet ${}$ We replace the estimate $\|\lambda_G(\mu)^n\| \leq C_0a_0^n$ used in the proof of \cite[Lemma 5.2]{BQ.CLT.hyperbolic}, by $\|\lambda_G(\mu)^n\| \leq \|\lambda_G(\mu)\|^n$, and consequently, the constants $C_0$ and $a_0$ in \cite[Lemma 5.2]{BQ.CLT.hyperbolic} can, respectively, be taken to be $1$ and $\|\lambda_G(\mu)\|$.\\
\textbullet ${}$ Regarding the constant $A_0$ in \cite[Lemma 5.2]{BQ.CLT.hyperbolic}, in their proof, Benoist--Quint assume that $m$ and $m'$ belongs to the same $G$-orbit. However, since the action of $G$ on $M$ is cocompact, there exists $D_0$ such that for every $x,y$, there exists $g \in G$ with $d(gx,y) \leq D_0/2$ and this allows us to take $A_0=\left(\frac{\mu_G(B_{2R})}{\mu_G(B_{R})}\right)^{1/2}$.
\end{proof}

We will need the following geometric lemma, which is an adaption of Inclusion (5.5) in the proof of  \cite[Lemma 5.3]{BQ.CLT.hyperbolic} to the horofunction compactification. We point out that this is the key point where the geometric assumption of hyperbolicity is used. 

\begin{lemma}\label{lemma.BQ.key}
Let $(M,d)$ be a proper geodesic hyperbolic metric space and $o \in M$. Then, there exists $R(\delta)>0$ such that for any $\xi \in \partial_h M$, $y\in \overline{M}^h$, and constant $D > 0$, there exists a finite subset $C\subset M\times M$ with at most $D^2$ elements such that for any $g\in \Isom(M)$, there exists $(x',y')\in C$ such that
$$(g \xi| y)_o\geq \kappa(g) \Longrightarrow [\kappa(g)\geq D]
\vee [d(gx', y')\leq R(\delta)].$$
Moreover, all elements constituting the tuples in $C$ are contained in a ball of radius $D$ around $o$.
\end{lemma}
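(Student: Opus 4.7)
The plan is to adapt the proof of Inclusion (5.5) in \cite[Lemma 5.3]{BQ.hyperbolic} to the full horofunction compactification and to extract the sharp cardinality count $|C| \leq D^2$. Using properness of $M$ and Arzel\`{a}--Ascoli, I select a geodesic $\gamma_\xi : [0, T_\xi) \to M$ starting at $o$: a ray converging to the corresponding Gromov boundary point if $\xi$ lies in the infinite-type part $\partial_h^\infty M$, or the segment from $o$ to a representative $x_\xi \in M$ satisfying $\sup_{m \in M} |\xi(m) - h_{x_\xi}(m)| \leq C_0(\delta)$ if $\xi$ is of finite type (the decomposition used in the proof of Lemma \ref{lemma.horbez}). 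Choose $\gamma_y : [0, T_y) \to M$ analogously for $y \in \overline{M}^h$ (the segment $[o,y]$ when $y \in M$). Put $x_a := \gamma_\xi(\min(a, T_\xi - 1))$ and $y_b := \gamma_y(\min(b, T_y - 1))$ for $0 \leq a, b \leq \lfloor D \rfloor - 1$, and set $C := \{(x_a, y_b)\}_{a,b}$. Then $|C| \leq D^2$ and every coordinate lies in the ball of radius $D$ around $o$.

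Now fix $g \in \Isom(M)$ with $\kappa(g) < D$ and $(g\xi | y)_o \geq \kappa(g)$; I must exhibit $(x',y') \in C$ with $d(gx', y') \leq R(\delta)$. Set $T := (g\xi | go)_o$; the four-point inequality forces $T \in [0, \kappa(g) + O(\delta)]$. Take $a := \lfloor \kappa(g) - T \rfloor$ and $b := \lfloor T \rfloor$; because $\kappa(g) < D$ both indices lie in $\{0, 1, \ldots, \lfloor D \rfloor - 1\}$, so $(x_a, y_b) \in C$. To estimate $d(gx_a, y_b)$, pick any geodesic ray $\alpha$ from $o$ to $g\xi$ and combine two standard $\delta$-hyperbolic facts. \emph{(i)}~From $(g\xi | y)_o \geq \kappa(g) \geq T$ and the fellow-traveling criterion, $d(\alpha(T), \gamma_y(T)) \leq 2\delta$. \emph{(ii)}~The rays $\alpha$ (from $o$) and $g\gamma_\xi$ (from $go$) converge to the common boundary point $g\xi$, and $T = (g\xi | go)_o$ is by definition the approximate distance from $o$ to their branch point; a thin-triangle argument gives $d(\alpha(T), g\gamma_\xi(\kappa(g) - T)) \leq O(\delta)$. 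Two triangle inequalities plus the $1$-Lipschitz control arising from rounding $\kappa(g) - T$ and $T$ down to integers bound $d(g\gamma_\xi(a), \gamma_y(b))$ by a universal constant depending only on $\delta$, matching, with enough margin for the additive slacks, the value $R(\delta) = 14\delta + 4$ declared in Remark \ref{rk.D}.

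The main obstacle is the clean execution of the branch-point estimate in \emph{(ii)}, together with the corner cases where $\xi$ or $y$ is of finite type in $\partial_h M$ (or $y \in M$). In those corner cases the relevant $\gamma$ is a segment rather than a ray, so one must verify that the integer indices $a, b$ do not run past its length; this is exactly ensured by $T \leq \kappa(g) + O(\delta)$ combined with the hypothesis $\kappa(g) < D$. For finite-type $\xi$ (and analogously for $y$), one replaces $\xi$ by $h_{x_\xi}$ via the $C_0(\delta)$ approximation recorded in Lemma \ref{lemma.horbez}, reducing to the elementary case $\xi \in M$ at the cost of an additional $O(\delta)$ absorbed into the final constant $R(\delta)$.
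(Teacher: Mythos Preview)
Your proposal is correct and follows essentially the same route as the paper: define $C$ from integer points on geodesics $[o,\pi_\xi]$ and $[o,\pi_y]$, then for $g$ with $\kappa(g)<D$ use the thin-triangle lemma on the triangle $(o,go,g\xi)$ together with the fellow-traveller lemma for the rays $[o,g\pi_\xi]$ and $[o,\pi_y]$, and triangle-inequality the two estimates. The only cosmetic difference is that the paper introduces an auxiliary discretised ray $[o,g\pi_\xi]$ (your $\alpha$) and phrases the conclusion existentially (``there exist $i_0,j_0$''), whereas you name the indices explicitly via $T=(g\xi\,|\,go)_o$; one small slip is that your constant in (i) should be $8\delta$ rather than $2\delta$ (cf.\ Lemma~\ref{lemma.fellow.travel}), but this is absorbed in the final $R(\delta)=14\delta+4$.
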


As it is will be shown, one can take the constant $R(\delta)=14\delta +4$.

The proof will require some juggling between  horofunction boundary $\partial_h M$ and Gromov boundary $\partial M$ to construct the set $C$. We therefore start by recalling some standard facts on the relation between $\partial_h M$ and $\partial M$.

First, there exists a natural $G$-equivariant surjective map from $\overline{M}^h$ to $M \cup \partial M$. Namely, given $h \in \partial_h M$, for any sequence $x_n \in M$ such that $h_{x_n} \to h$, the sequence $x_n$ Gromov converges to infinity in the sense that $\inf_{m,n \geq k} (x_n,x_m) \to \infty$ as $k \to \infty$. When we endow $M \cup \partial M$ with the usual topology \cite[Ch.\ 2]{hyperbolic-book}, this projection is the unique map that continuously extends the identity map $m \mapsto m$ on $M$. For $y \in \overline{M}^h$, we denote by $\pi_y$ its image in $M \cup \partial M$. 

Given two points $\pi_x \neq \pi_y \in \partial M$, by using the defining inequality \eqref{eq.defining.eq} of a $\delta$-hyperbolic space, one checks that for any pair of pair of sequences $x_n,x'_n$ and $y_n,y_n'$ that Gromov converge to infinity and that are, respectively, in the equivalence class of $\pi_x$ and $\pi_y$, we have 
\begin{equation}\label{eq.upto.2delta}
\limsup_{n \to \infty} (x'_n|y'_n)_o -\liminf_{n \to \infty}(x_n|y_n)_o\leq 2\delta.
\end{equation}

For reader's convenience, we single out two basic geometric properties that are used in the proof of the previous lemma. 

\begin{lemma}[Thin triangles]\label{lemma.tt}
Given a triple $(x,y,z)$ with $x,y \in M$ and $z \in \partial M$, fix geodesic rays $[x,y], [x,z]$ and $[y,z]$. Then, there exist $a \in [x,y]$, $b \in [x,z]$ and $c \in [y,z]$ 
with the following property: denoting the corresponding geodesic rays oriented from $i$ to $j$ by $\gamma_{ij}$, where $i,j\in \{x,y,z,a,b,c\}$, we have for every $t \geq 0$ in the respective interval of definition, all distances $d(\gamma_{xb}(t),\gamma_{xa}(t)),d(\gamma_{ya}(t),\gamma_{yc}(t))$ and $d(\gamma_{bz}(t),\gamma_{cz}(t))$ are $\leq 6\delta$. 
\end{lemma}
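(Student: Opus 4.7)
The plan is to reduce the ideal-triangle statement to the standard thin-triangle property for \emph{finite} geodesic triangles by approximating $z$ along the fixed ray $[x,z]$, and then to pass to a limit. First, pick an exhaustion $z_n \in [x,z]$ with $z_n \to z$ and, for each $n$, fix a geodesic segment $[y, z_n]$. The finite geodesic triangle $T_n$ with vertices $x, y, z_n$ admits the standard inscribed tripod: there are points $a_n \in [x,y]$, $b_n \in [x, z_n] \subset [x,z]$, $c_n \in [y, z_n]$ with
$$d(x, a_n) = d(x, b_n) = (y \mid z_n)_x, \quad d(y, a_n) = d(y, c_n) = (x \mid z_n)_y, \quad d(z_n, b_n) = d(z_n, c_n) = (x \mid y)_{z_n},$$
and such that the three pairs of corresponding geodesic segments between each vertex and the two nearby tripod points fellow-travel at equal speed within a universal multiple of $\delta$ (this is the standard consequence of hyperbolicity in the sense of \eqref{eq.defining.eq}).

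Next, I would pass to the limit as $n \to \infty$. Since $z \in \partial M$ while $x, y \in M$, the two Gromov products $(y \mid z_n)_x$ and $(x \mid z_n)_y$ remain bounded and converge to the corresponding Gromov products of the ideal point $z$ with $x$ and $y$, while $(x \mid y)_{z_n} \to \infty$. By continuity of geodesics on compact initial segments, $a_n \to a \in [x,y]$ and $b_n \to b \in [x,z]$, and these limits do not depend on the subsequence. The point $c_n$ lies at bounded distance $(x \mid z_n)_y$ from $y$ on $[y, z_n]$; applying the standard fellow-traveler estimate for two geodesic rays issued from $y$ whose endpoints are getting close to the common ideal endpoint $z$, the segments $[y, z_n]$ stay uniformly close to the fixed ray $[y,z]$ on any prescribed compact initial range. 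Consequently, $c_n$ converges, up to an error of order $\delta$, to the point $c \in [y, z]$ at distance $(x \mid z)_y$ from $y$.

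Finally, the three tracking estimates of $T_n$ pass to the limit. The pair $(\gamma_{xa}, \gamma_{xb})$ inherits the estimate from $T_n$ directly because both segments lie entirely in the bounded region where convergence is uniform. For the two pairs that involve the ray $[y,z]$ (namely $(\gamma_{ya}, \gamma_{yc})$ on a bounded length, and $(\gamma_{bz}, \gamma_{cz})$ on the infinite tail), one absorbs an extra $O(\delta)$ coming from replacing $[y, z_n]$ by $[y,z]$, and this is what forces the constant $6\delta$ rather than the optimal constant for finite triangles. The main obstacle is precisely this last step: distinct geodesic rays from $y$ to the ideal point $z$ need not be equal, so one cannot obtain pointwise convergence of the parametrizations, only uniform closeness on compacts, and organizing this fellow-traveler control carefully enough to get the explicit constant $6\delta$ is the delicate accounting in the proof. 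Everything else is routine: the finite thin-triangle lemma, continuity of Gromov products in the horofunction/Gromov compactification, and an Arzelà–Ascoli-type extraction.
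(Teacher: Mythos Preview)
Your proposal is correct and follows essentially the same route as the paper's proof: approximate the ideal vertex by points along one of the fixed rays, apply the finite tripod lemma with constant $4\delta$, extract limits (using properness and Arzel\`a--Ascoli), and pay an extra $2\delta$ to replace the limiting auxiliary ray by the prescribed one. The only cosmetic difference is that you exhaust along $[x,z]$ and approximate $[y,z]$, whereas the paper exhausts along $[y,z]$ and approximates $[x,z]$; by the symmetry of the statement in $x$ and $y$ this is immaterial.
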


This lemma can be deduced from standard facts in hyperbolic geometry. We include a brief proof for reader's convenience.

\begin{proof}
Consider the triangle $(x,y,z)$ whose edges are as given in the statement. Let $z_n$ be a sequence of points on the edge $(y,z)$ that converge to $z$. Consider the segments $\zeta_n$ from $x$ to $z_n$. Since $M$ is proper, by Arzel\`{a}--Ascoli Theorem, up to subsequence, they converge to a ray $\zeta$ between $x$ and $z$.

For each triangle $(x,y,z_n)$, fix points $a_n,b_n,c_n$ respectively on the edges $[x,y]$, $[x,z_n]$ and $[y,z_n]$ that map to the junction point of the associated tripod \cite[Ch.\ 1]{hyperbolic-book}. Using the fact that $M$ is proper and passing to a further subsequence of $z_n$, we may suppose that the sequences $a_n,b_n,c_n$ converge, respectively, to the points, $a\in [x,y], b' \in \zeta$ and $c \in [y,z]$. Let $b$ be the point on $[x,z]$ at distance $d(x,a)$ from the $x$. 

Now note that by the tripod lemma \cite[Proposition 3.1]{hyperbolic-book}, we have the required property within each triangle $(x,y,z_n)$ with $4\delta$. Since all points $a_n,b_n,c_n$ converge to respectively $a,b',c$, the same property is true at the limit triangle with $[x,z]$ replaced by $\zeta$. Now since $[x,z]$ and $\zeta$ are at parametrized-distance $2\delta$-apart, we get the required property with $6\delta$.
\end{proof}


\begin{lemma}[Fellow travellers]\label{lemma.fellow.travel}
For every $\xi, \eta \in \overline{M}^h$, let $\gamma_\xi$ and $\gamma_\eta$ be 
geodesic rays such that $\gamma_\xi(0)=\gamma_\eta(0)=o$ and $\gamma_\zeta(t) \to \pi_\zeta$ as $t \to \infty$ for $\zeta \in \{\xi,\eta\}$. Then for any $r \geq 0$ such that $(\xi|\eta)_o \geq r$, we have $d(\gamma_\xi(t), \gamma_\eta(t)) \leq 8\delta$ for every $t \in [0,r]$.
\end{lemma}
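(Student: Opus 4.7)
The plan is to reduce to the classical fellow-traveler property for geodesic segments via a limiting argument: approximate the boundary projections $\pi_\xi, \pi_\eta$ by points $x_T := \gamma_\xi(T)$ and $y_T := \gamma_\eta(T)$ for large $T$, and apply the standard tripod estimate to the geodesic triangle with vertices $(o, x_T, y_T)$.

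First, observe that the restrictions $\gamma_\xi|_{[0,T]}$ and $\gamma_\eta|_{[0,T]}$ are geodesic segments from $o$ to $x_T$ and $y_T$ of length $T$. By the tripod approximation of thin triangles in a $\delta$-hyperbolic space (as used in the proof of Lemma \ref{lemma.tt}) applied to $(o, x_T, y_T)$ with these chosen sides, for every $t \in [0, (x_T|y_T)_o]$ one has
\[
d(\gamma_\xi(t), \gamma_\eta(t)) \leq 4\delta.
\]

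The heart of the argument is then to show $(x_T|y_T)_o \geq r - 2\delta$ for all sufficiently large $T$. The plan is to combine two ingredients: (i) the four-point condition \eqref{eq.defining.eq} extends, with at most a doubling of $\delta$, to the Gromov product on $\overline{M}^h$ defined via infima, which (iterating twice) yields
\[
(x_T|y_T)_o \geq \min\{(x_T|\xi)_o,\,(\xi|\eta)_o,\,(\eta|y_T)_o\} - 2\delta;
\]
(ii) since $x_T$ lies on a geodesic ray to $\pi_\xi$ and any two horofunctions projecting to the same Gromov boundary point in a $\delta$-hyperbolic space agree up to a bounded additive error, a direct computation from the defining infimum $(x_T|\xi)_o = -\tfrac{1}{2}\inf_{m}(h_{x_T}(m)+\xi(m))$ shows $(x_T|\xi)_o \geq T - O(\delta)$, and similarly for $(y_T|\eta)_o$. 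Hence for $T$ large the minimum above is attained at $(\xi|\eta)_o \geq r$, giving the claimed lower bound.

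Combining the two steps yields $d(\gamma_\xi(t), \gamma_\eta(t)) \leq 4\delta$ on $[0, r-2\delta]$, and for $t \in (r-2\delta, r]$ the triangle inequality together with the fact that $\gamma_\xi, \gamma_\eta$ are $1$-Lipschitz contributes an additional error of at most $4\delta$, for a total of $8\delta$ as required. The main obstacle is the careful accounting in step (ii): extending the four-point condition to $\overline{M}^h$ and controlling $(x_T|\xi)_o$ each potentially lose multiples of $\delta$, and one must verify these losses fit within the stated bound $8\delta$ by using that geodesic rays to a common Gromov boundary point are asymptotic up to $O(\delta)$ and that horofunctions over such a point differ only by a bounded additive constant.
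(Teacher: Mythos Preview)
Your overall strategy coincides with the paper's: approximate $\xi,\eta$ by points $\gamma_\xi(s),\gamma_\eta(s)$ on the rays, establish $(\gamma_\xi(s)|\gamma_\eta(s))_o\geq r-2\delta$ for large $s$, and then use hyperbolicity to propagate this to a bound on $d(\gamma_\xi(t),\gamma_\eta(t))$ for $t\leq r$. The final step is effectively identical in both arguments (your tripod bound on $[0,r-2\delta]$ plus the $1$-Lipschitz extension to $[r-2\delta,r]$ is just a geometric rephrasing of the paper's computation via $(\gamma_\xi(t)|\gamma_\eta(t))_o\geq \min\{t,\,r-2\delta,\,t\}-2\delta$).

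The only substantive difference is in how the key estimate $(\gamma_\xi(s)|\gamma_\eta(s))_o\geq r-2\delta$ is obtained. The paper bypasses your ``main obstacle'' entirely: rather than extending the four-point inequality to the horofunction product and separately controlling $(x_T|\xi)_o$, it invokes \eqref{eq.upto.2delta} directly. That inequality already says that any two sequences in $M$ converging (in the Gromov sense) to the same pair of boundary points $\pi_\xi,\pi_\eta$ have Gromov products differing by at most $2\delta$; combined with the elementary observation that $\liminf_n(x_n|y_n)_o\geq(\xi|\eta)_o$ whenever $h_{x_n}\to\xi$ and $h_{y_n}\to\eta$ pointwise (take $m$ nearly realizing the infimum defining $(\xi|\eta)_o$), this yields the estimate in one line, with no need to verify a four-point inequality on $\overline{M}^h$ or to track additive $O(\delta)$ losses in $(x_T|\xi)_o$. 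Your route is workable, but the bookkeeping you worry about (and which is indeed slightly inconsistent in your sketch---a four-point inequality with constant $2\delta$ iterated twice gives $-4\delta$, not $-2\delta$) is simply unnecessary once one appeals to \eqref{eq.upto.2delta}.
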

\begin{proof}
We can suppose that $\pi_\xi \neq \pi_\eta$ and $r \geq 2\delta$. It follows by \eqref{eq.upto.2delta} that for every $\epsilon>0$ for every $s>0$ large enough, we have $(\gamma_\xi(s) | \gamma_\eta(s))_o \geq r-2\delta-\epsilon$. The statement now follows by expanding the inequality
$(\gamma_\xi(t)| \gamma_\eta(t))_o \geq (\gamma_\xi(t), \gamma_\xi(s)) \wedge (\gamma_\xi(s),\gamma_\eta(s))\wedge (\gamma_\eta(s),\gamma_\eta(t))-2\delta$
for $t \leq r$ and $s \to \infty$.
\end{proof}

We now give the
\begin{proof}[Proof of Lemma \ref{lemma.BQ.key}]
We will prove the claim with $R=14\delta +4$. Let such $\xi \in \partial_h M$, $y \in \overline{M}^h$ and $D>0$ be given. Let $g\in \Isom(M)$. To construct the set $C$, fix three rays $[o,\pi_\xi]$, $[o,\pi_y]$ and $[o,g\pi_\xi]$ and for $i=1,\ldots,\lfloor D \rfloor-1$, let $m_i$, $m_i'$ and $m_i''$ be points  on the respective rays satisfying $d(z_i,o)=i \wedge d(o,\zeta)$ for every couple  $(z_i,\zeta) \in \{(m_i,\pi_\xi), (m_i',\pi_y), (m_i'',g\pi_\xi)\}$. We denote $m_0=m'_0=m_0''=o$. 
Suppose now that $(g\xi|y)_o \geq \kappa(g)$ and $\kappa(g)<D$. 
Using the thin triangles Lemma \ref{lemma.tt} for $(o,go,g\xi)$, since $D >\kappa(g)$, we deduce that there exists $i_0, j_0 \leq \kappa(g)$ such that $d(gm_{i_0},m_{j_0}'') \leq 6 \delta +4$.

We now use the fellow-travellers Lemma \ref{lemma.fellow.travel} for $g\xi,y \in \overline{M}^h$ with the geodesic rays $[o,g\pi_\xi]$ and $[o,\pi_y]$. Since $(g\xi|y)_o \geq r:=\kappa(g)$ and $j_0 \leq \kappa(g)$, we find that there exists $i_1 \leq \kappa(g)$ such that $d(m''_{j_0}, m'_{i_0}) \leq 8\delta$.

We deduce that $d(gm_{i_0},m_{i_1}') \leq 14\delta +4$. Therefore, desired result holds with $C:=\{(m_i, m'_j)| 0\leq i,j\leq \lfloor D \rfloor -1\}$.
\end{proof}

\begin{proof}[Proof of Proposition \ref{quantitative-log-regularity}]
We fix $r \in [0,1)$ such that $\|\lambda_G(\mu_{r,\lazy})\|<1$. The latter may be equal to 1 only for $r=0$ (see Remark \ref{rk.lazy.discussion}) in which case the inequality holds trivially by setting $C(.,1)\equiv \infty$. 
Note that the measure $\nu$ is $\mu_{r,\lazy}$-stationary and denoting by $S_r$ the support of $\mu_{r,\lazy}$, we have $\kappa_{S_r}=\kappa_S$. To ease the notation, in the proof, we write $\mu$ for $\mu_{r,\lazy}$.

Let $\nu$ be a $\mu$-stationary probability measure on $\partial_h M$. Let $(B=G^{\N}, \beta=\mu^{\otimes \N})$ be the Bernoulli space and $T:B \to B$ the shift map.  Since $\partial_h M$ is compact, metrizable (see e.g.\ \cite[Proposition 3.1]{maher-tiozzo}) and $\Isom(M)$ acts continuously on $\partial_h M$, by a result of Furstenberg \cite{Furstenberg.boundary}, it follows that for $\beta$-almost every $b\in B$, there exists a probability measure $\nu_b$ on  $\partial_h M$ such that the following weak convergence holds  
\begin{equation}(b_1\cdots b_n)\ast \nu \overset{\textrm{weakly}}{\underset{n\to +\infty}{\longrightarrow}} \nu_b.\label{ee1}\end{equation} Moreover, for every $n \in \N$, we have 
\begin{equation} (b_1\cdots b_n)\ast \nu_{T^n b} =\nu_b \qquad \text{and} \qquad \nu=\int_{B} \nu_b d\beta(b).\label{ee2}\end{equation}
For every $b\in B$ and $n\in \N$, denote for simplicity $R_n(b):=b_1 \cdots b_n$ and  $k_n(b):=\kappa(R_n(b))$. Let now $\eta\in \overline{M}^h$.
Using \eqref{ee2} and Fubini--Tonelli, we have 
\begin{align*}
& \int_{\partial_h M} (\eta|\xi)_o d\nu(\xi)= \int_B \int_{\partial_h M} (\eta|y)_o d\nu_b(y) d\beta(b) \\ 
&= \int_B \int_0^\infty \nu_b((\eta|y)_o \geq t) dt d\beta(b) 
\\ &
\leq
 \int_B \sum_{n=0}^\infty \int_0^\infty 1_{\kappa_n(b)\leq t <\kappa_{n+1}(b)} \nu_b((\eta|y)_o\geq t) dt  d\beta(b) ,
\end{align*}
where we used the fact that $\kappa_n(b)\to +\infty$ almost surely and where, for every $t>0$, we set $1_{\kappa_n(b)\leq t <\kappa_{n+1}(b)}=0$ whenever $\kappa_n(b)\geq \kappa_{n+1}(b)$. 
Using  Fubini--Tonelli's theorem, we deduce that 
\begin{align*}
\int_{\partial_h M} (\eta|\xi)_o d\nu(\xi)&=
\sum_{n=0}^\infty \int_B \int_{0}^{\infty} \nu_b((\eta|y)_o \geq t)\mathbf{1}_{\kappa_n(b)\leq t < \kappa_{n+1}(b)} dt d\beta(b) \\ & \leq
\sum_{n=0}^\infty \kappa_S \int_B \nu_b((\eta|y)_o \geq \kappa_n(b)) d\beta(b) 
\end{align*}
Now using the first equality of (\ref{ee2}), we have  
\begin{equation*}\begin{aligned}
& \int_{\partial_h M} (\eta|\xi)_o d\nu(\xi) \leq 
\sum_{n=0}^\infty \kappa_S \int_B \nu_{T^n b}\left((\eta| b_1 \cdots b_n y)_o \geq \kappa_n(b)\right) d\beta(b) \\ &=
\sum_{n=0}^\infty \kappa_S \int_B \int_G \nu_{b}((\eta|gy)_o \geq \kappa(g)) d\mu^{\ast n}(g) d\beta(b)\\
&=
\sum_{n=0}^\infty \kappa_S \int_B \int_{\partial_h M} \mu^{\ast n}((\eta|gy)_o \geq \kappa(g)) d\nu_{b}(y) d\beta(b),
\end{aligned}
\end{equation*}
where in the second line we used the fact that $\beta=\mu^{\otimes \N}$ is a product measure and in the last line we used Fubini--Tonelli's theorem. 
We conclude that 
$$
\int_{X}{(\eta|x)_o\,d\nu(x)} \leq  \sum_{n=0}^\infty \kappa_S \sup_{y\in \partial_h M}\p\left((\eta| R_n y)_o \geq \kappa(R_n)\right).
$$
Using now Lemma \ref{lemma.BQ.key}, we get that  for every $c>1$, $n \in \mathbb{N}$, $\xi \in \partial_h M$, $y\in \overline{M}^h$, 
$$\p \left((R_n \xi | y)_o\geq \kappa(R_n)\right)\leq 
\underset{a_n}{\underbrace{\p(\kappa(R_n)\geq c^n)}} + \underset{b_n}{\underbrace{c^{2n} \sup_{x',y'\in  M}\p( d(R_n x', y')\leq R(\delta))}}.$$

Thus
\begin{equation}\label{estime.integral}\int_{X}{(\eta|x)_o\,d\nu(x)}\leq \kappa_S \left( \sum_{n=0}^{+\infty}{a_n} + \sum_{n=0}^{+\infty}{b_n}\right).\end{equation}
 
On the one hand, since $\kappa(R_n)\leq n \kappa_S$, 
we have 
$\sum_{n=0}^{+\infty}{a_n}
\leq \sum_{n=0}^{+\infty}{\mathbf{1}_{c^n \leq n \kappa_S}}$.
Using this, it is not hard to deduce that
\begin{equation}\label{eq.born.an}
\sum_{n=0}^{+\infty}{a_n}
\leq \max \left\{\frac{2\ln \kappa_S}{\ln c} , \frac{4}{(\ln c)^2} \right\}.
\end{equation}

On the other hand, by Lemma \ref{lemma.bq.5.2}, for every $n\in \N$, we have for every $x',y'\in M$, 
$$ \p( d(R_n x', y')\leq R(\delta)) \leq A_0(R(\delta) +D_0) \|\lambda_G(\mu)\|_2^n,$$
where $A_0(.)$ is the function defined in that lemma.

We deduce that for every $1<c<\|\lambda_G(\mu)\|_2^{-1/2}$, 
\begin{equation}\label{eq.born.bn}
\sum_{n=0}^{+\infty}{b_n} \leq A_0(R(\delta) +D_0) \sum_{n=0}^{+\infty}{ (c^2 \|\lambda_G(\mu)\|_2)^n} \leq \frac{A_0(R(\delta) +D_0)}{1-c^2 \|\lambda_G(\mu)\|_2}.
\end{equation}
The proof follows by combining \eqref{estime.integral}, \eqref{eq.born.an} and \eqref{eq.born.bn}. 
\end{proof}
 

 

Theorem \ref{thm.main.text} now directly follows by putting together Propositions \ref{prop.hyp} and \ref{quantitative-log-regularity}.

\begin{proof}[Proof of Theorem \ref{thm.main.text}]
Using the estimate \eqref{eq.prop.regularity} in combination with Lemma \ref{cohomological.compactification}, one gets that in Proposition \ref{prop.hyp} (see also Remark \ref{rk.proper.case}), the constant $\mathfrak{c}$ is bounded above by $2C(\kappa_S,\|\lambda_G(\mu_{r,\lazy})\|_2)$. Since the right-hand-side of the inequality \eqref{eq.prop-hyp} is increasing in $\mathfrak{c}$, we can substitute $2C(\kappa_S,\|\lambda_G(\mu_{r,\lazy})\|_2)$ for $\mathfrak{c}$, one gets that for every $\xi \in \overline{M}^h$, $n \in \N$ and $t>0$, 
$$
\mathbb{P}(|\sigma(L_n,\xi) - n \ell(\mu)|\geq nt) \leq 2   \exp \left(-\frac{n t^2}{32(\kappa_S + 2C\left(\kappa_S,\|\lambda_G(\mu_{r,\lazy})\|_2)\right)^2}\right)
$$
for every $r \in [0,1)$. This yields the desired estimate.
\end{proof}

\begin{remark}[\textit{Random walks with unbounded support}]\label{rk.unbounded.support}
As mentioned in the introduction (Remark  \ref{rk.as.mentioned}), one can have a version of Theorem \ref{thm.main.text} under the assumption that the probability measure $\mu$ driving the random walk has a finite exponential moment, i.e.~ there exists $\alpha_0>0$ such that $\int e^{\alpha_0 \kappa(g)} d\mu(g) < \infty$. Indeed, the use of Azuma--Hoeffding concentration inequality in \S \ref{sec.cocycle} can be replaced for example by the result of Liu--Watbled \cite[Theorem 1.1]{liu-watbled} adapted for martingale differences with conditionally bounded exponential moment. 
Using the latter, one obtains the following version of Theorem \ref{thm.main.text}. Keep the assumptions of Theorem \ref{thm.main.text} and (instead of the bounded support assumption) suppose that there exists $\alpha_0>0$ such that $\int e^{\alpha_0 \kappa(g)} d\mu(g):=K<\infty$. Then, there exists a positive real $c$ such that for every $\xi \in \overline{M}^h$, $t>0$ and $n \in \mathbb{N}$, we have  
$$\p\left( |\sigma(L_n,\xi) - n \ell(\mu)|\geq nt \right) \leq \begin{cases}
      2e^{-nct^2}, & \text{if}\ t \in [0,1] \\
      2e^{-nct}, & \text{otherwise}
    \end{cases}
$$
The constant $c$ depends only on $\alpha_0$, $K$ and  on the constants $A_0$, $R_\delta$, $D_0$ and $\|\lambda_G(\mu_{1/2,\lazy})\|_2$ as in Theorem \ref{thm.main.text}. 
This statement has the obvious advantage of applying to random walks with unbounded support (with finite exponential moment) but it has the disadvantage that the dependence of the appearing constant $c$ to aforementioned parameters of $\mu$ is considerably more complicated. In line with our goals in this article, we have chosen not to give more details on this version of Theorem \ref{thm.main.text} for finite exponential moment random walks. 


\end{remark}

In the remainder of this section, we will single out two applications of the methods we used in this part. Namely, in \S \ref{subsec.log.track}, we will give an explicit bound for the bottom of the support of Hausdorff spectrum of the harmonic measure, and in \S \ref{subsec.continuity}, we discuss an application to continuity of the drift.

\subsection{Application to the Frostman property of the harmonic measure}\label{subsec.log.track}

In this part, we keep the assumptions of Theorem \ref{thm.main.text}. In particular, $\mu$ is a non-elementary probability measure with finite support on $\Isom(M)$ where $M$ is a proper geodesic hyperbolic metric space.

Let $B(x,r)$ denote the ball of radius $r$ around $x$ for a natural metric coming from the Gromov product (we do not go into the details here since this metric will not be used, see \cite[\S 8]{GdH}, \cite[Proposition 5.16]{vaisala}). The following result provides an explicit constant $s_0>0$ for which the Frostman type property
$$\nu(B(x,r)) \leq Cr^{s_0}$$
holds for some constant $C>0$ and every $x \in \partial M$ and $r \geq 0$. 

Such a constant gives a lower bound for the bottom of the support of Hausdorff spectrum of $\nu$; see the work of Tanaka \cite{tanaka.spectrum} who gives a thorough multifractal analysis of the harmonic measure in the special setting of hyperbolic groups. Finally, we mention that the existence of such a (non-explicit) constant for the harmonic measure is known in a more general setting of (not necessarily proper) hyperbolic spaces (see \cite[Lemma 2.10]{maher.jlms} and \cite[Corollary 2.17]{BMSS}).

\begin{proposition}[Frostman property]\label{prop.frostman}
Under the hypotheses of Theorem \ref{thm.main.text}, for 
every $$s< \sup_{r\in [0,1)} \frac{1}{\kappa_S}\ln \left( \frac{1}{\|\lambda_G(\mu_{r,\lazy}))\|_2}\right),$$
we have
\begin{equation}\label{reg}\sup_{x\in M \cup \partial M}
\int_{\partial M} {e^{s\, (x|y)_o}\, d\nu(y)}=:K(\mu,S)<+\infty.\end{equation}
In particular, for each such $s>0$, for every $x\in M$, 
\begin{equation}\label{frostman}\nu \left\{y\in \partial_h M : (x|y)_o\geq t \right\} \leq K(\mu,s) e^{-st}.\end{equation}
\end{proposition}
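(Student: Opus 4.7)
The plan is to follow the proof of Proposition \ref{quantitative-log-regularity} almost verbatim, but integrating the exponential of the Gromov product against $\nu$ in place of the Gromov product itself. Fix $r \in [0,1)$ with $\|\lambda_G(\mu_{r,\lazy})\|_2 < 1$ and $s < \frac{1}{\kappa_S}\ln\frac{1}{\|\lambda_G(\mu_{r,\lazy})\|_2}$; the full stated range of $s$ will then follow by taking the supremum over such $r$. Writing $\tilde\mu := \mu_{r,\lazy}$, one has $\kappa_{\supp \tilde\mu} = \kappa_S$ and $\nu$ remains $\tilde\mu$-stationary, so we obtain the same Furstenberg decomposition $\nu = \int_B \nu_b\, d\beta(b)$ with $\beta = \tilde\mu^{\otimes \N}$ as in the proof of Proposition \ref{quantitative-log-regularity}.

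Since $(\eta|y)_o \geq 0$ on $\overline{M}^h$ (by the defining formula of the Gromov product extended to the horofunction compactification), the layer-cake formula yields
\[
\int_{\partial_h M} e^{s(\eta|y)_o}\, d\nu(y) = 1 + s \int_0^\infty e^{st} \int_B \nu_b\bigl((\eta|y)_o \geq t\bigr)\, d\beta(b)\, dt.
\]
Partitioning $[0,\infty)$ into the intervals $[\kappa_n(b),\kappa_{n+1}(b))$ and using $\kappa_{n+1}(b)-\kappa_n(b)\leq \kappa_S$ together with $\kappa_{n+1}(b) \leq (n+1)\kappa_S$, the same Fubini and equivariance manipulations as in Proposition \ref{quantitative-log-regularity} give
\[
s\int_0^\infty e^{st}\int_B \nu_b\bigl((\eta|y)_o \geq t\bigr)\, d\beta(b)\, dt \;\leq\; s\kappa_S e^{s\kappa_S}\sum_{n=0}^\infty e^{sn\kappa_S}\sup_{y\in\partial_h M} \mathbb{P}\bigl((\eta|R_n y)_o \geq \kappa(R_n)\bigr).
\]
Applying Lemma \ref{lemma.BQ.key} with $D=c^n$ (with $1<c<\|\lambda_G(\tilde\mu)\|_2^{-1/2}$ to be chosen) together with Lemma \ref{lemma.bq.5.2}, each summand is bounded by $\mathbb{P}(\kappa(R_n)\geq c^n) + c^{2n} A_0(R(\delta)+D_0)\|\lambda_G(\tilde\mu)\|_2^n$; the first term vanishes for all but finitely many $n$ since $\kappa(R_n) \leq n\kappa_S$ while $c^n$ grows super-linearly.

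The key (and in fact only) point is the convergence of the geometric series $\sum_n \bigl(e^{s\kappa_S} c^2 \|\lambda_G(\tilde\mu)\|_2\bigr)^n$, which occurs iff $s\kappa_S < \ln\frac{1}{c^2\|\lambda_G(\tilde\mu)\|_2}$. For any $s$ in the stated range one can select $c>1$ close enough to $1$ so that this inequality holds, proving \eqref{reg}; taking the supremum over $r\in[0,1)$ yields the full admissible range of $s$. Finally, the Frostman bound \eqref{frostman} is immediate from \eqref{reg} via Markov's inequality applied to the non-negative random variable $e^{s(x|y)_o}$ with law $\nu$: for every $x\in M$ and $t>0$,
\[
\nu\{y : (x|y)_o \geq t\} \leq e^{-st} \int e^{s(x|y)_o}\, d\nu(y) \leq K(\mu,s)\, e^{-st}.
\]
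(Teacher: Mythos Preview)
Your proposal is correct and follows essentially the same approach as the paper: layer-cake formula, partition along the displacement levels $[\kappa_n(b),\kappa_{n+1}(b))$, then Lemma~\ref{lemma.BQ.key} and Lemma~\ref{lemma.bq.5.2} to bound the summands, with convergence of $\sum_n (e^{s\kappa_S}c^2\|\lambda_G(\tilde\mu)\|_2)^n$ dictating the admissible range of $s$. The only (minor) difference is that the paper works on the Gromov boundary $\partial M$ where the limit measures $\nu_b=\delta_{\xi_b}$ are Dirac masses, which slightly streamlines the Fubini/equivariance step, whereas you carry over the horofunction-boundary calculation from Proposition~\ref{quantitative-log-regularity} verbatim; both routes yield the same bound.
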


In the above statement and hereafter, for $x,y \in M \cup \partial M$, the Gromov product $(x|y)_o$ is defined as $\inf \liminf_{n \to \infty}(x_n|y_n)_o$, where the infimum is taken over all sequences $x_n$ and $y_n$ that converge, respectively, to $x$ and $y$.

Since the proof of the above result follows similar lines as the proof of Proposition \ref{quantitative-log-regularity}, we will keep its notation and content with indicating the main lines and changes. We write $B=G^\N$, $\beta=\mu^{\otimes \N}$. By $T$ we denote the shift map on $B$ and for $b \in B$ and $n \in \N$, we write $R_n(b)=b_1\ldots b_n$. It is well-known that for $\beta$-a.s.\ $b \in B$, the sequence $R_n(b)o$ converges to an element of the Gromov boundary $\partial M$ (see \cite[Proposition 3.1.b]{BQ.CLT.hyperbolic} or \cite[Theorem 1.1]{maher-tiozzo}). This defines a boundary map 
$$\xi:B \to \partial M, \quad \text{given by} \quad \xi_b=\lim_{n \to \infty}R_n(b)o$$ on a $T$-invariant $\beta$-full measure subset $B'$ of $B$ which does not depend on $o \in M$. In what follows, we will ignore the distinction between $B'$ and $B$, this should not cause confusion. The fact that in the following proof (as opposed to Proposition \ref{quantitative-log-regularity}) we are working over the Gromov boundary $\partial M$  brings additional simplifications (cf.~ \cite[Prop 5.1]{BQ.CLT.hyperbolic}); recall that for the $\mu$-harmonic measure $\nu$ on $\partial M$, for $\beta$-a.s.\ the limit measure $\nu_b=b_1\ldots b_n \nu$ is a Dirac mass and we have $\nu_b=\delta_{\xi_b}$.

\begin{proof}
Let $s>0$. The beginning of the proof of Proposition \ref{quantitative-log-regularity} is replaced by the following series of identities obtained by multiple applications of Fubini--Tonelli's theorem: We have 
\begin{align*}
& \int_{\partial M} \exp(s\,  (\eta|\xi)_o) d\nu(\xi)= \int_B \int_{\partial M} \exp(s\,(\eta|\xi_b)_o ) d\beta(b) \\ &= 
\int_B \int_0^\infty 1_{(\eta|\xi_b)_o \geq \frac{\ln t}{s}} dt d\beta(b)\\
& =s  \int_B \int_{-\infty}^\infty \exp(s t) 1_{(\eta|\xi_b)_o \geq t} dt d\beta(b)\\
& 
\leq 
1+s \sum_{n=0}^\infty  \int_B \int_{0}^{\infty}   \exp(st) 1_{(\eta|\xi_b)_o \geq t} \mathbf{1}_{\kappa_n(b)\leq t < \kappa_{n+1}(b)} dt d\beta(b) \\ &   \leq 1+s
\sum_{n=0}^\infty \kappa_S \exp(s(n+1)\kappa_S) \int_B 1_{(\eta|\xi_b)_o \geq \kappa_n(b)} d\beta(b).
\end{align*}
Now, let $c>1$ and denote by $a_n$ and $b_n$ the sequences (that depend on $c$) as in the proof of Proposition \ref{quantitative-log-regularity}.
Then 
$$\int_{\partial M} \exp(s\,  (\eta|\xi)_o) d\nu(\xi)\leq 1+ s \kappa_S\sum_{n=0}^{\infty}{ a_n \exp(s(n+1)\kappa_S) } + s \kappa_S \sum_{n=0}^{\infty}{b_n \exp(s(n+1) \kappa_S)}.$$
The sum in the middle of the right hand side is a finite sum. The last series is finite 
as soon as 
\begin{equation}\label{eq.choose.c}
\sum_{n=0}^{+\infty}{ \left(c^2 \exp(s\kappa_S) \|\lambda_G(\mu)\|_2 \right)^n}
\end{equation}
is finite. Now, for any $s<\frac{1}{\kappa_S}\ln \frac{1}{\|\lambda_G(\mu)\|_2}$, one can choose $c>1$ so that \eqref{eq.choose.c} is finite and this finishes the proof of \eqref{reg}. Finally \eqref{frostman} follows from \eqref{reg} by Markov inequality. 
\end{proof}

\subsection{Applications to continuity of the drift}\label{subsec.continuity}
A consequence of Theorem \ref{thm.main.with.lambda} is a uniform control, over different driving probability measures with controlled parameters $\kappa_S$ and $\|\lambda_G(\mu)\|_2$, of large deviations of the displacement around the drift. In turn, this allows one to deduce that the drift varies continuously when one perturbs $\mu$ in such a way that that $\kappa_S$ remains bounded and $\|\lambda_G(\mu)\|_2$ remains away from $1$, as we show in Corollary \ref{corol.continuity} below. The idea of such a deduction, of continuity from uniform large deviations, already appears in the literature, see e.g.\ Duarte--Klein \cite[Ch.\ 3]{duarte-klein.book}. However, with our method, this way of deducing the continuity is not optimal as  one can deduce a continuity result directly from unique cocycle-average property for the Busemann cocycle (which was a key point in obtaining our concentration result). We refer to  Proposition \ref{prop.continuity} for a general continuity statement.

\begin{corollary}\label{corol.continuity}
Let $(M,d)$ be a proper geodesic hyperbolic metric space such that $\Isom(M)$ acts cocompactly on $M$. Consider a sequence of non-elementary probability measures $(\mu_m)_{m\in \N}$ with bounded support $S_m$ in the group $\Isom(M)$ such that $$\limsup_{m \to \infty} \inf_{r \in [0,1)} \|\lambda_G((\mu_m)_{r,\lazy})\|_2<1 \qquad \text{and} \qquad \sup_{m \in \mathbb{N}}\kappa_{S_m}<\infty.$$ Suppose that $\mu_m$ converges weakly to some probability measure $\mu_{\infty}$. Then, as $m \to \infty$ $$\ell(\mu_m) \to \ell(\mu_\infty).$$
\end{corollary}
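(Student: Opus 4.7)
The plan is to upgrade the uniform concentration bound of Theorem \ref{thm.main.with.lambda} to a uniform estimate on expectations, and to combine this with weak convergence of $n$-fold convolutions at each fixed $n$. Setting $K:=\sup_{m}\kappa_{S_m}<\infty$ and $\lambda^\ast:=\limsup_{m\to\infty}\inf_{r\in[0,1)}||\lambda_G((\mu_m)_{r,\lazy})||_2<1$, for each sufficiently large $m$ I would pick $r_m\in[0,1)$ such that $||\lambda_G((\mu_m)_{r_m,\lazy})||_2\leq\lambda^{\ast\ast}:=(1+\lambda^\ast)/2<1$. The explicit formula for $D(\cdot,\cdot)$ in Remark \ref{rk.D} is non-decreasing in both arguments, so Theorem \ref{thm.main.with.lambda} applied to $\mu_m$ with $r=r_m$ yields a single constant $C:=K^{2}D(K,\lambda^{\ast\ast})<\infty$ such that, writing $R_n^{(m)}$ for the $n$-th step of the $\mu_m$-walk,
$$\p\bigl(|\kappa(R_n^{(m)})-n\ell(\mu_m)|\geq nt\bigr)\leq 2\exp(-nt^{2}/C)$$
for all $t>0$, $n\in\N$ and all $m$ large enough.

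Next I would pass to expectations. Since $\ell(\mu_m)\leq K$ and $\kappa(R_n^{(m)})\leq nK$ almost surely, one has $|\kappa(R_n^{(m)})-n\ell(\mu_m)|\leq 2nK$, and integrating the tail bound above yields, for every $\epsilon>0$,
$$\left|\frac{1}{n}\E[\kappa(R_n^{(m)})]-\ell(\mu_m)\right|\leq\epsilon+4K\exp(-n\epsilon^{2}/C),$$
uniformly for $m$ large. The limit measure $\mu_\infty$ is supported in $\overline{B_K}$ by the Portmanteau theorem, so it has finite first moment, $\ell(\mu_\infty)$ is well defined by Kingman's theorem, and Fekete's lemma applied to the subadditive sequence $a_n=\E[\kappa(R_n^{(\infty)})]$ gives $\frac{1}{n}a_n\to\ell(\mu_\infty)$.

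Then I would handle the expectation at fixed $n$ by weak convergence. Properness of $M$ combined with cocompactness of the $\Isom(M)$-action implies, via Arzel\`a--Ascoli, that the closed ball $\overline{B_K}\subset G$ is compact. All $\mu_m$ and $\mu_\infty$ are supported in this common compact set, hence the convolutions $\mu_m^{\ast n}\to\mu_\infty^{\ast n}$ weakly with supports in the compact set $\overline{B_{nK}}$, on which $\kappa$ is continuous and bounded. Consequently $\E[\kappa(R_n^{(m)})]\to\E[\kappa(R_n^{(\infty)})]$ as $m\to\infty$ for every fixed $n$.

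Finally, given $\epsilon>0$, I would fix $n$ so large that $4K\exp(-n\epsilon^{2}/C)<\epsilon$ and $\bigl|\frac{1}{n}\E[\kappa(R_n^{(\infty)})]-\ell(\mu_\infty)\bigr|<\epsilon$; combining this with the weak-convergence step via the triangle inequality yields $\limsup_{m\to\infty}|\ell(\mu_m)-\ell(\mu_\infty)|\leq 4\epsilon$, whence $\ell(\mu_m)\to\ell(\mu_\infty)$. I do not anticipate a genuine obstacle: the delicate point is the uniform concentration, which is precisely what Theorem \ref{thm.main.with.lambda} provides; the rest is routine, and an alternative route alluded to in the excerpt would bypass concentration entirely by invoking the unique-cocycle-average property for the Busemann cocycle directly.
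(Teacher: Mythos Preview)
Your proof is correct and follows essentially the same approach as the paper's: use Theorem \ref{thm.main.with.lambda} together with the monotonicity of $D(\cdot,\cdot)$ to get a concentration bound uniform in $m$, convert this to a uniform bound on $|\frac{1}{n}\E[\kappa(R_n^{(m)})]-\ell(\mu_m)|$, use weak convergence of $\mu_m^{\ast n}$ at a fixed large $n$, and combine with subadditivity for $\mu_\infty$ via the triangle inequality. The only minor differences are cosmetic---you integrate the tail bound whereas the paper uses the cruder estimate $\E[|X|]\leq t_0+\kappa_0\,\p(|X|>t_0)$, and you spell out the compactness of $\overline{B_K}$ more carefully (note incidentally that cocompactness is not needed for this: properness of $M$ alone gives compactness of $B_K$ via Arzel\`a--Ascoli).
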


\begin{proof}
Fix $t_0>0$ and let $\lambda<1$ be a constant such that for every $m \in \mathbb{N}$,  $\inf_{r \in [0,1)}\|\lambda_G((\mu_{m})_{r,\lazy})\|_2$ $<\lambda$. Set $\kappa_0=\sup_{m \in \N} \kappa_{S_m}$.
Choose $n_0$ large enough so that
\begin{equation}\label{eq.cont0}
\left|\frac{1}{n_0}\mathbb{E}_{\mu_\infty}[\kappa(L_{n_0})]-\ell(\mu_\infty)\right|<t_0,
\end{equation}
and 
\begin{equation}\label{eq.cont1}
2 \exp \left(\frac{-n_0 t_0^2 }{\kappa_0^2 32 \left(16\ln^+(\kappa_0)+ 8A_0/3 +33\right)^2(1-\sqrt{\lambda})^4} \right)<\frac{t_0}{\kappa_0},
\end{equation}
where the constant $A_0$ (depending only on $M$) is as in Remark \ref{rk.D}.

The choice of $n_0$ satisfying $\eqref{eq.cont1}$ implies by using Theorem \ref{thm.main.with.lambda} and the bound on the function $D$ given in Remark \ref{rk.D} which is non-decreasing in $\kappa$ and in $\lambda$, that for every $m \in \N$ large enough, we have
\begin{equation*}
\mathbb{P}_{\mu_m}\left(|\kappa(L_{n_0})-n_0\ell(\mu_m)| \geq n_0 t_0 \right) \leq \frac{t_0}{\kappa_0}.
\end{equation*}
Since  $  |\frac{1}{n_0}\kappa(L_{n_0}) - \ell(\mu_m)|\leq \kappa_0$,  this implies that for every $m$ large enough, we have
\begin{equation}\label{eq.cont2}
    \left|\frac{1}{n_0} \mathbb{E}_{\mu_m}[\kappa(L_{n_0})] -\ell(\mu_m)\right|\leq 2t_0.
\end{equation}

On the other hand, since $\mu_m \to \mu_\infty$ weakly, we have that as $m \to \infty$, $\mathbb{E}_{\mu_m}[\kappa(L_{n_0})] \to \mathbb{E}_{\mu_\infty}[\kappa(L_{n_0})]$. Therefore,  combining \eqref{eq.cont0} with \eqref{eq.cont2}, it follows that for every $m \in \N$ large enough, we have $|\ell(\mu_\infty)-\ell(\mu_m)|\leq 4t_0$ completing the proof.
\end{proof}

\begin{remark}A particular situation where the hypotheses of the previous result are satisfied is when there exists a finite set $S \subset \Isom(M)$ that contains the supports of all $\mu_m$ for $m \in \mathbb{N}$, $\mu_m \to \mu_\infty$ weakly and $\rho(\lambda_G(\mu_\infty))<1$.
This claim can easily be deduced from the results of Berg--Christensen \cite{berg-christensen}. We will omit the details as we will now prove a general continuity statement.
\end{remark}


The following result is the one can deduce from the unique cocycle-average property similarly to Hennion \cite{hennion} and Furstenberg--Kifer \cite{furstenberg-kifer}. For a very similar proof closer to our setting and related remarks, see Gou\"{e}zel--Math\'{e}us--Maucourant \cite[Proposition 2.3]{gouezel-matheus-maucourant}.
In the following, for a probability measure $\mu$ on $\Isom(M)$, we denote $L_1(\mu)=\int \kappa(g) d\mu(g)$.
 
\begin{proposition}\label{prop.continuity}
Let $(M,d)$ be a proper geodesic hyperbolic metric space. Let $\mu_n$ be a sequence of  non-elementary probability measures that converges weakly to a non-elementary probability measure $\mu$. Suppose furthermore that $L_1(\mu_n) \to L_1(\mu)$ as $n \to \infty$. Then,
$$
\ell(\mu_n) \to \ell(\mu).
$$
\end{proposition}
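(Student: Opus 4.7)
The plan is to exploit the fact that for any non-elementary probability measure $\mu$ on $\Isom(M)$, there exists a $\mu$-stationary probability measure on $\overline{M}^h$ (by compactness of $\overline{M}^h$ and Markov--Kakutani), any such measure is supported on $\partial_h M$, and the drift admits the integral representation
\[
\ell(\mu) = \iint_{\Isom(M) \times \overline{M}^h} \sigma(g, \xi) \, d\mu(g) \, d\nu(\xi)
\]
for any $\mu$-stationary $\nu$. This is the unique cocycle-average property recalled in the proof of Lemma \ref{cohomological.compactification} (see also \cite[Corollary 2.7]{horbez}).

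For each $n \in \N$, fix a $\mu_n$-stationary probability measure $\nu_n$ on $\overline{M}^h$. Since $\overline{M}^h$ is compact metrizable, I would extract a subsequence (still denoted $\nu_n$) along which $\nu_n \to \nu_\infty$ weakly; the limit is supported on the closed set $\partial_h M$ by Portmanteau. The hypothesis $L_1(\mu_n) \to L_1(\mu_\infty)$ together with weak convergence of $\mu_n$ implies tightness of $(\mu_n)$ on $\Isom(M)$ and uniform integrability of $\kappa$ under $(\mu_n)$, so $\mu_n \otimes \nu_n$ converges weakly to $\mu_\infty \otimes \nu_\infty$. Applying this to test functions of the form $(g, \xi) \mapsto \phi(g\xi)$ for continuous bounded $\phi$ on $\overline{M}^h$, which are continuous by continuity of the action, and using stationarity of $\nu_n$ with respect to $\mu_n$, one passes to the limit and concludes that $\nu_\infty$ is $\mu_\infty$-stationary.

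It remains to pass to the limit in the cocycle integral. The Busemann cocycle $(g, \xi) \mapsto \sigma(g, \xi) = \xi(g^{-1}o)$ is jointly continuous on $\Isom(M) \times \overline{M}^h$ and satisfies $|\sigma(g, \xi)| \leq \kappa(g)$. Fix a continuous cutoff $\chi_R : [0, \infty) \to [0, 1]$ with $\chi_R \equiv 1$ on $[0, R]$ and supported in $[0, 2R]$, and write
\[
\iint \sigma(g, \xi) \, d\mu_n(g) \, d\nu_n(\xi) = \iint \sigma \cdot \chi_R(\kappa(g)) \, d(\mu_n \otimes \nu_n) + \iint \sigma \cdot (1 - \chi_R(\kappa(g))) \, d(\mu_n \otimes \nu_n).
\]
The first integrand is continuous and bounded on $\Isom(M) \times \overline{M}^h$, hence converges to the corresponding expression for $\mu_\infty \otimes \nu_\infty$ by weak convergence of product measures. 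The second term is bounded in absolute value by $\int_{\{\kappa(g) > R\}} \kappa(g) \, d\mu_n(g)$, which tends to $0$ uniformly in $n$ as $R \to \infty$ by the uniform integrability supplied by $L_1(\mu_n) \to L_1(\mu_\infty)$. Sending $n \to \infty$ then $R \to \infty$ yields $\ell(\mu_n) \to \ell(\mu_\infty)$ along the chosen subsequence. Since every subsequential limit of $(\ell(\mu_n))$ equals $\ell(\mu_\infty)$, the full sequence converges.

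The main obstacle is that $\sigma$ is unbounded in the $g$-variable, so weak convergence of $\mu_n \otimes \nu_n$ alone would not suffice; this is precisely what the first-moment hypothesis is designed to overcome via the cutoff and uniform integrability argument above.
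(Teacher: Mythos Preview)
Your proof is correct and follows essentially the same route as the paper's: pick $\mu_n$-stationary measures $\nu_n$ on the horofunction compactification, pass to a weak subsequential limit $\nu_\infty$, check it is $\mu_\infty$-stationary, and invoke the unique cocycle-average property to identify the limit of the integrals with $\ell(\mu_\infty)$. The paper handles the passage to the limit in the integral by a one-line appeal to dominated convergence using $|\sigma(g,\xi)|\le\kappa(g)$ and $L_1(\mu_n)\to L_1(\mu_\infty)$, whereas you spell out the same uniform integrability argument explicitly with a cutoff $\chi_R$; the content is identical. One small remark: the observation that $\nu_\infty$ is supported on $\partial_h M$ is not actually needed, since the unique cocycle-average property (as extended in Lemma~\ref{cohomological.compactification}) applies to any $\mu_\infty$-stationary measure on $\overline{M}^h$.
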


\begin{proof}
Let $\nu_n$ be a $\mu_n$-stationary probability measure on the horofunction boundary $X$ of $M$. By unique cocycle-average property \cite[Proposition 3.3(c)]{BQ.CLT.hyperbolic}, we have 
\begin{equation}\label{eq.cont1.prop}
\ell(\mu_n)=\int_{G \times X} \sigma(g,\xi) d\mu_n(g) d\nu_n(\xi).
\end{equation}
Since $X$ is compact, up to passing to a subsequence of $\nu_n$, we can suppose that the sequence $\nu_n$ converges to a probability measure $\nu$ on $X$. Since $\mu_n \to \mu$ weakly, one deduces from the continuity of the action of $G$ on $X$ that $\nu$ is $\mu$-stationary. Using the hypothesis that $L_1(\mu_n) \to L_1(\mu)$ and the fact that $\kappa(g) \geq |\sigma(g,\xi)|$ for every $g \in G$ and $\xi \in X$, one gets by dominated convergence that the sequence of integrals in \eqref{eq.cont1.prop} converges to $\int_{G \times X} \sigma(g,\xi) d\mu(g) d\nu(\xi)$. But by unique cocycle-average property, the latter is equal to $\ell(\mu)$. This implies the claimed convergence.
\end{proof}

Finally, we mention that, in the case of a countable hyperbolic group, further regularity properties of the drift are known, see e.g.\ Erschler--Kaimanovich \cite{erschler-kaimanovich}, Ledrappier \cite{Ledrappier.cts}, Gou\"{e}zel \cite{gouezel.cts} and Mathieu--Sisto \cite{mathieu-sisto}.

\section{The case of Gromov hyperbolic groups and rank-one linear groups}\label{sec.koubi.manu}


The goal of this section is to prove Corollaries \ref{corol.Koubi} and \ref{corol.manu} using Theorem \ref{thm.main.text}.

An important ingredient that allows us to obtain concentration inequalities with implied constants that depends, in a minimal fashion, on the probability measure $\mu$ is a version of uniform Tits alternative for group of isometries of hyperbolic spaces.
For hyperbolic groups, we will use Koubi's results \cite{koubi} and for linear groups the strong Tits alternative of Breuillard \cite{breuillard.strong.tits}.

\subsection{Concentration inequalities for random walks on Gromov hyperbolic groups}
 
For the proof of Corollary \ref{corol.Koubi}, we will use the following result of Koubi:

\begin{theorem}[\cite{koubi}]\label{thm.cite.koubi}
Let $\Gamma$ be a finitely generated non-elementary hyperbolic group. There exists $N_{\Gamma} \in \mathbb{N}$ such that for any finite subset $S$ generating $\Gamma$, there exists two elements $a,b \in S^{N_\Gamma}$ that generate a free subgroup of rank two. 
\end{theorem}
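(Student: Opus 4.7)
The plan is to combine a Margulis--Delzant-type lemma for hyperbolic groups with a quantitative ping-pong argument on the Gromov boundary $\partial\Gamma$. The uniformity of $N_\Gamma$ in the generating set $S$ ultimately reduces to the fact that every relevant geometric constant --- a lower bound on loxodromic translation lengths and a lower bound on the Gromov product between distinct boundary fixed-point pairs --- can be chosen depending only on $\Gamma$.

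First I would fix, once and for all, a proper cocompact isometric action of $\Gamma$ on a proper geodesic hyperbolic space $X$ (for instance a Rips complex), and a basepoint $o \in X$. The Margulis--Delzant lemma for hyperbolic groups then supplies a constant $\eps_\Gamma > 0$ depending only on $\Gamma$ such that any finite subset $T \subset \Gamma$ with $\max_{g \in T} d(go, o) < \eps_\Gamma$ generates a virtually cyclic (or finite) subgroup. As an immediate corollary, the stable translation length of every infinite-order element of $\Gamma$ is bounded below by some $\tau_\Gamma > 0$, and in addition any two loxodromic elements whose subgroups are non-elementary have Gromov-product separation of their boundary fixed-point pairs controlled in terms of $\Gamma$ alone.

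Next, I would produce two independent loxodromic elements in $S^{k}$ for some $k = k(\Gamma)$. Since $S$ generates the non-elementary group $\Gamma$, products of bounded length must leave any fixed virtually cyclic subgroup of $\Gamma$; together with the Margulis lemma this forces some element of $S^{k_1}$ (for $k_1$ bounded in terms of $\Gamma$) to displace $o$ by at least $\eps_\Gamma$, and a bounded power of any such element is loxodromic. Call such an element $g$. If every loxodromic element of $S^k$ had the same pair of fixed points $\{g^+,g^-\}$ on $\partial\Gamma$ as $g$, then for $k$ large but still bounded in $\Gamma$ the group $\langle S^k\rangle$ would sit inside the virtually cyclic stabilizer $E(g):=\Stab(\{g^+,g^-\})$, contradicting $\langle S\rangle = \Gamma$. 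Hence one extracts an independent loxodromic $h\in S^{k_2}$ with $\{h^+,h^-\}\cap\{g^+,g^-\}=\emptyset$, again with $k_2 = k_2(\Gamma)$.

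Finally I would apply a quantitative ping-pong on $\partial\Gamma$: for sufficiently small pairwise disjoint neighborhoods $U_g^\pm, U_h^\pm$ of the four fixed points, large enough powers $g^N, h^N$ send the complement of $U_g^-$ into $U_g^+$ and likewise for $h$, freely generating a rank-two free subgroup. Setting $N_\Gamma := N\cdot\max(k_1,k_2)$ then yields the theorem. The main obstacle is the uniformity of the ping-pong exponent $N$ in $S$: one needs a quantitative North--South contraction estimate for the boundary action of a loxodromic, with rate depending only on $\tau_\Gamma$ and on the uniform separation of the boundary fixed-point pairs. This is the technical heart of the argument, and it is precisely here that hyperbolicity of $\Gamma$ and the Margulis--Delzant lemma combine to ensure that $N$ can be bounded in terms of $\Gamma$ alone, independently of the specific generating set $S$.
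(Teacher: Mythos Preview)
The paper does not prove this theorem; it is quoted as a black-box result from Koubi's paper \cite{koubi} and used as an input to the proof of Corollary~\ref{corol.Koubi}. So there is no ``paper's own proof'' to compare your attempt against.

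That said, your outline is broadly in the spirit of Koubi's actual argument: produce a loxodromic of controlled word-length in $S$, then a second independent one, and finally run a quantitative ping-pong with a power bounded in terms of $\Gamma$ alone. You have correctly identified the crux --- the uniformity in $S$ of the ping-pong exponent --- and your appeal to a Margulis--Delzant-type injectivity radius to force large displacement in bounded $S$-length is the right mechanism. Where your sketch remains genuinely incomplete is in the step ``if every loxodromic of $S^k$ shared the fixed pair of $g$ then $\langle S^k\rangle \subset E(g)$'': this does not follow as stated, since elements of $S^k$ need not be loxodromic, and the passage from ``all loxodromics in $S^k$ lie in $E(g)$'' to ``$\langle S\rangle \subset E(g)$'' requires an additional argument (Koubi handles this by working with the whole action of $S$ on axes and using that conjugates of $g$ by generators either share or are disjoint from the axis of $g$, with a bound on how far one must conjugate to escape $E(g)$). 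Likewise, the quantitative North--South dynamics you invoke at the end needs the stable translation lengths of $g,h$ to be bounded below \emph{and} their $S$-word-lengths to be bounded above simultaneously; your sketch asserts both but the interplay between them is exactly where the work lies. As a proof plan this is reasonable, but several of the ``bounded in terms of $\Gamma$'' claims would need to be made precise before it constitutes a proof.
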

Here, by $S$-length of an element $g \in \Gamma$, we mean the distance of $g$ to the identity element in the word-metric induced by $S$.

The previous result will be useful to us in combination with the following straightforward observation (see e.g.\ \cite[\S 8]{breuillard.sl2}).

\begin{lemma}\label{lemma.uniformtits.radius}
Let $\Gamma$ be a countable group and $S \subset \Gamma$ such that $S^{N_0}$ contains a pair of elements that generates a free subgroup of rank two for some $N_0 \in \N$. 
Let $\mu$ be a probability measure with support $S\cup \{\id\}$ and set $m_\mu=\min_{g \in S}\mu(g)$. Then,
 $$\|\lambda_\Gamma(\mu)\|_2 \leq    \left(1- \left(1-\frac{\sqrt{3}}{2}\right)  m_{\mu}^{2{N_0}} \right)^{\frac{1}{2{N_0}}}
 .$$

\end{lemma}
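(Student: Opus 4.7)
The plan is to reduce the claim to bounding $\|\lambda_\Gamma(\pi)\|_2$ for the auxiliary symmetric probability measure $\pi := (\mu * \check\mu)^{*N_0}$, where $\check\mu(g):=\mu(g^{-1})$, and then to bound this norm via a convex-combination argument invoking Kesten's formula for the standard walk on $F_2$. For the reduction, note that $\lambda_\Gamma(\mu*\check\mu) = \lambda_\Gamma(\mu)\lambda_\Gamma(\mu)^*$ is positive self-adjoint, so the $C^*$-identity gives $\|\lambda_\Gamma(\mu*\check\mu)\|_2 = \|\lambda_\Gamma(\mu)\|_2^2$; raising to the $N_0$-th power using the self-adjoint identity $\|T^{N_0}\|_2 = \|T\|_2^{N_0}$ yields $\|\lambda_\Gamma(\pi)\|_2 = \|\lambda_\Gamma(\mu)\|_2^{2N_0}$.

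Next, I would prove the mass bound $\pi(g) \geq m_\mu^{2N_0}$ for each $g \in \{a, a^{-1}, b, b^{-1}\}$. Writing $a = s_1 \cdots s_{N_0}$ with $s_i \in S$, each single-letter element $s_i = s_i \cdot \id^{-1}$ lies in $\supp(\mu*\check\mu)$ with $(\mu*\check\mu)(s_i) \geq \mu(s_i)\mu(\id) \geq m_\mu^2$; here the hypothesis $\supp\mu = S\cup\{\id\}$ is interpreted so that $\mu(\id) \geq m_\mu$, as is the case in the applications of this lemma to lazy random walks with parameter at least $\tfrac12$. Chaining these estimates along the factorization gives $\pi(a) \geq m_\mu^{2N_0}$, and the three remaining cases follow from the symmetry of $\pi$. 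This mass estimate is the main technical step; the surrounding functional-analytic ingredients are classical.

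Setting $\nu_F := \tfrac14(\delta_a + \delta_{a^{-1}} + \delta_b + \delta_{b^{-1}})$, the above gives $\pi \geq 4 m_\mu^{2N_0}\,\nu_F$, so one may write $\pi = 4m_\mu^{2N_0}\,\nu_F + (1 - 4m_\mu^{2N_0})\,\nu'$ with $\nu'$ a probability measure (note $4m_\mu^{2N_0} \leq 1$, since $m_\mu \leq 1/|S| \leq 1/2$). The operator-norm triangle inequality then gives
\[
\|\lambda_\Gamma(\pi)\|_2 \,\leq\, 4m_\mu^{2N_0}\,\|\lambda_\Gamma(\nu_F)\|_2 \,+\, (1-4m_\mu^{2N_0}).
\]
Since $a, b$ generate a free subgroup $F\cong F_2$ of $\Gamma$, the restriction of $\lambda_\Gamma$ to $F$ decomposes as a direct sum of copies of $\lambda_F$, whence $\|\lambda_\Gamma(\nu_F)\|_2 = \|\lambda_F(\nu_F)\|_2 = \tfrac{\sqrt 3}{2}$ by Kesten's classical computation. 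Combining yields $\|\lambda_\Gamma(\mu)\|_2^{2N_0} \leq 1 - 4(1-\tfrac{\sqrt 3}{2})m_\mu^{2N_0}$, which, using $4 \geq 1$ and the monotonicity of $x \mapsto x^{1/(2N_0)}$ on $[0,1]$, implies the stated inequality.
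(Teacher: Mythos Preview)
Your argument follows the same route as the paper: form the symmetric measure $\mu'=\mu*\check\mu$, use the $C^*$-identity and self-adjointness to get $\|\lambda_\Gamma(\mu)\|_2^{2N_0}=\|\lambda_\Gamma(\mu'^{*N_0})\|_2$, then write $\mu'^{*N_0}$ as a convex combination involving the uniform measure on $\{a,b,a^{-1},b^{-1}\}$ and invoke Kesten's value $\sqrt{3}/2$ for the free group. Your bound $\pi(g)\geq m_\mu^{2N_0}$ for each of the four letters is exactly the paper's path-counting step; you then extract a coefficient $4m_\mu^{2N_0}$ (slightly sharper than the paper's $m_{\mu'}^{N_0}$) before discarding the factor $4$ at the end.

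One point worth flagging: your step $(\mu*\check\mu)(s_i)\geq \mu(s_i)\mu(\id)\geq m_\mu^2$ uses $\mu(\id)\geq m_\mu$, which you rightly note is an extra reading of the hypothesis (since the statement defines $m_\mu=\min_{g\in S}\mu(g)$, not $\min_{g\in S\cup\{\id\}}\mu(g)$). The paper's own proof makes the same implicit move---it writes ``since $S$ contains identity'' and uses $m_{\mu'}\geq m_\mu^2$, which likewise requires $\mu(\id)\geq m_\mu$. So this is not a defect of your argument relative to the paper's; it is a minor imprecision in the lemma's formulation, harmless in the applications (lazy walks with $\mu(\id)\geq 1/2$) as you observe.
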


\begin{proof}
Consider the probability measure $\mu'=\mu \ast \check{\mu}$ and denote by $S'$ its support. Since $S$ contains identity, the set $S'$ is symmetric and it contains $S$. It follows that $(S')^{N_0}$ contains a set $\{a,b,a^{-1},b^{-1}\}$, where $a,b$ are the generators of a free group of rank two.

Since $\mu'$ is symmetric, the operator $\lambda_\Gamma(\mu')$ on $\ell^2(\Gamma)$ is self-adjoint, and since $\lambda_\Gamma(\mu'^{\ast N_0})=\lambda_\Gamma(\mu')^{N_0}$, we have $\|\lambda_\Gamma(\mu')\|_2=
\|\lambda_\Gamma(\mu'^{\ast N_0})\|_2^{1/N_0}$. 
Therefore, 
\begin{equation}
 \|\lambda_\Gamma(\mu)\|_2=\|\lambda_\Gamma(\mu')\|_2^{\frac{1}{2}}=\|\lambda_\Gamma(\mu'^{N_0})\|_2^{\frac{1}{2N_0}}\label{eq.a1}\end{equation}
 
On the other hand,  we write 
$\mu'^{ \ast N_0}=m_{\mu'}^{N_0} \eta+ (1-m_{\mu'}^{N_0})\zeta$, where $\eta$ is the uniform probability measure on $\{a,b,a^{-1}, b^{-1}\}$  and $\zeta$ some probability measure on $\Gamma$. Using the trivial bound $\|\lambda_\Gamma(\zeta)\|_2 \leq 1$, we deduce that 
\begin{equation}\label{eq.a2}
\|\lambda_\Gamma(\mu'^{\ast {N_0}})\|_2\leq  1-\kappa \,m_{\mu'}^{N_0},
\end{equation}
where $1-\kappa=\sqrt{3}/2$ is the  spectral radius  of the uniform probability measure on the free group \cite[Theorem 3]{kesten.symmetric}. 
Combining \eqref{eq.a1} and \eqref{eq.a2}, and using the fact that $m_{\mu'}\geq m_{\mu}^2$, we deduce that 
\begin{equation}\|\lambda_\Gamma(\mu)\|_2 \leq \left(1- \kappa \,m_{\mu}^{2{N_0}} \right)^{\frac{1}{2{N_0}}}. 
\end{equation}
\end{proof}

\begin{proof}[Proof of Corollary  \ref{corol.Koubi}]
Note first that by \cite[Proposition 2.6]{CCMT}, the group $\Gamma$ is a non-elementary hyperbolic group. 
Therefore, the hypothesis of Lemma \ref{lemma.uniformtits.radius} is satisfied for every finite generating set of $\Gamma$ with a uniform constant $N_0=N'_\Gamma$ thanks to Koubi's Theorem
\ref{thm.cite.koubi}. Applying  Lemma  \ref{lemma.uniformtits.radius}  to $\mu_{1/2,\lazy}$ and using the fact that 
$m_{\mu_{1/2,\lazy}} \geq
\frac{1}{2} m_{\mu}$ yields that 
\begin{equation}\label{eq.explicit.bound.koubi}
\|\lambda_\Gamma(\mu_{1/2, \lazy})\|_2\leq  \left( 1- \frac{(1-\sqrt{3}/2)}{2^{N'_{\Gamma}}}  \,m_{\mu}^{N'_\Gamma}\right)^{\frac{1}{2 N'_{\Gamma}}} \leq  \left( 1- \frac{(1-\sqrt{3}/2)}{N'_{\Gamma}2^{N'_{\Gamma}+1}}  \,m_{\mu}^{N'_\Gamma}\right).
\end{equation}

Observe finally that by the discreteness assumption 
of $\Gamma$ and by Berg-Christensen's Corollary   \cite[Corollaire 3]{berg-christensen}, one has that 
$\|\lambda_\Gamma(\mu_{1/2, \lazy})\|_2=\|\lambda_G(\mu_{1/2, \lazy})\|_2$. Using now 
Theorem \ref{thm.main.with.lambda} and the expression of the function $D(\cdot, \cdot)$ given in Remark \ref{rk.D}, we get the desired result with 

$$\alpha_{\Gamma}:=2^{21+4N'_{\Gamma}} \frac{(N'_\Gamma)^4}{(1-\sqrt{3}/2)^4},$$
$A_M=A_0+3$ and $N_\Gamma=4N'_\Gamma$.
\end{proof}

\subsection{Concentration inequalities for random walks on rank-one semisimple linear groups}

For the proof of Corollary \ref{corol.manu}, we will use the following result of Breuillard \cite[Theorem 1.1]{breuillard.strong.tits} and \cite{breuillard.height} (see \cite{breuillard.sl2} for the particular case of $\SL_2$).

\begin{theorem}[\cite{breuillard.strong.tits}]\label{thm.cite.manu}
For every $d \in \N$ there is $N_d \in \N$ such that if $\mathrm{k}$ is any field and $S$ is a finite symmetric subset of $\GL_d(\mathrm{k})$ containing identity, either $S^{N_d}$ contains two elements which generate a non-abelian free group, or the group generated by $S$ contains a finite-index solvable subgroup.
\end{theorem}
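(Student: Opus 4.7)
This is Breuillard's strong Tits alternative \cite{breuillard.strong.tits, breuillard.height}; the plan follows the main lines of his proof. The idea is to execute the classical Tits construction of a free pair in $\GL_d$ while making the word-length bound $N_d$ depend only on $d$, not on $\mathrm{k}$ or on the specific algebraic group generated. The dichotomy ``free pair in $S^{N_d}$ versus finite-index solvable subgroup'' is the exact analogue of Tits' classical alternative, and all the effort goes into the uniformity of $N_d$.

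First dispose of the trivial direction: if $\overline{\langle S \rangle}^Z$ (Zariski closure) admits a finite-index solvable subgroup, we are in the second alternative. Otherwise, passing if needed to the semisimple quotient of the connected component, $\langle S \rangle$ acts Zariski-densely on a semisimple algebraic group of dimension bounded in terms of $d$. Two matrices $a,b \in \GL_d(\mathrm{k})$ that are each \emph{very proximal} (possess a unique largest-modulus eigenvalue with attracting and repelling hyperplanes on $P(\mathrm{k}^d)$) and whose attracting/repelling configurations are in general position generate a free group by the ping-pong lemma; via exterior-power reductions the task becomes to locate such a pair inside $S^{N_d}$. Failure of this ping-pong configuration cuts out a proper Zariski-closed subvariety $V$ in the double algebraic group, whose degree and number of components are bounded in terms of $d$.

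One is thereby reduced to an \emph{effective escape from subvarieties} statement: some word in $S$ of length at most $N_d$ must escape $V$. A classical argument produces some escaping word of a priori length depending on $V$, $\mathrm{k}$ and the ambient group; making $N_d$ depend only on $d$ is the crux and the main obstacle. Breuillard's resolution combines (i) uniform algebraic-geometric bounds on the complexity of the exceptional subvarieties attached to failure of proximality and general position, and, crucially, (ii) the solution of a uniform Lehmer-type problem \cite{breuillard.height}: for any non-virtually-solvable subgroup of $\GL_d(\overline{\mathbb{Q}})$, the normalized (Weil) height of some generator admits a positive lower bound depending only on $d$. An adelic ping-pong argument then combines archimedean and non-archimedean places of the field of definition to rule out arbitrarily slow escape and to produce the universal $N_d$. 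The proof of the Lehmer-type bound (ii) is itself the deepest ingredient, coupling reduction modulo primes with the structure theory of nilpotent and solvable subgroups of $\GL_d$ and with arithmetic-geometric bounds (Mahler measures, heights on projective varieties); with this Diophantine input in hand, everything else amounts to quantifying the classical Tits construction.
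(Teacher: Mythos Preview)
The paper does not give its own proof of this theorem: it is quoted verbatim as an external result of Breuillard \cite{breuillard.strong.tits, breuillard.height} and used as a black box in the proof of Corollary~\ref{corol.manu}. There is therefore no in-paper proof to compare your proposal against.

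That said, as a sketch of Breuillard's actual argument your outline is accurate at the level of a plan: the reduction to a ping-pong configuration, the identification of failure loci as proper subvarieties of bounded complexity, the need for uniform escape from subvarieties, and the decisive input of the height gap theorem \cite{breuillard.height} are indeed the main ingredients. What you have written is not a proof but a correct roadmap; the genuine work (especially the height gap and the adelic ping-pong making $N_d$ independent of the field) lies entirely in the cited references, which is precisely why the present paper does not attempt to reproduce it.
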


We are now able to give the

\begin{proof}[Proof of Corollary \ref{corol.manu}]
Let $d \in \N$ be given, and $\kk$ and $\mathbb{H}\subseteq \mathbb{SL}_d$ be as in the statement. Let the natural number $N'_d$ (depending only on $d$) be as given by Theorem \ref{thm.cite.manu}. Let $\mu$ be a probability measure whose support $S$ is a finite subset of $\mathbb{H}(\mathrm{k})$ that generates a discrete non-amenable subgroup $\Gamma$ of $\mathbb{H}(k)$. 
Let $\mu':=\mu_{1/2, \lazy}$, $\mu'':=\mu'\ast \mu'^{-1}$, denote by $S'$ the support of $\mu'$ and $S''$ the support of $\mu''$. Notice that the finite set $S''$ is symmetric, contains the identity and it generates the group $\Gamma$. Therefore it follows by Theorem \ref{thm.cite.manu} that $S''^{N'_d}$ contains two elements that generate a free group of rank two where the constant $N'_d \in \N$ only depends on the dimension $d$. Applying Lemma \ref{lemma.uniformtits.radius}, we get that 
$$\|\lambda_\Gamma(\mu'')\|_2 \leq    \left(1- (1-\sqrt{3}/2) m_{\mu''}^{2{N'_d}} \right)^{\frac{1}{2{N'_d}}}.$$
Since $\|\lambda_\Gamma(\mu'')\|_2=\|\lambda_\Gamma(\mu')\|_2^2$ and $m_{\mu''}\geq m_{\mu'}^2\geq m_{\mu}^2/4$, we deduce
\begin{equation}\label{eq.explicit.bound.manu}
\|\lambda_\Gamma(\mu')\|_2 \leq 1-m_{\mu}^{4N'_d} \frac{1-\sqrt{3}/2}{4N'_d 2^{4N'_d}}.
\end{equation}
As in the proof of Corollary \ref{corol.Koubi}, by the discreteness assumption 
on $\Gamma$ it follows that 
$\|\lambda_\Gamma(\mu_{1/2, \lazy})\|_2=\|\lambda_G(\mu_{1/2, \lazy})\|_2$. Therefore,
a direct application of Theorem   \ref{thm.main.with.lambda} (with $r=1/2$ on the right hand side of the theorem) and   the expression of the function $D(\cdot, \cdot)$ given in Remark \ref{rk.D} concludes the proof with 
$N_d=16 N_d'$, $\alpha_d=2^{25+N_d} N_d^4 \frac{1}{(1-\sqrt{3}/2)^4}$ and $A_{\mathbb{H}, \kk}=A_0/3+3$, where $A_0$ is the constant  defined in Remark \ref{rk.explicit.C} applied for the isometry group $G$ of the symmetric space $M$ associated to the rank-one group $\mathbb{H}(\kk)$. 
\end{proof}

\section{Probabilistic free subgroup theorem}\label{sec.tits}

The goal of this section is to prove Theorem \ref{thm.proba.tits} from Introduction. To do this, we start by proving a general result which shows that uniform large deviation estimates for the Busemann cocycle together with positivity of the drift imply a probabilistic free subgroup theorem for isometries of Gromov hyperbolic spaces.

\subsection{Free subgroups from uniform large deviations}\label{subsec.ULD}

Let $(M,d)$ be a $\delta$-hyperbolic metric space and fix $o \in M$. Let $\mu$ be a Borel probability measure on $\Isom(M)$ endowed with the topology of pointwise convergence. 

We introduce the following uniform large deviation hypothesis for a probability measure $\mu$ with finite first order moment on $\Isom(M)$:\\[-5pt]

\textbf{ULD:} For every $\epsilon>0$ and $n \in \N$, there exist non-negative constants $p_n(\epsilon)$ such that for every $\epsilon>0$, $p_n(\epsilon) \to 0$ as $n \to \infty$ and 
\begin{equation}\label{eq.ULD}
\sup_{y \in M}\mathbb{P}(|\sigma(R_n^{\pm 1},y)-n\ell(\mu)| \geq n\epsilon) \leq p_n(\epsilon),
\end{equation}
where $\sigma$ denotes the Busemann cocycle. Note that, whenever the \textbf{ULD} hypothesis is satisfied, by replacing, for every $\epsilon>0$, $p_n(\epsilon)$ by $\sup_{m \geq n}p_n(\epsilon)$, we can and we will suppose that it is satisfied with a non-increasing sequence $p_n(\epsilon)$.

The rest of \S \ref{subsec.ULD} is devoted to the proof of the following

\begin{proposition}\label{prop.ULD.tits}
Let $(M,d)$ be a $\delta$-hyperbolic metric space and $\mu$ a probability measure on $\Isom(M)$ with finite first order moment. Suppose that $\mu$ satisfies the hypothesis \textbf{ULD} and $\ell(\mu)>0$. Then, for every integer $n > 2+ \frac{16\delta}{\ell(\mu)}  $, we have
$$(\mu^{\ast n}\otimes \mu^{\ast n} )\left\{(\gamma_1, \gamma_2) : \langle \gamma_1, \gamma_2 \rangle \, \textrm{is free} \right\}> 1-25p_{\lfloor n/2 \rfloor}(\ell(\mu)/8).$$
\end{proposition}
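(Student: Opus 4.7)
The plan is to argue via a quantitative ping-pong in Gromov hyperbolic geometry, using a half-decomposition of the two random walks and the Busemann cocycle identity together with \textbf{ULD} to control the relevant Gromov products. Set $\epsilon:=\ell(\mu)/8$ and $m:=\lfloor n/2\rfloor$, and decompose the two walks as $\gamma_i=\alpha_i\beta_i$ (for $i=1,2$), where $\alpha_i$ is the product of the first $m$ factors and $\beta_i$ the product of the remaining $n-m$ factors; then $\alpha_1,\beta_1,\alpha_2,\beta_2$ are independent.

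First I would introduce a good event $\mathcal{G}$ combining two types of Busemann-cocycle controls. The \emph{full-length} events include $|\kappa(\gamma_i)-n\ell(\mu)|<n\epsilon$ for $i=1,2$ (noting $\sigma(R_n,o)=\sigma(R_n^{-1},o)=\kappa(R_n)$, so both inequalities in the \textbf{ULD} hypothesis yield the same event), together with $|\sigma(\gamma_j^{-t},\gamma_i^{s}o)-n\ell(\mu)|<n\epsilon$ for $i\neq j\in\{1,2\}$ and $s,t\in\{\pm 1\}$; conditioning on $\gamma_i$ (independent of $\gamma_j$) turns $\gamma_i^{s}o$ into a fixed point to which the uniform-in-$y$ \textbf{ULD} applies. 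These up-to-$13$ events each fail with probability $\leq p_n(\epsilon)$. The \emph{half-length} events control the building blocks at relevant mid-points, e.g.\ $|\kappa(\alpha_i)-m\ell(\mu)|<m\epsilon$, the analogue for $\beta_i$, and $|\sigma(\beta_i,\alpha_i o)-(n-m)\ell(\mu)|<(n-m)\epsilon$ (the latter via conditioning on $\alpha_i$ and applying \textbf{ULD} to the independent walk $\beta_i$ at the fixed point $\alpha_i o$), up to $8$ in total and each failing with probability $\leq p_m(\epsilon)$. A union bound yields $\mathbb{P}(\mathcal{G}^{c})\leq 13\,p_n(\epsilon)+8\,p_m(\epsilon)$.

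On $\mathcal{G}$ I would next verify the geometric inputs to a ping-pong argument. The identity $d(\alpha_i o,\beta_i^{-1}o)=\kappa(\alpha_i)+\sigma(\beta_i,\alpha_i o)$ (a direct consequence of $\sigma(g,y)=d(gy,o)-d(y,o)$ applied after moving the points by $\beta_i$) combined with the half-length controls shows $d(\alpha_i o,\beta_i^{-1}o)\geq n\ell(\mu)-O(n\epsilon)$, hence $(\alpha_i o\mid\beta_i^{-1}o)_o\leq O(n\epsilon)$. The estimate $\kappa(\alpha_i)+\kappa(\beta_i)-\kappa(\gamma_i)\leq O(n\epsilon)$ places $\alpha_i o$ within $O(\delta)+O(n\epsilon)$ of a geodesic $[o,\gamma_i o]$, and symmetrically for $\beta_i^{-1}o$ and $[o,\gamma_i^{-1}o]$. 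Chaining the $\delta$-thin-triangle inequality along the ordered chain $\gamma_i o,\ \alpha_i o,\ o,\ \beta_i^{-1}o,\ \gamma_i^{-1}o$ yields $d(\gamma_i o,\gamma_i^{-1}o)\geq 2\kappa(\gamma_i)-O(\delta)-O(n\epsilon)$, so $(\gamma_i o\mid\gamma_i^{-1}o)_o\leq O(\delta)+O(n\epsilon)$. For the cross-separation, the identity $d(\gamma_i^{s}o,\gamma_j^{t}o)=\kappa(\gamma_i)+\sigma(\gamma_j^{-t},\gamma_i^{s}o)$ combined with the full-length controls gives $(\gamma_i^{s}o\mid\gamma_j^{t}o)_o\leq n\epsilon=n\ell(\mu)/8$. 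To conclude, one invokes a standard ping-pong lemma in a $\delta$-hyperbolic space (using subsets of $M$ defined by Gromov-product thresholds around the four directions $\gamma_i^{\pm 1}o$): if all Gromov products $(\gamma_i^{s}o\mid\gamma_j^{t}o)_o$, including self-products with $i=j,\ s=-t$, are strictly less than $\tfrac{1}{2}\min_{i}\kappa(\gamma_i)-C_0\delta$ for a universal constant $C_0$, then $\langle\gamma_1,\gamma_2\rangle$ is free. On $\mathcal{G}$ we have $\min_{i}\kappa(\gamma_i)\geq 7n\ell(\mu)/8$ and all relevant Gromov products are at most $n\ell(\mu)/8+O(\delta)$, so the required inequality reduces to $5n\ell(\mu)/16>(C_0+O(1))\delta$; the hypothesis $n>2+16\delta/\ell(\mu)$ is tuned to absorb these $\delta$-slacks.

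The main obstacle is the quantitative hyperbolic-geometry step on $\mathcal{G}$: translating the analytic cocycle estimates into a chain-of-thin-triangles lower bound on $d(\gamma_i o,\gamma_i^{-1}o)$ and tracking $\delta$-slacks so that the universal constant $C_0$ appearing in the ping-pong lemma matches the numerical value $16$ in the hypothesis. A secondary care-point is the exact enumeration of good-event conditions required to match $13\,p_n(\epsilon)+8\,p_m(\epsilon)$ in the union bound, which relies on coincidences such as $\sigma(R_n,o)=\sigma(R_n^{-1},o)=\kappa(R_n)$ to collapse apparently distinct ULD instances into a single event.
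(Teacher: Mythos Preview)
Your proposal is correct and follows essentially the same route as the paper: a ping-pong criterion (the paper's Lemma~\ref{lemma.ping-pong}) fed by Gromov-product controls, with the cross-products $(\gamma_i^{s}o\mid\gamma_j^{t}o)_o$ handled via independence and \textbf{ULD} at time $n$, and the self-products $(\gamma_i o\mid\gamma_i^{-1}o)_o$ handled by the half-decomposition and \textbf{ULD} at times $n$ and $\lfloor n/2\rfloor$. The only cosmetic difference is that the paper bounds $(\gamma_i o\mid\gamma_i^{-1}o)_o$ by a direct application of the four-point inequality to the chain $\gamma_i o,\ \alpha_i o,\ \beta_i^{-1}o,\ \gamma_i^{-1}o$ together with the explicit identity $(\gamma_i o\mid\alpha_i o)_o=\tfrac{1}{2}(\kappa(\gamma_i)+\kappa(\alpha_i)-\kappa(\beta_i))$, whereas you phrase the same step geometrically (``$\alpha_i o$ close to $[o,\gamma_i o]$'' and thin-triangle chaining); the two are equivalent, and the paper's formulation makes the $\delta$-bookkeeping slightly cleaner for matching the constant~$16$.
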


Before proceeding with the proof, we make a few remarks on its hypotheses.


\begin{remark}[About \textbf{ULD} hypothesis]

1. Theorem \ref{thm.main.text} shows that  the \textbf{ULD} hypothesis is verified, with explicit constants,  for random walks on a proper hyperbolic space $M$ such that $\Isom(M)$ acts cocompactly on $M$. This explicit aspect will be crucial for the quantitative probabilistic free subgroup Theorem \ref{thm.proba.tits}.\\[3pt]
2. However, \textbf{ULD} (with qualitative constants) is satisfied also when $M$ is not proper:
using cocycle large deviation results of \cite{BQ.CLT.linear}, it can be shown (see \cite[Proposition 2.8]{horbez}) that if $M$ is a separable geodesic   hyperbolic space, then \textbf{ULD} hypothesis holds for any countably supported non-elementary probability measure $\mu$ with finite second order moment. Moreover, in this case,   $\ell(\mu)>0$ (\cite[Theorem 1.2]{maher-tiozzo})
\end{remark}

We will show that with high probability, two independent random walks $R_n$ and $R'_n$ will play ping-pong on the space $M$. To set the random ping-pong table, we need some geometric lemmas. Let $(M,d)$ be a $\delta$-hyperbolic space,  fix $o\in M$ and let $C>0$. Recall that the shadow of $y\in M$ seen from $x\in X$ is the following subset of $M$; 
$$\mathcal{O}_C(x,y)=\{z\in M : (z|y)_x\geq d(x,y)-C\}.$$
It is immediate that \begin{equation}\mathcal{O}_C(x,y)=\{z\in M : (z|x)_y\leq C\}.\label{shadow2}\end{equation}
Observe that $\mathcal{O}_C(x,y)=M$ when $C\geq d(x,y)$. 
 
We will use the following lemma to construct Schottky subgroups of $\Isom(M)$.

\begin{lemma}\label{lemma.ping-pong} Let $(M,d)$ be a $\delta$-hyperbolic space and $o\in M$. Let $\gamma_1, \gamma_2\in \Isom(M)$. Suppose that there exists $D>0$ such that
\begin{enumerate}
\item  for every $i \neq j \in\{1, 2\}$ and $\epsilon_{1},\epsilon_2 \in \{-1, 1\}$, 
$(\gamma_i^{\epsilon_1} \cdot o | \gamma_j^{\epsilon_2}\cdot o)_o\leq D$. 
\item  for every $i=\{1,2\}$,  $(\gamma_i \cdot o | \gamma_i^{-1} \cdot o)_o\leq D$,
\item $0<\frac{1}{2}\max\{\kappa(\gamma_1), \kappa(\gamma_2)\}<\min\{\kappa(\gamma_1), \kappa(\gamma_2)\} -D-\delta$. 
\end{enumerate}
Then $\langle \gamma_1, \gamma_2\rangle$ is non-abelian free group. 
\end{lemma}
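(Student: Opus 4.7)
My plan is to recognize the hypotheses as setting up a classical Klein--Tits ping-pong configuration on $M$ itself, via the four shadow sets
\[
A_i^{\epsilon} := \mathcal{O}_C(o,\gamma_i^{\epsilon}\cdot o), \qquad i\in\{1,2\},\ \epsilon\in\{-1,+1\},
\]
for the single choice $C := \tfrac12\max\{\kappa(\gamma_1),\kappa(\gamma_2)\}$, and then invoke the ping-pong lemma with basepoint $o$. The only work is to check two standard requirements: pairwise disjointness of the four shadows, and the inclusions $\gamma_i^{\pm 1}(M\setminus A_i^{\mp 1})\subseteq A_i^{\pm 1}$.

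Disjointness will follow directly from the $\delta$-hyperbolicity inequality and hypotheses (1)--(2): if $z$ lay in both $A_i^{\epsilon_1}$ and $A_j^{\epsilon_2}$ with $(i,\epsilon_1)\neq(j,\epsilon_2)$, then
\[
(\gamma_i^{\epsilon_1} o\,|\,\gamma_j^{\epsilon_2} o)_o \;\geq\; \min\{\kappa(\gamma_i),\kappa(\gamma_j)\}-C-\delta,
\]
and combining with the upper bound $D$ supplied by (1) (when $i\neq j$) or (2) (when $i=j$, $\epsilon_1\neq\epsilon_2$) would force $C\geq\min\{\kappa(\gamma_1),\kappa(\gamma_2)\}-D-\delta$, contradicting hypothesis (3) given our choice of $C$. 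The fact that $o$ itself lies outside every $A_i^{\epsilon}$ is immediate from $(o\,|\,\gamma_i^{\epsilon} o)_o=0$ together with (3).

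The mapping property rests on the elementary identity (obtained by expanding both sides)
\[
(\gamma_i z\,|\,\gamma_i o)_o \;+\; (z\,|\,\gamma_i^{-1} o)_o \;=\; \kappa(\gamma_i),
\]
where I have used the isometric invariance $(\gamma_i z\,|\,o)_{\gamma_i o}=(z\,|\,\gamma_i^{-1} o)_o$. If $z\notin A_i^{-1}$, i.e.\ $(z\,|\,\gamma_i^{-1} o)_o<\kappa(\gamma_i)-C$, then this identity yields $(\gamma_i z\,|\,\gamma_i o)_o>C$. Since our choice of $C$ satisfies $C\geq\kappa(\gamma_i)/2$, the chain $(\gamma_i z\,|\,\gamma_i o)_o>C\geq\kappa(\gamma_i)-C$ places $\gamma_i z$ inside $A_i^{+1}$; the analogous inclusion for $\gamma_i^{-1}$ is symmetric.

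The delicate point, and what I see as the only (mild) obstacle, is that the single constant $C$ must simultaneously be \emph{small enough} to separate the four shadows and \emph{large enough} to drive the mapping property: the former requires $C<\min\{\kappa(\gamma_1),\kappa(\gamma_2)\}-D-\delta$ while the latter requires $C\geq \tfrac12\max\{\kappa(\gamma_1),\kappa(\gamma_2)\}$, and simultaneous feasibility is \emph{exactly} hypothesis (3). Once both are established, the ping-pong lemma applied to the basepoint $o$ shows that any nonempty reduced word in $\gamma_1^{\pm 1},\gamma_2^{\pm 1}$ carries $o$ into some $A_i^{\epsilon}$, hence is nontrivial, and $\langle\gamma_1,\gamma_2\rangle$ is a non-abelian free group as claimed.
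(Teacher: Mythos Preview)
Your proof is correct and follows essentially the same approach as the paper: both set up the classical ping-pong configuration on $M$ using the four shadows $\mathcal{O}_C(o,\gamma_i^{\pm}o)$, verify disjointness via the $\delta$-hyperbolicity inequality together with hypotheses (i)--(ii), and obtain the mapping inclusions from the identity $(\gamma z\,|\,\gamma o)_o + (z\,|\,\gamma^{-1}o)_o = \kappa(\gamma)$. The only cosmetic difference is that the paper isolates those two verifications as separate lemmas and chooses $C$ strictly inside the interval $\bigl(\tfrac12\max\kappa(\gamma_i),\ \min\kappa(\gamma_i)-D-\delta\bigr)$, whereas you take $C$ at the left endpoint and argue inline; this does not affect the argument.
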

 
\begin{remark}
Assumptions (ii) and (iii) have the consequence that $\gamma_1$ and $\gamma_2$ are both hyperbolic isometries. Indeed it follows from (iii) that we have $\kappa(\gamma_i) >  2D+2\delta$ for $i=1,2$. Together with (ii), this implies $\kappa(\gamma_i^2)>\kappa(\gamma_i)+2\delta$, which in turn imply that $\gamma_i$ is a hyperbolic isometry by \cite[Lemma 2.2]{hyperbolic-book}.
\end{remark}
The proof of Lemma \ref{lemma.ping-pong} will follow from intermediate lemmas inspired from \cite[Appendix A]{BMSS}. 
\begin{lemma}\label{lemma.image.of.shadow} Let $(M,d)$ be any metric space and $\gamma\in \Isom(M)$. Then for every constant $C\geq 0$, we have 
$$\gamma (M\setminus \mathcal{O}_C(o, \gamma^{-1} \cdot o)) \subseteq \mathcal{O}_{\kappa(\gamma)-C}(o,\gamma \cdot o).$$ 
\end{lemma}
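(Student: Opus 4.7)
The plan is to reduce the containment to the dual description of shadows given in \eqref{shadow2} and to exploit the isometry invariance of the Gromov product. The key underlying observation is the identity $(z \mid y)_x + (z \mid x)_y = d(x,y)$, which is immediate from the definition of the Gromov product and which is what makes the two formulations of $\mathcal{O}_C(x,y)$ equivalent. Choosing the right formulation is what will make the whole argument transparent.

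First I would unpack the hypothesis: $z \notin \mathcal{O}_C(o, \gamma^{-1}\cdot o)$ means $(z \mid \gamma^{-1}\cdot o)_o < d(o, \gamma^{-1}\cdot o) - C = \kappa(\gamma) - C$, using that $\kappa(\gamma^{-1}) = \kappa(\gamma)$ since $\gamma$ is an isometry. Next I would invoke the isometry invariance $(\gamma a \mid \gamma b)_{\gamma c} = (a \mid b)_c$, applied with $a = z$, $b = \gamma^{-1}\cdot o$, $c = o$, to obtain the identity $(\gamma z \mid o)_{\gamma \cdot o} = (z \mid \gamma^{-1}\cdot o)_o$.

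Combining the two observations yields $(\gamma z \mid o)_{\gamma \cdot o} < \kappa(\gamma) - C$, and the alternative form \eqref{shadow2} of the shadow says exactly that $\gamma z \in \mathcal{O}_{\kappa(\gamma) - C}(o, \gamma\cdot o)$. Since $z$ was arbitrary outside $\mathcal{O}_C(o, \gamma^{-1}\cdot o)$, this establishes the inclusion. There is no real obstacle here: the statement is essentially a one-line calculation once the correct formulation of the shadow is selected, and the only content is the bookkeeping of transferring the Gromov product across the isometry $\gamma$.
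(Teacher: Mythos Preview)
Your proof is correct and essentially the same as the paper's: both reduce the inclusion to the dual description \eqref{shadow2} of shadows and the identity $(z\mid y)_x + (z\mid x)_y = d(x,y)$. The only cosmetic difference is that the paper first rewrites $\gamma(M\setminus \mathcal{O}_C(o,\gamma^{-1}\cdot o)) = M\setminus \mathcal{O}_C(\gamma\cdot o, o)$ and then applies the complementarity identity directly, whereas you keep the point on the domain side and transport the Gromov product via isometry invariance.
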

\begin{proof}
We have  clearly for every $C\geq 0$, 
$$\gamma \cdot (M\setminus \mathcal{O}_C(o, \gamma^{-1}\cdot o))=M\setminus \mathcal{O}_C(\gamma \cdot o, o ).$$
So let $x\not\in  \mathcal{O}_C(\gamma \cdot o, o)$. By \eqref{shadow2} this means that $(x | \gamma \cdot o)_o>C$.
Using the identity
$$\kappa(\gamma)=(x|o)_{\gamma \cdot o}+(\gamma \cdot o | x)_{o}, $$
we deduce that $(x|o)_{\gamma \cdot o}<\kappa(\gamma)-C$. Hence by \eqref{shadow2}, we have $x\in \mathcal{O}_{\kappa(\gamma)-C}(o, \gamma \cdot o)$ as desired.  
\end{proof}

\begin{lemma}\label{lemma.disjoint} Let $(M,d)$ be a $\delta$-hyperbolic space. 
Let $\gamma_1, \gamma_2 \in Isom(M)$, $D>0$. Denote $\kappa_{1,2}:=\min\{\kappa(\gamma_1), \kappa(\gamma_2)\}$. 
$$\begin{rcases}
(\gamma_1 \cdot o | \gamma_2 \cdot o)_o \leq D\\
    \kappa_{1,2}> D+\delta \end{rcases} 
     \Longrightarrow \forall 0<C<\kappa_{1,2}-D-\delta,\, \mathcal{O}_C(o, \gamma_1 \cdot o) \cap \mathcal{O}_C(o, \gamma_2 \cdot o)=\emptyset.$$
 \end{lemma}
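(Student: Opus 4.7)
The plan is to argue by contradiction using the $\delta$-hyperbolicity inequality applied to the three points $\gamma_1 \cdot o$, $\gamma_2 \cdot o$, and a hypothetical common point of the two shadows, based at $o$.

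Concretely, suppose there exists $z \in \mathcal{O}_C(o, \gamma_1 \cdot o) \cap \mathcal{O}_C(o, \gamma_2 \cdot o)$. Unpacking the definition, this gives $(z \mid \gamma_1 \cdot o)_o \geq \kappa(\gamma_1) - C$ and $(z \mid \gamma_2 \cdot o)_o \geq \kappa(\gamma_2) - C$, so both Gromov products are at least $\kappa_{1,2} - C$. Now I would apply the defining $\delta$-hyperbolicity inequality \eqref{eq.defining.eq} to the triple $(\gamma_1 \cdot o, z, \gamma_2 \cdot o)$ based at $o$, which yields
\[
(\gamma_1 \cdot o \mid \gamma_2 \cdot o)_o \;\geq\; \min\bigl((\gamma_1 \cdot o \mid z)_o,\, (z \mid \gamma_2 \cdot o)_o\bigr) - \delta \;\geq\; \kappa_{1,2} - C - \delta.
\]
Combined with the hypothesis $(\gamma_1 \cdot o \mid \gamma_2 \cdot o)_o \leq D$, this forces $C \geq \kappa_{1,2} - D - \delta$, contradicting the choice $C < \kappa_{1,2} - D - \delta$. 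Hence no such $z$ exists, and the two shadows are disjoint.

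There is no real obstacle here; the lemma is essentially a one-line application of the $\delta$-hyperbolicity of the Gromov product, and the only care needed is to correctly unwind the two equivalent descriptions of the shadow recorded just above (and used in the proof of Lemma \ref{lemma.image.of.shadow}) so that the Gromov products appearing in the hyperbolicity inequality are all based at the same point $o$.
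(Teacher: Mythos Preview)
Your proof is correct and is essentially the same as the paper's: both apply the defining $\delta$-hyperbolicity inequality to the triple $(\gamma_1\cdot o,\gamma_2\cdot o,z)$ based at $o$, the only cosmetic difference being that the paper phrases it directly (take $x\in\mathcal{O}_C(o,\gamma_1\cdot o)$ and show $x\notin\mathcal{O}_C(o,\gamma_2\cdot o)$) rather than by contradiction.
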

 
 \begin{proof}
 Assume that $(\gamma_1 \cdot o, \gamma_2 \cdot o)_o \leq D$ and $\kappa_{1,2}\geq D+C+\delta$. 
Let $x\in \mathcal{O}_C(o, \gamma_1\cdot o)$. By definition, $(x,\gamma_1\cdot o)_o\geq \kappa(\gamma_1)-C> D+\delta $. But by  $\delta$-hyperbolicity, 
$$\min\{(x | \gamma_2\cdot o)_o, (x | \gamma_1\cdot o)_o\}\leq (\gamma_1 \cdot o |  \gamma_2 \cdot o)_o+\delta\leq D+\delta.$$ Thus $(x | \gamma_2 \cdot o)_o\leq D+\delta<\kappa(\gamma_2)-C$ and hence $x\not\in \mathcal{O}_C(o, \gamma_2\cdot o)$ which proves the claim.
\end{proof}
 
Now we are able give 
 
\begin{proof}[Proof of Lemma \ref{lemma.ping-pong}]
Using the assumption (iii), fix any real $C$ such that 
$$\frac{1}{2}\max_{i=1,2}\{\kappa(\gamma_i)\} < C< \min_{i=1,2}\{\kappa(\gamma_i)\}-D-\delta.$$  For $i=1,2$, denote   $\mathcal{O}_i=\mathcal{O}_C(o, \gamma_i \cdot o)$ and $\mathcal{O}_i^{<}=\mathcal{O}_C(o, \gamma_i^{-1} \cdot o)$. By Lemma \ref{lemma.disjoint}, these are four disjoint subsets of $M$. Moreover, 
by Lemma \ref{lemma.image.of.shadow} and the choice of the constant $C$, the following inclusions  hold  every $i=1,2$, 
$$\gamma_i (M\setminus \mathcal{O}_i^{<}) \subseteq \mathcal{O}_i$$ and 
$$\gamma_i^{-1} (M\setminus \mathcal{O}_i) \subseteq \mathcal{O}_i^<.$$ Thus pair of elements
$\gamma_1, \gamma_2$ satisfies the hypotheses of the classical ping-pong lemma and therefore 
they generate then a free subgroup of $\Isom(M)$. 
\end{proof}

With Lemma \ref{lemma.ping-pong} at hand, we focus now on showing that the random walks $R_n, R'_n, R_n^{-1}, {R'_n}^{-1}$ satisfy assumptions (i)--(iii) of Lemma \ref{lemma.ping-pong} with $D=n\ell(\mu)/8+2\delta$, with probability tending to one depending on the constants $p_n(\epsilon)$ appearing in the hypothesis \textbf{ULD}. Before that, we  provide some estimates on the random walk $R_n$ based on uniform large deviation estimates.

\begin{lemma}\label{lemma.estimate.rw}
Let $(M,d)$ be a $\delta$-hyperbolic metric space and let $\mu$ be a probability measure on $\Isom(M)$ with finite first order moment and satisfying the hypothesis \textbf{ULD}. Then, the following estimates hold. 
\begin{enumerate}
\item For every $\epsilon>0$ and every $n\in \N$,  
$$\sup_{y\in M}
    \p\left( (R_n \cdot o| y)_o\geq \epsilon n\right) \leq  2p_n(\epsilon).$$
\item    For every $0<\epsilon\leq \ell(\mu)/8$ and every $n> 2+\frac{8 \delta}{\ell(\mu)}$,  $$
    \p\left( (R_n \cdot o| R_n^{-1}\cdot o)_o\geq \epsilon n + 2\delta\right)  \leq 8p_{\lfloor n/2 \rfloor}(\epsilon) .$$
\end{enumerate}
\end{lemma}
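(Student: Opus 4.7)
Part (i) is immediate from the identity
$$(g\cdot o\mid y)_o = \tfrac{1}{2}\bigl(\kappa(g) - \sigma(g^{-1},y)\bigr), \qquad g\in\Isom(M),\ y\in M,$$
already recorded as \eqref{eq.horbez2}, together with the equality $\kappa(R_n) = \sigma(R_n,o)$. The event $(R_n\cdot o\mid y)_o \geq n\epsilon$ rewrites as $\sigma(R_n,o) - \sigma(R_n^{-1},y) \geq 2n\epsilon$, which forces either $\sigma(R_n,o) \geq n\ell(\mu) + n\epsilon$ or $\sigma(R_n^{-1},y) \leq n\ell(\mu) - n\epsilon$. A union bound via \textbf{ULD}, applied once at the basepoint $o$ and once at the given $y$, yields the stated constant $2p_n(\epsilon)$, uniformly in $y$.

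For Part (ii), I would split $R_n = R_m \cdot R_{m+1,n}$ with $m := \lfloor n/2 \rfloor$ and $R_{m+1,n} := X_{m+1}\cdots X_n$, the two factors being independent. The key geometric input is the quadrilateral inequality
$$\min\bigl\{(x\mid z_1)_o,\,(x\mid y)_o,\,(y\mid z_2)_o\bigr\} \leq (z_1\mid z_2)_o + 2\delta,$$
valid for any four points $x,y,z_1,z_2 \in M$ and obtained by iterating twice the $4$-point condition $(a\mid b)_o \geq \min\{(a\mid c)_o,(c\mid b)_o\}-\delta$. Specialising to $x := R_n\cdot o$, $y := R_n^{-1}\cdot o$, $z_1 := R_m\cdot o$, $z_2 := R_{m+1,n}^{-1}\cdot o$ produces the inequality $\min\{A,B,C\} \leq D + 2\delta$, where $A := (R_n o\mid R_n^{-1} o)_o$ is the quantity to control, $B := (R_n o\mid R_m o)_o$, $C := (R_n^{-1} o\mid R_{m+1,n}^{-1} o)_o$, and $D := (R_m o\mid R_{m+1,n}^{-1} o)_o$.

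An isometry-invariant computation (applying $R_m^{-1}$ and $R_{m+1,n}$ respectively) yields the explicit formulas
$$B = \tfrac{1}{2}\bigl(\kappa(R_n) + \kappa(R_m) - \kappa(R_{m+1,n})\bigr), \qquad C = \tfrac{1}{2}\bigl(\kappa(R_n) + \kappa(R_{m+1,n}) - \kappa(R_m)\bigr).$$
Three applications of \textbf{ULD} (for $R_n$, $R_m$, and $R_{m+1,n}$, the last having law $\mu^{\ast (n-m)}$) give $B, C \geq m\ell(\mu) - n\epsilon$ with total error at most $p_n(\epsilon) + p_m(\epsilon) + p_{n-m}(\epsilon)$; meanwhile the independence of $R_m$ and $R_{m+1,n}$ lets me condition on the latter and apply Part (i) to $R_m$, yielding $\p(D \geq m\epsilon) \leq 2p_m(\epsilon)$. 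The arithmetic hypotheses $\epsilon \leq \ell(\mu)/8$ and $n > 2 + 8\delta/\ell(\mu)$ are calibrated exactly so that $m\ell(\mu) - n\epsilon > m\epsilon + 2\delta$; on the intersection of the good events this forces the minimum in the quadrilateral estimate to be attained by $A$, whence $A \leq D + 2\delta < m\epsilon + 2\delta \leq n\epsilon + 2\delta$, ruling out the bad event. Bounding $p_{n-m}$ by $p_{\lfloor n/2\rfloor}$ and being slightly generous with coefficients absorbs everything into the stated $4p_n(\epsilon) + 4p_{\lfloor n/2\rfloor}(\epsilon)$.

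The main obstacle is selecting the bridge points $z_1, z_2$: they are pinned down by the double requirement that (a) the Gromov products $B$ and $C$ unfold, via isometry invariance, into explicit combinations of $\kappa$-values amenable to \textbf{ULD}, and (b) the off-diagonal product $D$ splits into two independent factors so that Part (i) can be applied conditionally. The symmetric split $m = \lfloor n/2 \rfloor$ is precisely what balances the two halves and yields the threshold $n > 2 + 8\delta/\ell(\mu)$.
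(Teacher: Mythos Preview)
Your proposal is correct and follows essentially the same approach as the paper: identity for $(go\mid y)_o$ plus \textbf{ULD} for part (i), and the same quadrilateral inequality with bridge points $R_{\lfloor n/2\rfloor}\cdot o$ and $R_{\lfloor n/2\rfloor+1,n}^{-1}\cdot o$ for part (ii). One small difference worth noting: you bound $D$ at threshold $m\epsilon$ and invoke part (i) for $R_m$ to get $2p_m(\epsilon)$, whereas the paper bounds $D$ at threshold $n\epsilon$ and claims $2p_n(\epsilon)$ ``from (i)''---your version is actually the cleaner application of (i), since (i) as stated concerns $R_k$ at threshold $k\epsilon$. Both routes feed into the same final constant $4p_n(\epsilon)+4p_{\lfloor n/2\rfloor}(\epsilon)$, modulo the harmless $\lceil n/2\rceil$ versus $\lfloor n/2\rfloor$ discrepancy that both arguments elide (and which disappears in the applications, where $p_n$ is non-increasing).
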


\begin{proof}

(i)   
Using the identity  $$(go| y)_o = \frac{1}{2} (\kappa(g)-\sigma(g^{-1},y))$$
which holds for any $g\in \Isom(M)$ and $y\in M$, the desired inequality follows from \textbf{ULD} hypothesis applied to both $\kappa(R_n)=\sigma(R_n,o)$ and $\sigma(R_n^{-1},y)$.

(ii) Let $\epsilon>0$ and $n\in \N$. 
For every $1\leq m< n$, we denote 
$R_{m,n}:=X_m \cdots X_n$. 
By $\delta$-hyperbolicity, we have  
\begin{equation}\label{hyperbolicity.relation2}
\begin{aligned}
\min &\left\{(R_n\cdot o | R_n^{-1} \cdot o)_o, (R_n \cdot o| R_{\lfloor n/2 \rfloor} \cdot o)_o, (R_n^{-1} \cdot o| R_{\lfloor n/2 \rfloor +1,n}^{-1} \cdot o)_o\right\}\\ &\leq (R_{\lfloor n/2 \rfloor}\cdot o | R_{\lfloor n/2 \rfloor+1,n}^{-1} \cdot o)_o+2\delta.
\end{aligned}
\end{equation}
On the one hand,  since $R_{\lfloor n/2 \rfloor}=X_1 \cdots X_{\lfloor n/2 \rfloor}$ and 
$R_{\lfloor n/2 \rfloor+1,n}=X_{\lfloor n/2 \rfloor+1} \cdots X_{n}$ are independent random variables, we deduce from (i) that 
\begin{equation}\label{b11}\p\left((R_{\lfloor n/2 \rfloor}\cdot o | R_{\lfloor n/2 \rfloor+1,n}^{-1} \cdot o)_o\geq \epsilon n\right)\leq 2p_{\lfloor n/2\rfloor}(\epsilon).
\end{equation}
On the other hand, we claim that if $0<\epsilon< \ell(\mu)/8$ and $n>8\delta/\ell(\mu)$, then the following holds: 
\begin{equation}\label{max}\p \left(\min \left\{ (R_n^{-1}\cdot o | R_{\lfloor n/2 \rfloor+1,n}^{-1}\cdot  o)_o, (R_n\cdot o | R_{\lfloor n/2 \rfloor}\cdot  o)_o \right\}\leq \epsilon n+ 2\delta\right) \leq 2p_n(\epsilon)+4p_{\lfloor n/2 \rfloor}(\epsilon).\end{equation}
This will finish the proof of (ii) by combining \eqref{hyperbolicity.relation2}, \eqref{b11} and \eqref{max}. We now check \eqref{max}. 
We have that  
\begin{equation}\label{interm}(R_n\cdot o | R_{\lfloor n/2 \rfloor} \cdot o)_o=\frac{\kappa(R_n)+\kappa(R_{\lfloor n/2 \rfloor}) - \kappa(R_{\lfloor n/2 \rfloor+1,n})}{2}.\end{equation}
Thanks to \textbf{ULD}, the following inequalities hold: $\p\left(\kappa(R_n)<n(\ell(\mu)-\epsilon)\right)\leq p_n(\epsilon)$ and $\p\left(\kappa(R_{\lfloor n/2 \rfloor})< \lfloor n/2 \rfloor(\ell(\mu)-\epsilon)\right)\leq p_{\lfloor n/2 \rfloor}(\epsilon)$. Moreover, since the $X_i$'s are iid, for each $n \in \N$, the distribution of $\kappa(R_{\lfloor n/2 \rfloor+1, n})$ is the same as $\kappa(R_{n-\lfloor n/2 \rfloor})$. Thus by applying again the \textbf{ULD} hypothesis, we get that $$\p\left(\kappa(R_{\lfloor n/2 \rfloor+1,n})> (n-\lfloor n/2 \rfloor)(\ell(\mu)+\epsilon)\right)\leq
p_{n-\lfloor n/2 \rfloor}(\epsilon)\leq 
p_{\lfloor n/2 \rfloor}(\epsilon).$$
 By \eqref{interm} this yields that   
\begin{equation}\label{b12}\p\left((R_n\cdot o |  R_{\lfloor n/2 \rfloor}\cdot  o)_o\leq  \lfloor n/2 \rfloor\ell(\mu)-n\epsilon\right)\leq p_n(\epsilon)+2p_{\lfloor n/2 \rfloor}(\epsilon).\end{equation}
A similar relation holds by replacing the couple $(R_n\cdot o | R_{\lfloor n/2 \rfloor}\cdot o)_o$ with the couple  $(R_n^{-1} \cdot o| R_{\lfloor n/2 \rfloor +1,n}^{-1} \cdot o)_o$.  Consequently 
estimate \eqref{max} holds  as soon as 
$\lfloor n/2\rfloor \ell(\mu) -n\epsilon > \epsilon n+2\delta$. This is for instance guaranteed if   $0<\epsilon\leq  \ell(\mu)/8$ and $n>2+8\delta/\ell(\mu)$. This shows  \eqref{max}. Since $p_n(\epsilon)$ is non-increasing, this concludes the proof of estimate (ii). 
 \end{proof}

We are finally ready to conclude 
 
\begin{proof}[Proof of Proposition \ref{prop.ULD.tits}] Consider two independent random walks $(R_n)_{n\geq 1}$ and $ (R'_n)_{n\geq 1}$ driven by $\mu$. 
We will check that  $R_n, R'_n, R_n^{-1}, {R'_n}^{-1}$ satisfy assumptions (i)--(iii) of Lemma \ref{lemma.ping-pong} with $D_n:=n\ell(\mu)/8+2\delta$, with probability tending to one. By (i) of Lemma \ref{lemma.estimate.rw} and the independence of the random variables $R_n$ and $R'_n$ we deduce that 
\begin{equation}\label{u1}\p\left( (R_n\cdot o | {R'_n} \cdot o)_o \geq n\ell(\mu)/8 \right)\leq  2 p_n(\ell(\mu)/8).
\end{equation}
Three other similar estimates hold by replacing the couple $(R_n, R'_n)$ with the couples $(R_n, {R'_n}^{-1})$, $(R_n^{-1}, R'_n)$, $(R_n^{-1}, {R'_n}^{-1})$. 
Also, by (ii) of Lemma \ref{lemma.estimate.rw}, we have for $n>2+8\delta/\ell(\mu) $, 
\begin{equation}\label{u2}\p\left( (R_n \cdot o | R_n^{- 1} \cdot o)_o \geq D_n\right)\leq 8 p_{\lfloor n/2 \rfloor}(\ell(\mu)/8), \end{equation} and similarly 
\begin{equation}\label{u3}\p\left( (R'_n \cdot o| {R'_n}^{- 1} \cdot o)_o \geq D_n\right)\leq 8 p_{\lfloor n/2 \rfloor}(\ell(\mu)/8). \end{equation}
Finally, using the hypothesis \textbf{ULD}, we have for every $\epsilon>0$ and $n\in \N$  that $$\p \left( \kappa(R_n)\in [n \ell(\mu) - n\epsilon, n \ell(\mu)+n\epsilon]\right)\geq 1-p_n(\epsilon),$$
and similarly for $\kappa(R'_n)$.
Hence, with probability $\geq 1-p_n(\epsilon)$,
\begin{equation}\label{u4}0<\frac{1}{2}\max\{\kappa(R_n), \kappa(R'_n)\}<\min\{\kappa(R_n), \kappa(R'_n)\} -D_n-\delta\end{equation}
as soon as $$ (n \ell(\mu)+n\epsilon)/2< n\ell(\mu)-n\epsilon -D_n-\delta,$$
and in particular as soon as $0<\epsilon\leq  \ell(\mu)/8$ and $n>16\delta/\ell(\mu)$.

Finally, specializing to $\epsilon=\ell(\mu)/8$,  we conclude that the seven estimates \eqref{u4}, \eqref{u3}, \eqref{u2} \eqref{u1}, and the three other inequalities similar to \eqref{u1} hold  simultaneously in an event of $\p$-probability 
\begin{equation}\label{eq.tits.end}
>1-\left(8p_n(\ell(\mu)/8)+16p_{\lfloor n/2 \rfloor}(\ell(\mu)/8) + p_n(\ell(\mu)/8)\right)
>1-25 p_{\lfloor n/2 \rfloor}(\ell(\mu)/8),
\end{equation}
provided that $n>2+16\delta/\ell(\mu)$. In other words, for every such $n \in \N$, with probability at least the amount given by \eqref{eq.tits.end}, the elements $R_n$ and $R_n'$ satisfy the hypotheses of Lemma \ref{lemma.ping-pong} and this finishes the proof of Proposition \ref{prop.ULD.tits}. 
\end{proof}

\subsection{A lower bound for the drift}\label{subsec.lower.bound}
In view of Theorem \ref{thm.main.text} and Proposition \ref{prop.ULD.tits}, the only remaining ingredient for the proof of Theorem \ref{thm.proba.tits} is a control of how small the drift $\ell(\mu)$ of the random walk can be. The harmonic analytic approach of \S \ref{sec.explicit} allows one to deduce a lower bound on the drift as we now discuss. This sort of result should be known to the experts. Results of similar flavor appear in the works \cite{guivarch.sur.la.loi,  margulis.book,nevo,virtser}.

Given $R \geq 0$, as before, we set $B_R=\{g \in G \; | \; d(go,o) \leq R\}$. Since $M$ is proper, the sets $B_R$ defined above are compact and they have non-empty interior if $R>0$. In particular, there exists $K_0 \in \N$ and $g_1, \ldots, g_{K_0} \in B_{6D_1}$ such that $B_{6D_1} \subseteq \cup_{i=1}^{K_0}g_i B_{D_1}$, where, as before, $D_1 \in \R$ denotes the constant $\max \{D_0,1\}$ and $D_0:=2\textrm{diam}(G\backslash M)$. For convenience later on, we choose $K_0$ to be the smallest such integer. An elementary covering argument allows one to get the bound $K_0\leq \frac{\mu_G(B_{\frac{13}{2}D_1 })}{\mu_G(B_{\frac{1}{2}D_1})}$.


\begin{proposition}\label{prop.drift.lower.bound}
Let $(M,d)$ be a proper geodesic metric space such that $G=\Isom(M)$ acts cocompactly on $M$. Then, for every probability measure $\mu$ on $G$ with finite first order moment, the drift $\ell(\mu) \in \R$ satisfies
\begin{equation}\label{eq.lyap.lower}
\ell(\mu) \geq \frac{2D_1}{\ln K_0} \sup_{r \in [0,1)} \frac{1}{1-r} \ln \frac{1}{\|\lambda_G(\mu_{r,\lazy})\|_2}.
\end{equation}
\end{proposition}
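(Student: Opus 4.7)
My plan is to split the proof into a reduction followed by a tail estimate and an iteration across scales.

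First, I would reduce to the non-lazy case. The identity $\ell(\mu_{r,\lazy})=(1-r)\ell(\mu)$ follows by observing that the lazy walk at time $n$ is distributed as the $\mu$-walk performed at the binomial time $N_n\sim\mathrm{Bin}(n,1-r)$; since $N_n/n\to 1-r$ almost surely by the strong law and $\kappa(R_{N_n})/N_n\to\ell(\mu)$ almost surely by Kingman's ergodic theorem, one deduces $\kappa(R_n^{(\mu_{r,\lazy})})/n\to(1-r)\ell(\mu)$. Hence it suffices to show that for any probability measure $\eta$ on $G$ with finite first moment,
\[
\ell(\eta) \;\geq\; \frac{2D_1}{\ln K_0}\,\ln\frac{1}{\|\lambda_G(\eta)\|_2}.
\]
Applying this to $\eta=\mu_{r,\lazy}$ and dividing by $1-r$ yields the proposition, and taking the supremum over $r$ completes the reduction.

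Next, I would set $q:=\|\lambda_G(\eta)\|_2$ and assume $q<1$ (otherwise the right-hand side vanishes and the claim is trivial since $\ell(\eta)\geq 0$). The first application of Lemma \ref{lemma.bq.5.2} is with $R=3D_1$: the given covering $B_{6D_1}\subseteq\bigcup_{i=1}^{K_0}g_i B_{D_1}$ combined with the monotonicity $\mu_G(B_{3D_1})\geq \mu_G(B_{D_1})$ yields $A_0(3D_1)\leq \sqrt{K_0}$, and $3D_1-D_0\geq 2D_1$ because $D_0\leq D_1$; consequently
\[
\p(\kappa(R_n)\leq 2D_1) \;\leq\; \sqrt{K_0}\,q^n.
\]

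Finally, I would convert this single-scale tail bound into a linear-in-$n$ lower bound on $\mathbb{E}[\kappa(R_n)]$ by chaining the cover across scales. The crucial ingredient—which is the main technical obstacle—is a multi-scale doubling estimate of the form $A_0(2kD_1+D_0)\leq K_0^{k/2}$ for every integer $k\geq 1$; this is obtained by inductively chaining the single-scale cover $B_{6D_1}\subseteq\bigcup_{i=1}^{K_0}g_i B_{D_1}$ along group translates, using the cocompactness of $G\curvearrowright M$ to control volumes of balls at scales $2kD_1$. Plugging this into Lemma \ref{lemma.bq.5.2} gives $\p(\kappa(R_n)\leq 2kD_1)\leq K_0^{k/2}\,q^n$. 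Choosing $n=n(k)$ to be the smallest integer with $K_0^{k/2}q^{n(k)}\leq 1/2$, so that $n(k)\sim\tfrac{k\ln K_0}{2\ln(1/q)}$ as $k\to\infty$, one obtains $\mathbb{E}[\kappa(R_{n(k)})]\geq kD_1$ and hence
\[
\mathbb{E}[\kappa(R_{n(k)})]/n(k) \;\geq\; \frac{kD_1}{n(k)} \;\xrightarrow[k\to\infty]{}\; \frac{2D_1\ln(1/q)}{\ln K_0}.
\]
Since Kingman's subadditive ergodic theorem gives $\mathbb{E}[\kappa(R_n)]/n\to\ell(\eta)$, the same limit is obtained along the subsequence $n(k)$, yielding the desired inequality. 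The hard part will be the iterated doubling estimate, whose proof must carefully exploit the cocompactness to control volumes well beyond the single scale $D_1$ at which the cover is given.
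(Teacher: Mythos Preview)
Your overall strategy coincides with the paper's: reduce to the non-lazy case via $\ell(\mu_{r,\lazy})=(1-r)\ell(\mu)$, combine Lemma~\ref{lemma.bq.5.2} with an iterated doubling estimate for $\mu_G$-volumes of balls, and conclude via Kingman. However, there are two compensating quantitative errors in your argument that you should be aware of.

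First, your claimed multi-scale bound $A_0(2kD_1+D_0)\leq K_0^{k/2}$ is off by a factor of $2$ in the exponent. The paper establishes (via a maximal $2D_1$-separated set argument, using that $(G,d_G)$ is large-scale geodesic by cocompactness) that $\mu_G(B_{r+D_1})/\mu_G(B_r)\leq K_0$ for every $r\geq D_1$. Iterating this over the gap $R=2kD_1+D_0$ between the radii $R$ and $2R$ yields $A_0(R)^2=\mu_G(B_{2R})/\mu_G(B_R)\leq K_0^{R/D_1}\approx K_0^{2k}$, hence $A_0(2kD_1+D_0)\lesssim K_0^{k}$, not $K_0^{k/2}$. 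I do not see how ``chaining the single-scale cover $B_{6D_1}\subset\bigcup g_iB_{D_1}$'' could beat this.

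Second, your passage through the expectation with threshold probability $1/2$ loses another factor of $2$: from $\p(\kappa(R_{n(k)})\leq 2kD_1)\leq 1/2$ you only get $\mathbb{E}[\kappa(R_{n(k)})]\geq kD_1$, half of what one would want. These two errors happen to cancel, which is why your final constant comes out correct, but the proof as written is not sound. The paper avoids both issues by working directly with the probability: with the correct doubling one gets $\p(\kappa(R_n)\leq \alpha n)\leq K_0^{\alpha n/(2D_1)}q^n$, which decays exponentially precisely when $\alpha<\tfrac{2D_1}{\ln K_0}\ln(1/q)$; Kingman then gives $\ell(\mu)\geq \alpha$ for all such $\alpha$. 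Alternatively, your expectation route can be salvaged by taking the threshold probability to be an arbitrary $\epsilon>0$ and letting $\epsilon\to 0$ at the end.
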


\begin{remark}\label{rk.lazy.again}
The reason why we also include $\mu_{r,\lazy}$ in the conclusion of the previous Proposition \ref{prop.drift.lower.bound} is that, as discussed in Remark \ref{rk.lazy.discussion}, when $\mu$ is non-symmetric, it might happen that the closed group $\overline{\Gamma}_\mu$ generated by the support of $\mu$ is non-amenable whereas $\|\lambda_G(\mu)\|_2=1$. However, in this case, for every $r>0$, we have $\|\lambda_G(\mu_{r,\lazy})\|_2<1$. Therefore, whenever $\overline{\Gamma}_\mu$ is non-amenable the lower bound provided by the proposition is  strictly positive and it depends only on $D_1,K_0$, and  $\|\lambda_G(\mu_{1/2,\lazy})\|$.
\end{remark}

\begin{proof}
We first prove that  
$$
\ell(\mu) \geq\frac{2D_1}{\ln K_0} \ln \frac{1}{\|\lambda_G(\mu)\|_2}.
$$
The proposition  then follows by applying the above for  each  $\mu_{r,\lazy}$ and noting that $\ell(\mu_{r,\lazy})=(1-r)\ell(\mu)$.
A straightforward modification of the proof of Lemma \ref{lemma.bq.5.2} shows that for every $R>0$ and $n \in \N$, we have
\begin{equation}\label{eq.bq.5.2}
\mathbb{P}(d(R_n o,o) \leq R) \leq \left(\frac{\mu_G(B_{2R})}{\mu_G(B_{R})}\right)^{1/2} \|\lambda_G(\mu)\|_2^n,
\end{equation}
where $\mu_G$ is a Haar measure on $G$.
Indeed, the additional term $D_0$ in the left hand side of \eqref{estime.1} disappears since here we take $m=m'=o$.

We now claim that for every $r \geq D_1$,
\begin{equation}\label{eq.macornu}
    \frac{\mu_G(B_{r+D_1})}{\mu_G(B_{r})} \leq K_0,
\end{equation}
where $K_0 \in \N$ is the constant defined before the statement of Proposition \ref{prop.drift.lower.bound}.
Indeed, given $r \geq D_1$, let $\{\gamma_1,\ldots,\gamma_T\}$ be a maximal $2D_1$-separated set contained in $B_{r-D_1}$ with respect to the left-invariant pseudo-metric $d_G$ defined as $d_G(g,h)=d(go,ho)$ for every $g,h \in G$. 
%
Then the collection $\gamma_iB_{D_1}$ for $i=1,\ldots,T$ consists of disjoint compact subsets of $B_r$ of same Haar measure as $B_{D_1}$ so that we have $\mu_G(B_r) \geq T \mu_G(B_{D_1})$.  On the other hand, since $G$ acts co-compactly on $M$ and $M$ 
is geodesic, it is not hard to see that 
every element in $B_{r+D_1}$ is $2D_1$-close for the pseudo-metric $d_G$ to an element of $B_r$ (in fact, $(G,d_G)$ is a large-scale geodesic space in the sense of \cite[Definition 3.B.1]{cornulier-harpe}). Hence the collection $\gamma_i B_{6D_1}$ for $i=1,\ldots,T$ is a covering of $B_{r+D_1}$ by compacts having the same Haar measure as $B_{6D_1}$ and therefore we have $\mu_G(B_{r+D_1}) \leq T \mu_G(B_{6D_1})$. Therefore we deduce $\frac{\mu_G(B_{r+D_1})}{\mu_G(B_r)} \leq \frac{\mu_G(B_{6D_1})}{\mu_G(B_{D_1})} \leq K_0$ proving \eqref{eq.macornu}. 

Now, by using \eqref{eq.macornu} iteratively and plugging it in \eqref{eq.bq.5.2}, we deduce that for every $\alpha<\frac{2D_1}{\ln K_0}\ln \frac{1}{\|\lambda_G(\mu)\|_2}$, we have
\begin{equation}\label{stronger}
\limsup_{n \to \infty} \mathbb{P}(d(R_no,o) \leq \alpha n)^{\frac{1}{n}}<1.
\end{equation}
The result follows in view of the Kingman's subadditive ergodic theorem.
\end{proof}

\begin{remark}
In fact, the estimate \eqref{stronger} above provides a lower bound $L_0>0$ for a region of type $[0,L_0)$ on which the large deviation rate function of the process $(\frac{1}{n}\kappa(R_n))_{n \geq 1}$ is positive. Such a lower bound is, a priori, stronger than a lower bound for the drift $\ell(\mu)$. However, the recent works \cite{BMSS} under finite exponential moment and \cite{gouezel.first.moment} under finite first order moment assumptions, identify the drift $\ell(\mu)$ as the smallest real $r$ such that the rate function $I$ is positive on $[0,r)$. 

On the other hand, the fact that Proposition \ref{prop.drift.lower.bound} provides an explicit region of positivity of $I$ allows, for example, to obtain explicit constants in \cite[Theorem 1.2]{maher-tiozzo} under our assumptions.
\end{remark}

\subsection{Proof of Theorem \ref{thm.proba.tits}}
We denote by $D(.,.)$ the positive function given by Theorem \ref{thm.main.text}. The hypotheses of Theorem \ref{thm.proba.tits} allow us to apply Theorem \ref{thm.main.text} to deduce that for every $r \in [0,1)$ the probability measure $\mu$ satisfies the \textbf{ULD} hypothesis with 
\begin{equation}\label{eq.getuld1}
p_n(\epsilon)=2\exp\left(\frac{-n\epsilon^2}{\kappa_{S}^2D(\kappa_{S},\|\lambda_G(\mu_{r,\lazy})\|_2)}\right).
\end{equation}

Applying Proposition \ref{prop.ULD.tits}, we deduce that for every integer
\begin{equation}\label{eq.range}
n > 2+ \frac{16\delta}{\ell(\mu)},    
\end{equation}
we have
$$(\mu^{\ast n}\otimes \mu^{\ast n} )\left\{(\gamma_1, \gamma_2)\,|\, \langle \gamma_1, \gamma_2 \rangle \, \textrm{is free} \right\}> 1-25p_{\lfloor n/2 \rfloor }(\ell(\mu)/8)>1-25p_{n/4}(\ell(\mu)/8).$$
Therefore, using  \eqref{eq.getuld1} and  the bound provided by Proposition \ref{prop.drift.lower.bound}, setting $\lambda_r=\|\lambda_G(\mu_{r,\lazy})\|_2$ we obtain that for every $r\in [0,1)$ and for every $n> 2+\frac{8 \delta (1-r) \ln K_0 }{D_1 \ln \frac{1}{\lambda_r}}$,  two independent random walks generate a free subgroup with probability 
\begin{equation}\label{eq.the.real.tits.bound}
>
1-50\exp\left(\frac{-nD_1^2 (\ln \lambda_r)^2 (1-\sqrt{\lambda_r})^4}{2^{11} (\ln K_0)^2 (1-r)^2  \kappa_S^2 (16 \ln^+(\kappa_S)+\frac{8A_0}{3}+33)^2}\right).
\end{equation}
Specifying to $r=1/2$, the result follows by taking the function $n_0(\cdot)$
and $T(\cdot, \cdot)$ as
$$T(\kappa, \lambda)= \tilde{B}_M \frac{(\ln\lambda)^2 (1-\sqrt{\lambda})^4}{\kappa^2(\ln^+\kappa +\tilde{A}_M)^2},
$$
$$n_0(\lambda)=2-\tilde{C}_M \frac{1}{\ln \lambda},$$
where the constants $\tilde{A}_M,\tilde{B}_M,\tilde{C}_M$ are given by
\begin{equation}\label{eq.s1}
  \tilde{A}_M=A_0/6 + 33/16, \quad \tilde{B}_M=\frac{D_1^2}{2^{17} (\ln K_0)^2}, \quad \tilde{C}_M=\frac{4\delta \ln K_0}{D_1}, \end{equation}
and for clarity, we recall that\\
\textbullet ${}$ (\S \ref{subsec.lower.bound}) $D_1=\max \{1,2\diam(G\backslash M)\}$,\\ 
\textbullet ${}$ (\S \ref{subsec.lower.bound}) $K_0$ satisfies $K_0\leq \frac{\mu_G(B_{13D_1/2})}{\mu_G(B_{D_1/2})}$, and\\ 
\textbullet ${}$ (\S \ref{subsec.main}) $A_0$ is the doubling constant given in Remark \ref{rk.explicit.C}.

Finally, the expression in Remark \ref{rk.explicit.tits} follows by taking
\begin{equation}\label{eq.explicit.guy}
A_M=\max\left\{\tilde{C}_M,\frac{\tilde{A}_M^2}{\tilde{B}_M}\right\}.
\end{equation}

\begin{remark}\label{rk.last.rk}
The explicit bounds on the probability mentioned in \S \ref{subsub.tits.conseq} for hyperbolic groups and rank-one linear groups are obtained by plugging the upper bounds \eqref{eq.explicit.bound.koubi} and \eqref{eq.explicit.bound.manu} on $\lambda_r$ into
\eqref{eq.the.real.tits.bound} in the proof above. Similarly, for the range of validity of $n \in \N$, one can plug \eqref{eq.explicit.bound.koubi} and \eqref{eq.explicit.bound.manu} in \eqref{eq.lyap.lower} to get an explicit lower bound for $\ell(\mu)$ which then provides an upper bound for the right-hand-side of \eqref{eq.range}.
\end{remark}


\begin{thebibliography}{10}

\bibitem{aoun.tits}
R.~Aoun.
\newblock Random subgroups of linear groups are free.
\newblock {\em Duke Mathematical Journal}, 160(1):117--173, 2011.

\bibitem{ballmann-gromov-schroeder}
W.~Ballmann, M.~Gromov, and V.~Schroeder.
\newblock {\em Manifolds of Nonpositive Curvature}, volume~61 of {\em Progress
  in Mathematics}.
\newblock Springer, New York, 1985.

\bibitem{BQ.CLT.linear}
Y.~Benoist and J.-F. Quint.
\newblock Central limit theorem for linear groups.
\newblock {\em Ann. Probab.}, 44(2):1308--1340, 2016.

\bibitem{BQ.CLT.hyperbolic}
Y.~Benoist and J.-F. Quint.
\newblock Central limit theorem on hyperbolic groups.
\newblock {\em Izv. Ross. Akad. Nauk Ser. Mat.}, 80(1):5--26, 2016.

\bibitem{berg-christensen}
C.~Berg and J.~P.~R. Christensen.
\newblock Sur la norme des op\'{e}rateurs de convolution.
\newblock {\em Invent. Math.}, 23:173--178, 1974.

\bibitem{besson-courtois-gallot-sambusetti}
G.~Besson, G.~Courtois, S.~Gallot, and A.~Sambusetti.
\newblock Curvature-free margulis lemma for gromov-hyperbolic spaces.
\newblock {\em arXiv preprint arXiv:1712.08386}, 2017.

\bibitem{BMSS}
A.~Boulanger, P.~Mathieu, C.~Sert, and A.~Sisto.
\newblock Large deviations for random walks on hyperbolic spaces.
\newblock {\em preprint, arXiv:2008.02709}, 2020.

\bibitem{boyer}
J.-B. Boyer.
\newblock On the affine random walk on the torus.
\newblock {\em arXiv preprint arXiv:1702.08387}, 2017.

\bibitem{breuillard.strong.tits}
E.~Breuillard.
\newblock A strong {T}its alternative.
\newblock {\em \url{https://arxiv.org/abs/0804.1395}}, 2008.

\bibitem{breuillard.height}
E.~Breuillard.
\newblock A height gap theorem for finite subsets of {${\rm
  GL}_d(\overline{\Bbb Q})$} and nonamenable subgroups.
\newblock {\em Ann. of Math. (2)}, 174(2):1057--1110, 2011.

\bibitem{breuillard.sl2}
E.~Breuillard.
\newblock Heights on {${\rm SL}_2$} and free subgroups.
\newblock In {\em Geometry, rigidity, and group actions}, Chicago Lectures in
  Math., pages 455--493. Univ. Chicago Press, Chicago, IL, 2011.

\bibitem{breuillard-fujiwara}
E.~Breuillard and K.~Fujiwara.
\newblock On the joint spectral radius for isometries of non-positively curved
  spaces and uniform growth.
\newblock {\em To appear in Annales de l'Institut Fourier}, arXiv:1804.00748.

\bibitem{manu2003}
E.~Breuillard and T.~Gelander.
\newblock On dense free subgroups of lie groups.
\newblock {\em Journal of Algebra}, 261(2):448--467, 2003.

\bibitem{CCMT}
P.-E. Caprace, Y.~Cornulier, N.~Monod, and R.~Tessera.
\newblock Amenable hyperbolic groups.
\newblock {\em J. Eur. Math. Soc. (JEMS)}, 17(11):2903--2947, 2015.

\bibitem{cavallucci-sambusetti}
N.~Cavallucci and A.~Sambusetti.
\newblock Discrete groups of packed, non-positively curved, gromov hyperbolic
  metric spaces.
\newblock {\em arXiv preprint arXiv:2102.09829}, 2021.

\bibitem{hyperbolic-book}
M.~Coornaert, T.~Delzant, and A.~Papadopoulos.
\newblock {\em G\'{e}om\'{e}trie et th\'{e}orie des groupes}, volume 1441 of
  {\em Lecture Notes in Mathematics}.
\newblock Springer-Verlag, Berlin, 1990.
\newblock Les groupes hyperboliques de Gromov. [Gromov hyperbolic groups], With
  an English summary.

\bibitem{cornulier-harpe}
Y.~Cornulier and P.~de~La~Harpe.
\newblock Metric geometry of locally compact groups.
\newblock {\em arXiv preprint arXiv:1403.3796}, 2014.

\bibitem{gouezel-dedecker}
J.~Dedecker and S.~Gou\"{e}zel.
\newblock Subgaussian concentration inequalities for geometrically ergodic
  {M}arkov chains.
\newblock {\em Electron. Commun. Probab.}, 20:1--12, 2015.

\bibitem{duarte-klein.book}
P.~Duarte and S.~Klein.
\newblock {\em Lyapunov exponents of linear cocycles}, volume~3 of {\em
  Atlantis Studies in Dynamical Systems}.
\newblock Atlantis Press, Paris, 2016.
\newblock Continuity via large deviations.

\bibitem{dynkin-malyutov}
E.~B. Dynkin and M.~B. Malyutov.
\newblock Random walk on groups with a finite number of generators.
\newblock In {\em Doklady Akademii Nauk}, volume 137, pages 1042--1045. Russian
  Academy of Sciences, 1961.

\bibitem{erschler-kaimanovich}
A.~G. Erschler and V.~A. Ka\u{\i}manovich.
\newblock Continuity of asymptotic characteristics for random walks on
  hyperbolic groups.
\newblock {\em Funktsional. Anal. i Prilozhen.}, 47(2):84--89, 2013.

\bibitem{furstenberg.poisson}
H.~Furstenberg.
\newblock A poisson formula for semi-simple lie groups.
\newblock {\em Annals of Mathematics}, pages 335--386, 1963.

\bibitem{Furstenberg.boundary}
H.~Furstenberg.
\newblock Boundary theory and stochastic processes on homogeneous spaces.
\newblock In {\em Harmonic analysis on homogeneous spaces ({P}roc. {S}ympos.
  {P}ure {M}ath., {V}ol. {XXVI}, {W}illiams {C}oll., {W}illiamstown, {M}ass.,
  1972)}, pages 193--229, 1973.

\bibitem{furstenberg-kesten}
H.~Furstenberg and H.~Kesten.
\newblock Products of random matrices.
\newblock {\em Ann. Math. Statist.}, 31:457--469, 1960.

\bibitem{furstenberg-kifer}
H.~Furstenberg and Y.~Kifer.
\newblock Random matrix products and measures on projective spaces.
\newblock {\em Israel J. Math.}, 46(1-2):12--32, 1983.

\bibitem{Gao.Kechris}
S.~Gao and A.~S. Kechris.
\newblock On the classification of {P}olish metric spaces up to isometry.
\newblock {\em Mem. Amer. Math. Soc.}, 161(766):viii+78, 2003.

\bibitem{GdH}
{\'E}.~Ghys and P.~De~La~Harpe.
\newblock Espaces m{\'e}triques hyperboliques.
\newblock In {\em Sur les groupes hyperboliques d’apr{\`e}s Mikhael Gromov},
  pages 27--45. Springer, 1990.

\bibitem{Gillman}
D.~Gillman.
\newblock A {C}hernoff bound for random walks on expander graphs.
\newblock {\em SIAM J. Comput.}, 27(4):1203--1220, 1998.

\bibitem{GMO}
R.~Gilman, A.~Miasnikov, and D.~Osin.
\newblock Exponentially generic subsets of groups.
\newblock {\em Illinois J. Math.}, 54(1):371--388, 2010.

\bibitem{gouezel.local}
S.~Gou{\"e}zel.
\newblock Local limit theorem for symmetric random walks in gromov-hyperbolic
  groups.
\newblock {\em Journal of the American Mathematical Society}, 27(3):893--928,
  2014.

\bibitem{gouezel.cts}
S.~Gou\"{e}zel.
\newblock Analyticity of the entropy and the escape rate of random walks in
  hyperbolic groups.
\newblock {\em Discrete Anal.}, 7:1--37, 2017.

\bibitem{gouezel.first.moment}
S.~Gou{\"e}zel.
\newblock Exponential bounds for random walks on hyperbolic spaces without
  moment conditions.
\newblock {\em arXiv preprint arXiv:2102.01408}, 2021.

\bibitem{gouezel-matheus-maucourant}
S.~Gou\"{e}zel, F.~Math\'{e}us, and F.~Maucourant.
\newblock Entropy and drift in word hyperbolic groups.
\newblock {\em Invent. Math.}, 211(3):1201--1255, 2018.

\bibitem{gromov}
M.~Gromov.
\newblock Hyperbolic groups.
\newblock In {\em Essays in group theory}, volume~8 of {\em Math. Sci. Res.
  Inst. Publ.}, pages 75--263. Springer, New York, 1987.

\bibitem{guivarch.sur.la.loi}
Y.~Guivarc'h.
\newblock Sur la loi des grands nombres et le rayon spectral d'une marche
  al\'{e}atoire.
\newblock In {\em Conference on {R}andom {W}alks ({K}leebach, 1979)
  ({F}rench)}, volume~74 of {\em Ast\'{e}risque}, pages 47--98, 3. Soc. Math.
  France, Paris, 1980.

\bibitem{Guivarch3}
Y.~Guivarc'h.
\newblock Produits de matrices al\'eatoires et applications aux propri\'et\'es
  g\'eom\'etriques des sous-groupes du groupe lin\'eaire.
\newblock {\em Ergodic Theory Dynam. Systems}, 10(3):483--512, 1990.

\bibitem{guivarch-raugi}
Y.~Guivarc'h and A.~Raugi.
\newblock Fronti\`ere de {F}urstenberg, propri\'{e}t\'{e}s de contraction et
  th\'{e}or\`emes de convergence.
\newblock {\em Z. Wahrsch. Verw. Gebiete}, 69(2):187--242, 1985.

\bibitem{hennion}
H.~Hennion.
\newblock Loi des grands nombres et perturbations pour des produits
  r\'{e}ductibles de matrices al\'{e}atoires ind\'{e}pendantes.
\newblock {\em Z. Wahrsch. Verw. Gebiete}, 67(3):265--278, 1984.

\bibitem{horbez}
C.~Horbez.
\newblock Central limit theorems for mapping class groups and out (fn).
\newblock {\em Geometry $\&$ Topology}, 22(1):105--156, 2017.

\bibitem{kaimanovich.oseledets}
V.~A. Kaimanovich.
\newblock Lyapunov exponents, symmetric spaces, and a multiplicative ergodic
  theorem for semisimple lie groups.
\newblock {\em Journal of Soviet Mathematics}, 47(2):2387--2398, 1989.

\bibitem{kaimanovich-vershik}
V.~A. Ka\u{\i}manovich and A.~M. Vershik.
\newblock Random walks on discrete groups: boundary and entropy.
\newblock {\em Ann. Probab.}, 11(3):457--490, 1983.

\bibitem{kesten.symmetric}
H.~Kesten.
\newblock Symmetric random walks on groups.
\newblock {\em Transactions of the American Mathematical Society},
  92(2):336--354, 1959.

\bibitem{koubi}
M.~Koubi.
\newblock Croissance uniforme dans les groupes hyperboliques.
\newblock {\em Ann. Inst. Fourier (Grenoble)}, 48(5):1441--1453, 1998.

\bibitem{kuranishi}
M.~Kuranishi.
\newblock On everywhere dense imbedding of free groups in lie groups.
\newblock {\em Nagoya Mathematical Journal}, 2:63--71, 1951.

\bibitem{page}
E.~Le~Page.
\newblock Th\'{e}or\`emes limites pour les produits de matrices al\'{e}atoires.
\newblock In {\em Probability measures on groups ({O}berwolfach, 1981)}, volume
  928 of {\em Lecture Notes in Math.}, pages 258--303. Springer, Berlin-New
  York, 1982.

\bibitem{Ledrappier.cts}
F.~Ledrappier.
\newblock Regularity of the entropy for random walks on hyperbolic groups.
\newblock {\em Ann. Probab.}, 41(5):3582--3605, 2013.

\bibitem{liu-watbled}
Q.~Liu and F.~Watbled.
\newblock Exponential inequalities for martingales and asymptotic properties of
  the free energy of directed polymers in a random environment.
\newblock {\em Stochastic Process. Appl.}, 119(10):3101--3132, 2009.

\bibitem{maher.jlms}
J.~Maher.
\newblock Exponential decay in the mapping class group.
\newblock {\em Journal of the London Mathematical Society}, 86(2):366--386,
  2012.

\bibitem{maher-tiozzo}
J.~Maher and G.~Tiozzo.
\newblock Random walks on weakly hyperbolic groups.
\newblock {\em Journal f{\"u}r die reine und angewandte Mathematik},
  2018(742):187--239, 2018.

\bibitem{margulis.book}
G.~A. Margulis.
\newblock {\em Discrete subgroups of semisimple {L}ie groups}, volume~17 of
  {\em Ergebnisse der Mathematik und ihrer Grenzgebiete (3)}.
\newblock Springer-Verlag, Berlin, 1991.

\bibitem{mathieu-sisto}
P.~Mathieu and A.~Sisto.
\newblock Deviation inequalities for random walks.
\newblock {\em Duke Mathematical Journal}, 169(5):961--1036, 2020.

\bibitem{McDiarmid}
C.~McDiarmid.
\newblock On the method of bounded differences.
\newblock In {\em Surveys in combinatorics, 1989 ({N}orwich, 1989)}, volume 141
  of {\em London Math. Soc. Lecture Note Ser.}, pages 148--188. Cambridge Univ.
  Press, Cambridge, 1989.

\bibitem{nevo}
A.~Nevo.
\newblock The spectral theory of amenable actions and invariants of discrete
  groups.
\newblock {\em Geom. Dedicata}, 100:187--218, 2003.

\bibitem{oseledets}
V.~I. Oseledec.
\newblock A multiplicative ergodic theorem. {C}haracteristic {L}japunov,
  exponents of dynamical systems.
\newblock {\em Trudy Moskov. Mat. Ob\v{s}\v{c}.}, 19:179--210, 1968.

\bibitem{sert.matrix.LDP}
C.~Sert.
\newblock Large deviation principle for random matrix products.
\newblock {\em The Annals of Probability}, 47(3):1335--1377, 2019.

\bibitem{tanaka.spectrum}
R.~Tanaka.
\newblock Hausdorff spectrum of harmonic measure.
\newblock {\em Ergodic Theory and Dynamical Systems}, 37(1):277--307, 2017.

\bibitem{taylor.tiozzo}
S.~J. Taylor and G.~Tiozzo.
\newblock Random extensions of free groups and surface groups are hyperbolic.
\newblock {\em International Mathematics Research Notices}, 2016(1):294--310,
  2016.

\bibitem{vaisala}
J.~V{\"a}is{\"a}l{\"a}.
\newblock Gromov hyperbolic spaces.
\newblock {\em Expositiones Mathematicae}, 23(3):187--231, 2005.

\bibitem{virtser}
A.~D. Virtser.
\newblock On products of random matrices and operators.
\newblock {\em Theory of Probability $\&$ Its Applications}, 24(2):367--377,
  1980.

\bibitem{xiao-hui-grama.precise.ld}
H.~Xiao, I.~Grama, and Q.~Liu.
\newblock Precise large deviation asymptotics for products of random matrices.
\newblock {\em Stochastic Process. Appl.}, 130(9):5213--5242, 2020.

\end{thebibliography}

\end{document}